\theoremstyle{plain}
\newtheorem{theorem}{Theorem}[section]
\newtheorem{corollary}[theorem]{Corollary}
\newtheorem{proposition}[theorem]{Proposition}
\newtheorem{lemma}[theorem]{Lemma}
\newtheorem*{claim}{Claim}
\theoremstyle{remark}
\newtheorem*{remark}{Remark}
\theoremstyle{definition}
\newtheorem{definition}{Definition}[section]
\newcommand{\eps}{\varepsilon}
\newcommand{\Z}{\mathbb{Z}}
\newcommand{\R}{\mathbb{R}}
\newcommand{\PP}{\mathbf{P}}
\renewcommand{\P}[1]{\PP \left [ #1 \right ]}
\newcommand{\Pp}[1]{\PP_p \left [ #1 \right ]}
\newcommand{\lr}[1][]{\overset{\:#1\:}\longleftrightarrow}
\newcommand{\nlr}[1][]{\overset{\:#1\:}{\longleftrightarrow}\!\!\!\!\!\!\!\!\!\arrownot
  \ \, \quad}
\newcommand{\cal}{\mathcal}
\newcommand{\ol}{\overline}
\renewcommand{\bar}{\overline}
\newcommand\blfootnote[1]{%
  \begingroup
  \renewcommand\thefootnote{}\footnote{#1}%
  \addtocounter{footnote}{-1}%
  \endgroup
}
\begin{document}
\title{Critical percolation and the Minimal spanning tree in slabs}
\author{Charles M. Newman}
\author{Vincent Tassion}
\author{Wei Wu}
\address[Charles Newman]{Courant Institute of Mathematical Sciences, New York University,
251 Mercer st, New York, NY 10012, USA \newline
\& NYU-ECNU Institute of Mathematical Sciences at NYU Shanghai, 3663 Zhongshan Road North, Shanghai 200062,
China.}
\address[Vincent Tassion]{D\'epartement de Math\'ematiques Universit\'e de Geneve, Geneve, Switzerland
}
\address[Wei Wu]{Courant Institute of Mathematical Sciences, New York University,
251 Mercer st, New York, NY 10012, USA \newline
\& NYU-ECNU Institute of Mathematical Sciences at NYU Shanghai, 3663 Zhongshan Road North, Shanghai 200062,
China.}
\date{\today }
\maketitle

\begin{abstract}
  The minimal spanning forest on $\Z^d$ is known to consist of a single tree for
  $d \leq 2$ and is conjectured to consist of infinitely many trees for large $d$. In this
  paper, we prove that there is a single tree for quasi-planar graphs such as
  $\Z^2\times {\{0,\ldots,k\}}^{d-2}$.
  Our method relies on generalizations of the ``Gluing Lemma'' of \cite{DST}. A
  related result is that critical Bernoulli percolation on a slab satisfies the
  box-crossing property. Its proof is based on a new Russo-Seymour-Welsh type theorem for
  quasi-planar graphs. Thus, at criticality, the probability of an open path
  from $0$ of diameter $n$ decays polynomially in $n$. This strengthens the
  result of \cite{DST}, where the absence of an infinite cluster at criticality
  was first established.
\end{abstract}

\section{Introduction}

 There are two standard models of
random spanning trees on finite graphs: the {\it uniform\/} spanning tree and
the {\it minimal\/} spanning tree. One can define these, by taking a limit, on
infinite graphs, such as $\Z^d$ with nearest-neighbor edges, but then the single
finite spanning tree may become a forest of many disjoint trees. Because the
uniform spanning tree is closely related to random walks and potential theory
(\cite{Wil}, see also \cite{BLPS}), it is known \cite{Pem} that the critical
dimension is exactly $d_{c}=4$, only above which is there more than a single
tree.

In the case of the minimal spanning tree, where random walks and potential theory
are replaced by invasion percolation and critical Bernoulli percolation,
very little is known rigorously. As we will discuss in more detail below,
it is known that there is a single tree in $\Z^2$ (\cite{CCN}, see also \cite{AM}) and there are conjectures that for large $d$
there are infinitely many trees. The main purpose of this paper is to make progress
toward a proof that at least for some low dimensions above $d=2$, there is a single
tree by showing that this is the case for the approximation of, say, $\Z^3$ by a thick
slab $\Z^2\times \{0,\ldots,k\}$. In the process, we also obtain a new result
for critical percolation on such slabs, where it was only recently proved that
there is no infinite cluster \cite{DST}; the new result is inverse power law
decay for the probability of large diameter finite clusters at criticality (see Corollary \ref{cor} in Section \ref{sec:RSW}).\blfootnote{\textbf{Bibliographic note:} After the current paper was finished, we learned that an alternate proof of the lower bound in the
  box-crossing property of Theorem~\ref{thm:BXP} (existence of open crossings in
  long rectangles with positive probability) was obtained
independently in the very recent paper \cite{sapozhnikov2015crossing}, using a different argument.}

To define the minimal spanning tree (MST) on
a finite connected graph $G=\left( V,E\right) $, assign random weights
$\left\{ \omega \left( e\right) :e\in E\right\} $, that are i.i.d.\@ uniform
$\left[ 0,1\right] $ random variables, to its edges. The MST
is the spanning tree that minimizes the total weight. Equivalently, it can be
obtained from $G$ by deleting every edge whose weight is maximal in some
cycle. When $G$ is an infinite graph, two natural infinite volume limits can be
taken, which lead to the notion of free and wired minimal spanning forests. See
\cite{Alex}, \cite{Hagg} and \cite{LPS} for basic properties of minimal spanning
forests on infinite graphs.

On $\mathbb{Z}^{d}$, it is known that the free and wired minimal spanning
forests coincide (see Proposition \ref{unique}). Therefore, in this framework,
we can talk about \emph{the} minimal spanning forest without ambiguity. Although
it arises as the weak limit of minimal spanning trees on finite graphs, the
minimal spanning forest may no longer be a single tree, and can even have
infinitely many components. A natural question is, for which $d$ is the minimal
spanning forest in $\mathbb{Z}^{d}$ almost surely a single tree? This question
is largely open except for $d=2$ (and trivially for $d=1$), where the minimal
spanning forest is known to be a single tree (\cite{CCN}, \cite{AM}); the argument there
crucially relies on the planarity, and does not apply to $d\geq 3$. A much more
modest question, whether the number of components in minimal spanning forest in
$\mathbb{Z}^{d}$ is either $1$ or $\infty $ almost surely, also remains open.
Besides its own interest, the number of connected components of minimal spanning
forests is also closely related to the ground state structure of
nearest-neighbor spin glasses and other disordered Ising models \cite{NS}. It is
believed that there is a finite upper critical dimension $d_{c}$, below which
the minimal spanning forest is a single tree a.s., and above which it has
infinitely many components a.s. Based on a combination of rigorous and heuristic
arguments, there have been interesting competing conjectures that $d_{c}=8$
\cite{NS} (see also \cite{NS94}) or $d_{c}=6$ \cite{Jack}. But it has not even been proved that there are
multiple trees for very large $d$.

Another natural random forest measure on infinite graphs is the uniform
spanning forest, defined as the weak limit of uniform spanning trees on
finite subgraphs. As mentioned earlier, the geometry of uniform spanning forests
is much better
understood, because of the connections to random walks and potential theory (%
\cite{Wil}, see also \cite{BLPS}). Its upper critical dimension is thus
closely related to the intersection probability of random walks. It was
shown in \cite{Pem} that here $d_{c}=4$.

Minimal spanning forests are closely related to critical Bernoulli percolation
and invasion percolation. Just as (wired) uniform spanning forests can be
constructed by piecing together loop erased random walks by Wilson's algorithm \cite{Wil},
(wired) minimal spanning forests can be constructed using invasion trees (see
Proposition \ref{invtree} below). On the two dimensional triangular lattice,
Garban, Pete and Schramm \cite{GPS} proved that the minimal spanning tree on
this graph has a scaling limit, based on fine knowledge of near-critical
Bernoulli percolation.

In this paper, we study the minimal spanning forests on a class of non-planar
infinite graphs, namely two dimensional slabs, whose vertex set is of the form
$\mathbb{Z}^{2}\times \left\{ 0,...,k\right\} ^{d-2}$, for $%
k\in \mathbb{N}$. Although $d>2$ can be arbitrary here, these graphs are all
quasi-planar. A main result of this paper is Theorem \ref{MSF}, which states
that on any two dimensional slab, the minimal spanning forest is a single tree
a.s. The argument also applies to other quasi-planar graphs, such as
$\mathbb{Z}^{2}$
with non-nearest neighbor edges up to a finite distance --- see the remark after Theorem \ref{MSF}.

An important ingredient in the proof is the box-crossing property for critical
Bernoulli percolation on slabs, stated as Theorem~\ref{thm:BXP}. Its proof is
based on a Russo-Seymour-Welsh type theorem, and extends to a larger class of
models --- e.g.,\@ Bernoulli percolation on quasi-planar graphs invariant under
a non trivial rotation, or short-range Bernoulli percolation on $\Z^2$ invariant
under $\pi/2$-rotations.

Because of the relation between the minimal spanning forest and critical
Bernoulli percolation, it is not surprising that we adapt tools from the
percolation literature. Indeed, a major open question in Bernoulli percolation
is to prove in $\mathbb{Z}^{d}$, $3\leq d \leq 6$, that there is no percolation at the
critical point. Although this question is still beyond reach, it was
recently proved in \cite{DST} that non-percolation at criticality is valid for two-dimensional
slabs. A key technical ingredient in that proof is a gluing lemma for open
paths (see Theorem \ref{GL} below for a more general version). In this paper we
use a related gluing lemma (Lemma %
\ref{glue}) that applies to invasion trees and minimal spanning trees.

This paper is organized as follows. In Section \ref{sec:back} we collect
definitions and basic properties for minimal spanning forests and invasion
percolation, describe their connections, state the main result (Theorem
\ref{MSF}) for minimal spanning forests on slabs and sketch our proof. Section
\ref{sec:RSW} is devoted to the proof of Russo-Seymour-Welsh type box-crossing
theorems on slabs, which are used in our argument and are also of interest in
their own right (see especially Theorem~\ref{thm:BXP} and Corollary~\ref{cor}).
Finally, in Section \ref%
{sec:pf}, we collect all the ingredients to prove our gluing lemma for invasion
clusters, and thus conclude the proof of Theorem~\ref{MSF}. We note that one
ingredient, Lemma \ref{combi}, is an extension of the combinatorial lemma of
\cite{DST}. The extension is needed for the invasion setting where continuous
edge variables replace Bernoulli ones.

\section{Background and First Main Result\label{sec:back}}

\subsection{Minimal Spanning Forests\label{sec:MSF}}
Let $G=\left( V,E\right) $ be a finite graph. A subgraph $H$ of $G$ is spanning
if $H$ contains all vertices of $G$. A labeling is an injective
function $\omega :E\rightarrow %
\left[ 0,1\right] $. The number $\omega _{e}\dot{=%
}\omega \left( e\right) $ will be referred to as the label of $e$. Note that the
labeling induces a total ordering on $E$, where $e\prec e^{\prime }$ if $%
\omega \left( e\right) <\omega \left( e^{\prime }\right) $.

Define $\mathcal{T}^{\omega }$ to be a spanning subgraph of $G$ whose edge
set consists of all $e\in E$ whose endpoints cannot be joined by a path
whose edges are all strictly smaller than $e$. It is easy to see that $\mathcal{T}%
^{\omega }$ is a spanning tree, and in fact, among all spanning trees $%
\mathcal{T}$, $\mathcal{T}^{\omega }$ minimizes $\sum_{e\in \mathcal{T}%
}\omega \left( e\right) $ (\cite{LPS}).

\begin{definition}
  When $\left\{ \omega \left( e\right) :e\in E\right\} $ are i.i.d.\@ uniform $%
  \left[ 0,1\right] $ random variables, the law of the corresponding spanning
  tree $\mathcal{T}^{\omega }$ is called the minimal spanning tree (MST). The
  law of $\mathcal{T}^{\omega }$ defines a probability measure on $2^{E}$ (where we
  identify the tree $\mathcal{T}^{\omega }$ with its set of edges).
\end{definition}

When passing to infinite graphs, two natural definitions of minimal spanning
forests can be made, that arise as weak limits of minimal spanning trees on
finite graphs.

Let $G=\left( V,E\right) $ be an infinite graph, and $\omega :E\rightarrow %
\left[ 0,1\right] $ be a labeling function. Let $\mathcal{F}_{\mathrm f}^{\omega }$
be the set of edges $e\in E$, such that in every path in $G$ connecting the
endpoints of $e$, there is at least one edge $e^{\prime }$ with $\omega
\left( e^{\prime }\right) >\omega \left( e\right) $. When $\left\{ \omega
\left( e\right) :e\in E\right\} $ are i.i.d.\@ uniform $\left[ 0,1\right] $
random variables, the law of $\mathcal{F}_{\mathrm f}^{\omega }$ is called the free
minimal spanning forest (FMSF) on $G$.

An extended path joining two vertices $a,b\in V$ is either a simple path in $%
G$ joining them, or the disjoint union of two simple semi-infinite paths,
starting at $a,b$ respectively. Let $\mathcal{F}_{\mathrm w}^{\omega }$ be the set
of edges $e\in E$, such that in every extended path in $G$ connecting the
endpoints of $e$, there is at least one edge $e^{\prime }$ with $\omega
\left( e^{\prime }\right) >\omega \left( e\right) $. Analogously, when $%
\left\{ \omega \left( e\right) :e\in E\right\} $ are i.i.d. uniform $\left[
0,1\right] $ random variables, the law of $\mathcal{F}_{\mathrm w}^{\omega }$ is
called the wired minimal spanning forest (WMSF) on $G$.

It is clear that $\mathcal{F}_{\mathrm f}^{\omega }$ and $\mathcal{F}_{\mathrm w}^{\omega }$
are indeed forests. In addition, all the connected components in $\mathcal{F%
}_{\mathrm f}^{\omega }$ and $\mathcal{F}_{\mathrm w}^{\omega }$ are infinite. In fact, the
smallest label edge joining any finite vertex set to its complement belongs to
both forests.

We now describe how $\mathcal{F}_{\mathrm f}^{\omega }$ and $\mathcal{F}_{\mathrm w}^{\omega
}$ arise as weak limits of minimal spanning trees on finite graphs. Consider
an increasing sequence of finite, connected induced subgraphs $G_{n}\subset
G $, such that $\cup _{n\geq 1}G_{n}=G$. For $n\in \mathbb{N}$, let $%
G_{n}^{\mathrm w}$ be the graph obtained from $G$ by identifying the vertices in $%
G\backslash G_{n}$ to a single vertex.

\begin{proposition}[\protect\cite{Alex}\cite{LPS}]
Let $\mathcal{T}_{n}^{\omega },\bar{\mathcal T}_{n}^{\omega }
$ denote the minimal spanning tree on $G_{n}$ and $G_{n}^{\mathrm w}$, respectively,
that are induced by the labeling $\omega $. Then for any labeling function $\omega$, \[ \mathcal{F}_{\mathrm f}^{\omega
}=\lim_{n\rightarrow \infty }\mathcal{T}_{n}^{\omega }\text{, and } \mathcal{F}
_{\mathrm w}^{\omega }=\lim_{n\rightarrow \infty }\bar{\mathcal T}_{n}^{\omega }. \]
This means for every $e \in \mathcal{F}_{\mathrm f}^{\omega}$, we have $e \in \mathcal{T}_{n}^{\omega }$
for all sufficiently large $n$, and similarly for $\mathcal{F}_{\mathrm w}^{\omega}$.
\end{proposition}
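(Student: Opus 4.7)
The plan is to verify both identities edge by edge, using the standard MST characterization: on a finite graph $H$, an edge $e=\{u,v\}$ belongs to the MST if and only if there is no path in $H$ from $u$ to $v$ whose labels are all strictly smaller than $\omega(e)$; equivalently, $u$ and $v$ lie in different components of the subgraph $H_{<\omega(e)}$ spanned by such edges. For the free identity, fix $e$ and pick $n_0$ with $u,v\in G_n$ for all $n\geq n_0$. If $e\in\mathcal F^\omega_{\mathrm f}$, then since no path in $G$ joins $u$ to $v$ with labels below $\omega(e)$, no such path exists in the induced subgraph $G_n\subset G$ either, so $e\in\mathcal T^\omega_n$ for every $n\geq n_0$. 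Conversely, if $e\notin\mathcal F^\omega_{\mathrm f}$, a finite witness path $\pi\subset G$ exists, and since $\pi$ has finitely many edges and $\bigcup_n G_n=G$, we have $\pi\subset G_n$ eventually, whence $e\notin\mathcal T^\omega_n$ for $n$ large.

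The wired identity rests on the following reformulation, which I would establish first: $e\in\mathcal F^\omega_{\mathrm w}$ if and only if the endpoints $u,v$ lie in different components $C_u,C_v$ of $G_{<\omega(e)}$ \emph{and} at least one of the two is finite. Indeed, a simple $u$-$v$ path in $G_{<\omega(e)}$ realizes the first type of extended path ruled out by the definition, while if both $C_u$ and $C_v$ were infinite (and distinct), K\"onig's lemma would produce disjoint semi-infinite simple rays in each, providing the second type; conversely, if at least one component is finite, no semi-infinite ray from that endpoint exists in $G_{<\omega(e)}$, so neither type of extended path is available.

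Given this reformulation, the wired identity follows by the same scheme. If $e\notin\mathcal F^\omega_{\mathrm w}$, either a simple $u$-$v$ path in $G_{<\omega(e)}$ exists (and sits in $G_n$ eventually), or there are disjoint semi-infinite rays $\gamma_u,\gamma_v\subset G_{<\omega(e)}$ from $u,v$, in which case for $n$ large enough both rays exit $G_n$, and truncating them and joining their outside ends through the contracted vertex $\partial$ yields a path in $G_n^{\mathrm w}$ from $u$ to $v$ of labels below $\omega(e)$; thus $e\notin\bar{\mathcal T}^\omega_n$ eventually. Conversely, suppose $e\in\mathcal F^\omega_{\mathrm w}$ and, say, $C_u$ is finite, and choose $n_0$ with $C_u\subset G_n$ for $n\geq n_0$. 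Any path in $G_n^{\mathrm w}$ joining $u$ to $v$ with labels below $\omega(e)$ either lies entirely in $G_n$ --- contradicting that $u,v$ belong to different components of $G_{<\omega(e)}$ --- or visits $\partial$, in which case its $u$-initial segment in $G$ produces a path in $G_{<\omega(e)}$ from $u$ to a vertex of $G\setminus G_n$, contradicting $C_u\subset G_n$; hence $e\in\bar{\mathcal T}^\omega_n$ for all $n\geq n_0$. The step requiring the most care is this last one: one must correctly translate paths through the contracted vertex $\partial$ back into paths and rays in $G$, and recognize that the finiteness of $C_u$ or $C_v$ --- which is exactly what $e\in\mathcal F^\omega_{\mathrm w}$ provides --- is precisely what rules out paths through $\partial$ once $n$ is sufficiently large.
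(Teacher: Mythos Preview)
The paper does not supply its own proof of this proposition; it is quoted from \cite{Alex} and \cite{LPS} without argument. Your proof is correct and is essentially the standard one found in those references: the free case is immediate from the cycle/cut characterization of the MST together with the fact that any finite witness path eventually lies in $G_n$, and the wired case reduces, via your reformulation, to the observation that a finite component $C_u$ of $G_{<\omega(e)}$ is eventually absorbed into $G_n$, which blocks any path through the contracted vertex~$\partial$.

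One small point worth making explicit: your invocation of K\"onig's lemma to extract semi-infinite rays from the infinite components $C_u$, $C_v$ requires $G$ to be locally finite. The proposition as stated in the paper does not impose this hypothesis, but all the graphs under consideration (slabs, $\mathbb{Z}^d$, $\mathbb{Z}^2\times F$ with $F$ finite) are locally finite, so the assumption is harmless in context. It would be cleanest to state it.
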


One natural question on a given connected graph is whether the free and
wired minimal spanning forests coincide. To answer this question, we need to
explain the relation to critical Bernoulli percolation.

\begin{proposition}[\protect\cite{LPS},\cite{Alex}]
\label{unique}On any connected graph $G$, we have $\mathcal{F}_{\mathrm f}^{\omega }=%
\mathcal{F}_{\mathrm w}^{\omega }$ if and only if for almost every $p\in \left( 0,1\right) $,
 Bernoulli percolation on $G$ with parameter $p$ has at most one infinite
cluster a.s.
\end{proposition}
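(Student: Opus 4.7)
The plan is to exploit the standard monotone coupling that realizes every Bernoulli percolation simultaneously through $\omega$: setting $\eta_p := \{e \in E : \omega(e) \leq p\}$, the random subgraph $\eta_p$ has the law of Bernoulli bond percolation with parameter $p$. A direct unpacking of the definitions gives that $e \notin \mathcal{F}_{\mathrm f}^\omega$ iff the endpoints of $e$ are joined by a path inside $\{e' : \omega(e') < \omega(e)\}$, while $e \notin \mathcal{F}_{\mathrm w}^\omega$ iff they are joined there either by such a path or by a pair of disjoint semi-infinite paths. Combined with the trivial inclusion $\mathcal{F}_{\mathrm w}^\omega \subseteq \mathcal{F}_{\mathrm f}^\omega$, this yields the key identity
\[
  e \in \mathcal{F}_{\mathrm f}^\omega \setminus \mathcal{F}_{\mathrm w}^\omega \iff \text{the endpoints of $e$ lie in two distinct infinite components of $\{e' : \omega(e') < \omega(e)\}$}.
\]
The rest is a Fubini exchange between the law of $\omega$ and Lebesgue measure on $p \in (0,1)$.

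For the direction ``$\mathcal{F}_{\mathrm f}^\omega = \mathcal{F}_{\mathrm w}^\omega$ a.s.\ $\Rightarrow$ uniqueness for a.e.\ $p$'' I argue by contrapositive. If the set $A := \{p : \PP[\eta_p \text{ has }\geq 2 \text{ infinite clusters}] > 0\}$ has positive Lebesgue measure, then Fubini applied to the coupling shows that with positive $\omega$-probability the random set $\{p \in A : \eta_p \text{ has }\geq 2 \text{ infinite clusters}\}$ has positive Lebesgue measure, and in particular is nonempty. On such an event, pick a level $p$ and two distinct infinite clusters $C_1, C_2$ of $\eta_p$. Connectedness of $G$ forces these to merge by the time all edges are open, so there is a minimal level $p_0 \in (p,1]$ at which the components of $\eta_{p_0}$ containing $C_1$ and $C_2$ coincide; since labels are a.s.\ distinct, the merger is caused by the addition of a single edge $e_0$ with $\omega(e_0) = p_0$. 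Just below level $p_0$ these two components remain disjoint and still contain the infinite sets $C_1, C_2$, so the key identity yields $e_0 \in \mathcal{F}_{\mathrm f}^\omega \setminus \mathcal{F}_{\mathrm w}^\omega$, contradicting the assumed equality of forests.

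The converse is cleaner. If $\PP[\mathcal{F}_{\mathrm f}^\omega \neq \mathcal{F}_{\mathrm w}^\omega] > 0$, then since $E$ is countable there exists an edge $e$ with $\PP[e \in \mathcal{F}_{\mathrm f}^\omega \setminus \mathcal{F}_{\mathrm w}^\omega] > 0$. Conditional on $\omega(e)=p$, the remaining labels are i.i.d.\ uniform, so $\{e' \neq e : \omega(e') \leq p\}$ has the law of Bernoulli$(p)$ percolation on $E \setminus \{e\}$; by the key identity the conditional event forces this configuration to have at least two infinite clusters (one incident to each endpoint of $e$). Averaging over $p = \omega(e)$ via Fubini yields a positive-measure set of $p$ for which Bernoulli$(p)$ has multiple infinite clusters with positive probability, contradicting uniqueness for a.e.\ $p$.

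The only genuine subtlety is the key identity itself, where one must carefully handle the ``extended path'' option and verify that the two-semi-infinite-paths alternative corresponds exactly to placing both endpoints of $e$ in (necessarily distinct) infinite clusters of $\{e' : \omega(e') < \omega(e)\}$. Once this identity is in hand, the rest of the proof is routine Fubini bookkeeping on the coupled probability space.
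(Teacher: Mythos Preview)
The paper does not prove this proposition; it is simply cited from \cite{LPS} and \cite{Alex}. Your overall strategy (the key identity plus Fubini through the standard coupling) is indeed the standard one, and your key identity is correct for locally finite graphs. There are, however, gaps in both directions.

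The substantive gap is in your first direction. The assertion that ``the merger is caused by the addition of a single edge $e_0$ with $\omega(e_0)=p_0$'' is not justified: the infimum $p_0=\inf\{q:C_1\leftrightarrow C_2\text{ in }\eta_q\}$ need not be attained on a general locally finite graph. There can be infinitely many finite paths from $C_1$ to $C_2$ whose maximum labels accumulate at $p_0$ from above with no path achieving the value $p_0$; nothing in your argument rules this out, and when it happens there is no ``merging edge'' to exhibit. The usual way around this is to drop the $\omega$-pointwise search for an edge and instead run the Fubini argument symmetrically in both directions: from your key identity, $\PP[e\in\mathcal F_{\mathrm f}^\omega\setminus\mathcal F_{\mathrm w}^\omega]=\int_0^1\PP_p^{G\setminus e}[u_e,v_e\text{ in distinct infinite clusters}]\,dp$, so $\mathcal F_{\mathrm f}^\omega=\mathcal F_{\mathrm w}^\omega$ a.s.\ iff for every edge $e$ and a.e.\ $p$ the endpoints of $e$ are a.s.\ not in distinct infinite $p$-clusters. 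One then needs the insertion-tolerance fact that for Bernoulli percolation, $\PP_p[\geq 2\text{ infinite clusters}]>0$ iff \emph{some} edge has its endpoints in distinct infinite clusters with positive probability; this is the genuine extra ingredient that your merging-edge shortcut was trying to replace.

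There is also a small slip in your converse: conditioning on $\omega(e)=p$ gives Bernoulli$(p)$ on $E\setminus\{e\}$, not on $E$, so you have only shown non-uniqueness on $G\setminus e$. This is easily repaired by noting that with probability $1-p>0$ the edge $e$ is closed, in which case the cluster structure on $G$ coincides with that on $G\setminus e$, hence $\PP_p^G[\geq 2\text{ infinite clusters}]\ge(1-p)\,\PP_p^{G\setminus e}[\geq 2\text{ infinite clusters}]>0$.
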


Let $[k]:=\left\{ 0,...,k\right\} $ and $\mathbb S_{k}:=\mathbb{Z}%
^{2}\times \lbrack k]$ be the slab of thickness $k$. It follows from \cite%
{AKN} and \cite{BK} that the infinite cluster on $\mathbb{S}_{k},$ if it exists,
is a.s.\@ unique. Therefore on $\mathbb{S}_{k}$, WMSF and FMSF coincide. This
justifies referring to \emph{the} minimal spanning forest on $\mathbb S_k$
without ambiguity.

\subsection{Invasion Percolation} \label{inv}

We now define invasion percolation, an object closely related to WMSF and
critical Bernoulli percolation. Let $\{ \omega ( e) :e\in
  E\} $ be i.i.d.\@ uniform $[ 0,1] $ random variables. The
invasion cluster $\mathcal{I}_{v}$ of a vertex $v$ is defined as a union of
subgraphs $\mathcal{I}_{v}( k) $, where $\mathcal{I}_{v}(
  0) =\{ v\} $, and $%
\mathcal{I}_{v}( k+1) $ is $\mathcal{I}_{v}( k) $ together
with the lowest labeled edge (and its vertices) not in $\mathcal{I}_{v}( k) $ but incident to
some vertex in $\mathcal{I}_{v}( k) $.

We also define the invasion tree, $T_{v}$ of a vertex $v$, as the increasing
union of trees $T_{v}( k) $, where $T_{v}( 0) =\{
v\} $, and $T_{v}( k+1) $ is $T_{v}( k) $
together with the lowest edge (and its vertices) joining $T_{v}( k) $ to a vertex
{\it not\/} in $T_{v}( k) $. Notice that $\mathcal{I}_{v}$ has the same
vertices as $T_{v}$, but may have additional edges.

The following proposition in \cite{LPS} (see also \cite{NS}) describes the relation between
invasion trees and WMSF.

\begin{proposition} \label{invtree} Let $G=( V,E) $ be a locally
  finite graph. Then
\begin{equation*}
\mathcal{F}_{\mathrm w}^{\omega }=\cup _{v\in V}T_{v} \quad\text{a.s.}
\end{equation*}
\end{proposition}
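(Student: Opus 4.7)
My plan is to reduce the proposition to a ``finite-cut'' characterization of WMSF edges: an edge $e=\{a,b\}$ lies in $\mathcal{F}_{\mathrm w}^{\omega}$ if and only if there exists a finite set $A\subset V$ containing exactly one endpoint of $e$ such that $e$ is the minimum-labeled edge in the edge-boundary $\partial A$. Once this is in place, each of the two inclusions in the proposition matches naturally with one step of the invasion algorithm.

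To prove the characterization, for the ``if'' direction I would take such a finite $A$ (say $a\in A$, $b\notin A$) and analyze an arbitrary extended path from $a$ to $b$ avoiding $e$. A simple path must cross $\partial A$ an odd number of times, hence uses some edge of $\partial A\setminus\{e\}$ whose label strictly exceeds $\omega(e)$. A disjoint union of two semi-infinite simple paths starting at $a$ and $b$ must have its $a$-piece leave the finite set $A$ via an edge of $\partial A$; this exiting edge cannot be $e$, for otherwise the $a$-piece would visit $b$ and violate disjointness. For the ``only if'' direction I would let $A$ be the connected component of $a$ in the subgraph of $G\setminus\{e\}$ obtained by keeping only edges with label $<\omega(e)$, and similarly $B$ the component of $b$. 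If both were infinite, K\"onig's lemma in the locally finite graph $G$ would produce disjoint semi-infinite simple paths from $a$ and $b$ labeled entirely below $\omega(e)$, contradicting $e\in\mathcal{F}_{\mathrm w}^{\omega}$; and $A\cap B=\emptyset$ for a similar reason with a simple path. So one of $A$, $B$ is finite, and by its very definition every edge of $\partial A$ has label $\geq\omega(e)$ while $e\in\partial A$, giving the desired witness cut.

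With the characterization established, the two inclusions fall out. If $e\in T_v$ is added at invasion step $k+1$ then $A:=T_v(k)$ is finite, contains exactly one endpoint of $e$, and $e$ is by the greedy rule the minimum-labeled edge of $\partial A$; hence $e\in\mathcal{F}_{\mathrm w}^{\omega}$. Conversely, given $e=\{a,b\}\in\mathcal{F}_{\mathrm w}^{\omega}$ with witness cut $A\ni a$, $b\notin A$, since $T_a$ is infinite and $A$ is finite there is a first step $k+1$ at which the invasion leaves $A$; the added edge $e^*$ then lies in $\partial A$, so $\omega(e^*)\geq\omega(e)$, while $e$ itself is a boundary edge of $T_a(k)\subseteq A$ (as $a\in T_a(k)$, $b\notin T_a(k)$), so by the greedy rule $\omega(e^*)\leq\omega(e)$. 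Almost sure distinctness of the i.i.d.\ uniform labels then forces $e^*=e$, so $e\in T_a\subseteq\bigcup_v T_v$. The most subtle point I anticipate is the use of K\"onig's lemma in the characterization, which is precisely what makes the ``wired'' (two disjoint semi-infinite paths) obstruction interact correctly with the invasion procedure; everything else is a bookkeeping exercise comparing the greedy rule with the minimum label on a finite cut.
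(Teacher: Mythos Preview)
The paper does not prove this proposition; it is quoted from \cite{LPS} (and \cite{NS}) without argument. So there is no in-paper proof to compare against, and your write-up stands on its own.

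Your argument is correct. The cut characterization you prove --- $e\in\mathcal{F}_{\mathrm w}^{\omega}$ iff $e$ is the minimum-label edge of $\partial A$ for some finite $A$ separating its endpoints --- is exactly the right intermediary, and both inclusions then follow from the greedy rule as you describe. The use of K\"onig's lemma is the genuine content: it converts ``both low-label components infinite'' into the pair of disjoint rays that witnesses $e\notin\mathcal{F}_{\mathrm w}^{\omega}$, and local finiteness is essential here.

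Two small points of polish. First, in the reverse inclusion you write ``since $T_a$ is infinite and $A$ is finite''; strictly, the proposition is stated for arbitrary locally finite $G$, so $T_a$ need not be infinite (e.g.\ if the component of $a$ is finite). The argument still works: if $T_a$ never left $A$ it would terminate inside the finite set $A$, yet $e=\{a,b\}$ with $a\in T_a$ and $b\notin A$ would then be an available boundary edge, a contradiction. Second, ``crosses $\partial A$ an odd number of times'' is more than you need; a single crossing suffices, which is immediate from $a\in A$, $b\notin A$.
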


Therefore, to show $\mathcal{F}_{\mathrm w}^{\omega }$ is a single tree, it suffices
to prove for any $v\in V$, $\mathcal{I}_{0}\cap \mathcal{I}_{v}\neq
\emptyset $.

We now describe the connection between invasion percolation and critical Bernoulli
percolation. An edge $e\in E$ is said to be $p$-open if its weight satisfies
$\omega(e)<p$. The connected components of the graph induced by the $p$-open
edges are called $p$-open clusters. Notice that the set of $p$-open edges is a
Bernoulli bond percolation process on $G$ with edge density~$p$.

Let $p_{c}( G) $ be the critical
probability for Bernoulli bond percolation on $G$. For any $p>p_{c}( G) $,
 there exists almost surely an infinite $p$-open cluster. Suppose that
for some $k$, $\mathcal{I}_{v}( k) $ contains a vertex of this
cluster. Then all edges invaded after time $k$ remain in this cluster.

To make another observation, denote by $\mathcal{C}%
_{p_{c}}( v) $ the $p_{c}$-open cluster of a vertex $v\in G$, and
write $\theta_v ( p_c)$ for the probability that $\mathcal
C_{p_c}(v)$ is infinite. If $\theta_v ( p_c) =0$, then of course
the $p_c$-open cluster $\mathcal{C}_{p_{c}}( v) $ is finite a.s. This
implies that once $v$ is reached by an invasion, then (with probability $1$) all edges in $\mathcal{C}_{p_{c}}(
  v) $ will be invaded before any edges with label $\geq p_{c}$ are invaded.
In particular, when $\theta_v(p_c)=0$, the $p_c$-cluster of $v$
satisfies $\mathcal{C}_{p_{c}}( v)\subset \mathcal{I}_{v} $.

\subsection{Notation, conventions\label{sec:notation-conventions}}

\bigskip We consider the space of configurations $( \Omega
  ,\mathcal{F},\mathbb{P}) $, where $\Omega =[ 0,1] ^{E}$ ($E$
denotes the edge set of the slab $\mathbb{S}_{k}$, given by the pairs of points
at Euclidean distance 1 from each other), $\mathcal{F}$ is the Borel $\sigma $-%
field on $\Omega $, and $\mathbb{P}$ is the underlying (product of uniforms)
probability measure. Given the labelling function $\omega \in \Omega $, and
$S\subset E$, we use $%
\omega |_{S}$ to denote the restriction of $\omega $ to $S$.

Given $a,b\in \mathbb{Z}$, $a<b$, let $[ a,b] =\{
  a,a+1,...,b\} $, and we simply denote by $[k]$ the set $[
  0,k] $. For any subset $S\subset \mathbb{Z}^{2}$, we denote by
$\bar{S}$ the set $S\times [ k] \subset \mathbb{S}_{k}$, for $z \in
\mathbb{Z}^{2}$, we denote by $\bar{z}$ the set $\{z\}\times [k]$, and for
$S\subset \mathbb S_k$, we denote by $\bar S$ the set $\pi(S)\times[k]$, where
$\pi(S)$ is the projection of $S$ onto $\mathbb Z^2$. For $x\in \mathbb{S}_{k}$
and $U,V\subset $ $\mathbb{S}_{k}$, we denote by $\vert x\vert $ the
Euclidean norm of $x$, and $\mathrm{dist}( U,V) \dot{=}\min_{u\in U,v\in
  V}\vert u-v\vert $. For $x=(x_{1}, x_{2}, x_{3}), y=(y_{1}, y_{2},
y_{3}) \in \mathbb S_k$ and $U,V\subset \mathbb{S}_{k}$, we define
$\mathrm{dist}^{*}(x,y) =\max \{|x_{1}-x_{2}|, |y_{1}-y_{2}|\}$, and $\mathrm{dist}^{*}(U,V)
=\min_{x \in U, y\in V} \text{dist}^{*}(x,y)$. The vertex-boundary of a set
$U\subset \mathbb S_k$ is denoted by $\partial U$ (it is defined as the set of
vertices in $U$ with a neighbor in $\mathbb S_k\setminus U$). Given $%
m>n>0$, define $B_{n}(x) =x+ [ -n,n] ^{2}$ and $%
A_{n,m}( x) =B_{m}( x) \backslash B_{n}( x) $.
When $x$ is the origin, we will omit the dependence on $x$. If $z\in \mathbb
S_k$, we write $B_n(z)$ for the ball $B_n(\pi(z))$, where $\pi(z)$ is the projection of $z$
onto $\mathbb Z^2$.

\subsection{Single Tree Result}

\begin{theorem}
\label{MSF}For any $k\in \mathbb{N}$, the minimal spanning forest on $%
\mathbb{S}_{k}$ is a single tree a.s.
\end{theorem}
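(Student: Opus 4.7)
The plan is to reduce Theorem \ref{MSF} to a question about intersection of invasion trees, and then combine the RSW-type box-crossing property with a gluing lemma tailored to the invasion. Since the infinite cluster on $\mathbb S_k$ is a.s.\ unique, Proposition \ref{unique} gives $\mathcal F_{\mathrm f}^\omega=\mathcal F_{\mathrm w}^\omega$, so it suffices to show $\mathcal F_{\mathrm w}^\omega$ is connected. By Proposition \ref{invtree}, $\mathcal F_{\mathrm w}^\omega=\bigcup_{v}T_v$ a.s., and since each invasion tree is connected, the statement reduces to proving that for every pair of vertices $u,v\in\mathbb S_k$ one has $T_u\cap T_v\neq\emptyset$ a.s. By translation invariance it is enough to treat the case $u=0$ and arbitrary $v$.

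Invasion, at large scales, is driven by critical percolation. By \cite{DST}, $\theta_v(p_c)=0$ on $\mathbb S_k$, so $\mathcal C_{p_c}(v)\subset\mathcal I_v$ a.s.\ for every $v$, and the labels of the edges invaded by $\mathcal I_0$ outside $B_n$ converge to $p_c$ as $n\to\infty$. Hence, whenever $T_0$ crosses an annulus $A_{n,2n}$ far from the origin, it does so essentially along a near-critical open cluster, and the same holds for $T_v$ as soon as $n$ is large relative to $v$.

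The central ingredient is a scale-by-scale positive-probability gluing estimate of the form
\[ \mathbb P\bigl(T_0\cap T_v\cap A_{n,2n}\ne\emptyset\bigr)\ge c, \]
uniform in $n\ge n_0(v)$. To prove it I would use Theorem \ref{thm:BXP} to produce, with uniformly positive probability, a $p_c$-open circuit around the origin in $A_{n,2n}$ together with $p_c$-open connections from this circuit to both the inner and outer boundaries. The invasion gluing lemma (Lemma \ref{glue}), whose combinatorial backbone is Lemma \ref{combi}, then upgrades this geometric event into the statement that both $T_0$ and $T_v$ actually share a vertex on the circuit. Iterating over well-separated dyadic annuli $A_{n_k,2n_k}$, on which the relevant events depend on approximately disjoint sets of edges and are therefore nearly independent, a Borel--Cantelli-type argument yields $\mathbb P(T_0\cap T_v\neq\emptyset)=1$ and hence Theorem \ref{MSF}.

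The main obstacle is the invasion gluing lemma itself. Unlike in the Bernoulli setting of \cite{DST}, whether $T_0$ actually follows a given $p_c$-open crossing in $A_{n,2n}$ is not a monotone event on a fixed configuration: it depends on a delicate comparison between labels in the annulus and labels on all alternative routes, both inside and far outside. This forces one to replace the $\{0,1\}$-valued combinatorial reasoning of \cite{DST} by a continuous-label analogue, which is precisely the role of Lemma \ref{combi}. Once Lemma \ref{glue} is in hand, the derivation of Theorem \ref{MSF} via the outline above is essentially formal.
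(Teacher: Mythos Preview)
Your outline follows the same strategy as the paper: reduce to $\mathcal I_0\cap\mathcal I_x\neq\emptyset$ via Proposition~\ref{invtree}, use the box-crossing property to produce circuits, invoke Lemma~\ref{glue} at each scale, and iterate. Two points in your description, however, do not match what is actually needed and would not work as written.

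First, the geometric input is not ``a $p_c$-open circuit together with $p_c$-open connections from this circuit to both the inner and outer boundaries.'' What is required is the circuit event $C_{n_i,2n_i}$ \emph{and} a closed blocking surface event $D_{2n_i,m_i}$ in a larger annulus (Corollary~\ref{cor}, Items~\ref{item:3} and~\ref{item:4}). The blocking surface is what guarantees that the \emph{stopped} invasion $\mathcal I_0^{m_i}$, once it touches any vertex of $\bar B_{2n_i}$, must swallow the entire $p_c$-cluster of that vertex before exiting $\bar B_{m_i}$; without it you cannot conclude that touching $\bar\Gamma_{\min}$ forces $\Gamma_{\min}\subset\mathcal I_0^{m_i}$. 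Open connections to the boundaries play no role here.

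Second, the iteration is not driven by ``approximate independence'' over disjoint annuli. Invasion events are highly nonlocal: whether $T_0$ meets the circuit in $A_{n_i,2n_i}$ depends on labels arbitrarily far away, so the events at different scales are not even approximately independent. The paper handles this by stating Lemma~\ref{glue} in conditional form---the estimate holds uniformly over \emph{any} Borel event $A$ measurable in $\bar B_{m_{i-1}}$---which yields $\mathbb P[\mathcal Z^i\mid (\mathcal Z^{i_0})^c,\ldots,(\mathcal Z^{i-1})^c]\ge c_1$ and hence a clean L\'evy--Borel--Cantelli argument. Your appeal to disjoint supports would not justify this step.
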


\begin{remark}
As we will see from the proof below, the same argument applies to
$\mathbb{Z}^{2}\times F$, where $F$ is any finite connected graph. This includes $F=\{
0,...,k\} ^{d-2}$, for $d\geq 3$. Similar arguments also apply to the finite range extensions
$\mathbb{Z}^{2}_{K} = (\mathbb{Z}^{2}, E_{K})$ of $\mathbb{Z}^{2}$, where $E_{K}= \{(x,y):|x-y|\leq K\} $.
\end{remark}

A sketch of the proof is as follows; the complete proof is in Sections \ref{sec:RSW} and \ref{sec:pf} below. By Proposition \ref{invtree}, it suffices to prove that for any $x\in \mathbb{S}%
_{k}$, $\mathcal{I}_{0}\cap \mathcal{I}_{x}\neq \emptyset $. This is shown in two steps.

1. We first prove that with bounded away from zero probability, there is a
$p_{c}$-open circuit in the annulus $\ol{A_{n,2n}}$. This follows from the
box-crossing property for critical Bernoulli percolation on $\mathbb{S}_{k}$
which we will prove in Section \ref{sec:RSW} --- see Theorem \ref{thm:BXP}. The
proof uses a new Russo-Seymour-Welsh type theorem, based on gluing lemmas given
in Section \ref{sec:RSW} below (like those first established in \cite{DST}).

2. It follows from Step 1 that infinitely many disjoint $p_{c}$-open circuits in
$\mathbb{S}_{k}$ ``surround'' the origin. In particular, the projections of
$\mathcal{I}_{0}$ and $\mathcal{I}_{x}$ on $\mathbb{Z}^{2}$ intersect the
projections of $p_{c}$-open circuits infinitely many times. In
Section~\ref{sec:pf}, we prove a version of the gluing lemma adapted to invasion
clusters, which says, roughly speaking, that each time the invasion cluster
$\mathcal{I}_{0}$ crosses an annulus $\ol{A_{n,2n}}$, it ``glues'' to a
$p_c$-open circuit $\mathcal C_n$ in this annulus with probability larger than a
constant $c>0$ (independent of $n$). The same argument applies to the invasion
cluster $\mathcal{I}_{x}$. Therefore, with probability 1, $\mathcal{I}_{0}$ and
$\mathcal{I}_x$ eventually are glued to the same $p_c$-open circuit, which
implies that $\mathcal{I}_{0}\cap \mathcal{I}_{x}\neq \emptyset$.


\section{RSW Theory and Power Law Decay on Slabs\label{sec:RSW}}

In this section, we consider Bernoulli bond percolation with density $p$ on the
slab~$\mathbb{S}_{k}$: each edge is declared independently open with probability
$p$ and closed otherwise. Write $\mathbf P_p$ for the resulting probability
measure on the configuration space $\{0,1\}^{E}$. Let $R=\ol{[x,x^{\prime }]\times
\lbrack y,y^{\prime }]}$ be a rectangle in $\mathbb S_k$. We say that $%
R$ is crossed horizontally if there exists an open path from $\ol{\{x\}\times
[y,y^{\prime }]}$ to $\ol{\{x^{\prime }\}\times [y,y^{\prime }]}$ inside $R$. We denote this
event by $\mathcal{H}(R)$. For $m,n\geq 1$, we define for $p \in [0,1]$,
\begin{equation}
f(m,n)=f_p(m,n):=\mathbf{P}_{p}\left[ \mathcal{H}\left (\ol{[0,m]\times[0,n]}\right)\right].
\end{equation}

In this section, we prove the following result, that the box-crossing property holds
for critical Bernoulli percolation on the slab $\mathbb S_k$, for every fixed
$k\ge 1$.

\begin{theorem}[Box-crossing property]
\label{thm:BXP} Let $p=p_{c}(\mathbb S_{k})$. For every $\rho >0$, there
exists a constant $c_{\rho }$ such that for every $n\geq 1/\rho $,
\begin{equation}
c_{\rho }\leq f(n,\lfloor \rho n\rfloor )\leq 1-c_{\rho }.  \label{eq:18}
\end{equation}
\end{theorem}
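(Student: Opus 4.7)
The plan is to prove the two inequalities in \eqref{eq:18} by separate arguments, both relying on the $\pi/2$-rotation symmetry of $\mathbb{S}_k$ (acting in the $\mathbb{Z}^2$ factor) and on a gluing lemma for open paths (the slab analogue of the one in \cite{DST}; compare Theorem~\ref{GL}). The upper bound $f(n,\lfloor\rho n\rfloor)\le 1-c_\rho$ will follow rather quickly from the absence of a $p_c$-open infinite cluster proved in \cite{DST}, while the lower bound $f(n,\lfloor\rho n\rfloor)\ge c_\rho$ is a genuine Russo--Seymour--Welsh (RSW) statement and constitutes the main technical content of the section.

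For the upper bound I would argue by contradiction: suppose $f(n_i,\lfloor\rho n_i\rfloor)\to 1$ along some subsequence $n_i\to\infty$. The FKG inequality lets me combine horizontal crossings of adjacent translates of $\ol{[0,n]\times[0,\lfloor\rho n\rfloor]}$, and $\pi/2$-rotation invariance yields vertical crossings with the same probability. The gluing lemma can then be used to patch such horizontal and vertical crossings into a $p_c$-open circuit in the annulus $\ol{A_{N,2N}}$ with probability arbitrarily close to $1$ for $N$ large. Chaining these nearly certain circuits at a geometric sequence of scales gives, with positive probability, an infinite $p_c$-open cluster surrounding the origin, contradicting $\theta_0(p_c)=0$ from \cite{DST}.

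For the lower bound I would split the argument into a seed estimate $\liminf_n f(n,n)>0$ and a standard RSW bootstrap. Granted the seed estimate, the bootstrap proceeds as follows: by $\pi/2$-symmetry, horizontal and vertical crossings of an $n\times n$ square have equal (positive) probability, and the gluing lemma plays the role that planar duality plays in Russo's proof, allowing one to combine a horizontal crossing of one square with a vertical crossing of a translated copy into a horizontal crossing of a rectangle of larger aspect ratio, say $\tfrac{3n}{2}\times n$. Iterating this construction and using FKG at each stage yields $f(n,\lfloor\rho n\rfloor)\ge c_\rho$ for every $\rho>0$.

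The main obstacle, and the step where quasi-planarity is fully exploited, is the seed estimate $\liminf_n f(n,n)>0$. On $\mathbb{Z}^2$ this is immediate from self-duality at $p_c=1/2$, but no such self-duality is available on $\mathbb{S}_k$. My approach would be a contradiction argument in the spirit of the Duminil-Copin--Tassion sharpness proof: if $f(n,n)\to 0$ along some subsequence, then iterating the gluing lemma together with $\pi/2$-symmetry would propagate this decay across all scales and force $\mathbf{P}_{p_c}[0\leftrightarrow\partial \ol{B_n}]$ to decay exponentially in $n$, placing $p_c$ strictly in the subcritical regime of Bernoulli percolation on $\mathbb{S}_k$ and contradicting the definition of $p_c$. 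Combining this with the upper bound already established closes the argument.
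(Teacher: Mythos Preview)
Your overall architecture is right—renormalization seeds plus an RSW-type bootstrap, with the gluing lemma replacing planar intersection—but there is a genuine gap in the upper bound, and the difficulty sits precisely where you gloss over it.

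For the upper bound, the nontrivial case is $\rho>1$, i.e.\ bounding the \emph{easy}-direction crossing probability away from $1$ (by monotonicity of $f$ in its second argument, the case $\rho\le 1$ follows from $\rho>1$, not the other way around). Your contradiction argument assumes $f(n_i,\lfloor\rho n_i\rfloor)\to 1$ and then proposes to ``patch horizontal and vertical crossings into a $p_c$-open circuit in $\ol{A_{N,2N}}$''. But a circuit in $\ol{A_{N,2N}}$ requires \emph{hard}-direction crossings of $N\times 4N$ rectangles, and neither FKG nor the gluing lemma lets you manufacture those from easy-direction crossings alone: gluing overlapping easy crossings of $\ol{[0,n]\times[0,\rho n]}$ in the horizontal direction gives you $f(jn,\rho n)\ge \mathsf h^{j-1}(f(n,\rho n))$ only once you already control a \emph{square} crossing (this is how Proposition~\ref{prop:standard-inequality} is set up), and $f(n,\rho n)$ large does not give $f(\rho n,\rho n)$ large. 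Converting ``$\sup_n f(n,2n)=1$'' into ``$\sup_n f(2n,n)=1$'' is exactly the content of the paper's high-probability RSW (Theorem~\ref{thm:RSWhp}), and that theorem is not a formality: its proof \emph{uses} the lower-bound RSW (to produce positive-probability circuits at every scale, boosted to near-$1$ circuits in a thick annulus by independence), and then glues such a circuit to a near-$1$ easy crossing via a square-root-trick localization. In particular, your ordering—upper bound first, then lower bound—cannot work as written.

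Two smaller points. First, your seed should be $\inf_n f(n,2n)>0$ rather than $\liminf_n f(n,n)>0$: the Kesten doubling $f(2n,4n)\le 25\,f(n,2n)^2$ (Lemma~\ref{lem:FC2}) is what feeds the exponential-decay contradiction, and it is an inequality for easy-direction crossings; it is not clear how ``$f(n,n)$ small along a subsequence'' propagates, and the gluing lemma gives lower bounds on connection probabilities, not the upper bounds you would need to ``propagate decay''. Second, the RSW bootstrap on the slab is not just ``gluing a horizontal square crossing to a vertical one''. The paper's Theorem~\ref{thm:RSW} goes from $\inf_n f(n,2n)>0$ to $\inf_n f(2n,n)>0$ via a three-case analysis: either a long vertical crossing hits the right boundary segment directly, or it comes close to the minimal horizontal crossing (where Theorem~\ref{GL} applies), or—after an exploration of the cluster of the horizontal crossing—one obtains an independent crossing in the unexplored region above it, to which a reflection argument and a further gluing apply. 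The reflection/exploration step is where quasi-planarity is really used, and it is not captured by a one-line invocation of the gluing lemma.
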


\begin{remark}
  In our proof, the bound $c_\rho$ we obtain depends on the thickness $k$ of the
  slab. Due to our use of the gluing lemma, the bounds we obtain get worse when
  the thickness of the slab increases. More precisely, for fixed $\rho>0$ and
  for the slab with thickness $k$, our proof provides us with a constant
  $c_\rho=c_\rho(k)$,  and the sequence $(c_\rho(k))$ converges quickly to $0$ as the
  thickness $k$ tends to infinity. Getting better bounds would be very
  interesting to help understand critical behavior on $\mathbb Z^3$
  (which corresponds to $k=\infty$).
\end{remark}

The box-crossing property was established by Kesten
(see~\cite{kesten1982percolation}) for critical Bernoulli percolation on two
dimensional lattices under a symmetry assumption. The proof relies on a result
of Russo, Seymour and Welsh (\cite{russo1978note,seymour1978percolation}) which
relates crossing probabilities for rectangles with different aspect ratios.
Recently, the box-crossing property has been extended to planar percolation
processes with spatial dependencies, e.g. continuum
percolation~\cite{tassion2015crossing,alhberg2015continuum} or the
random-cluster model \cite{duminil2015continuity}.

The box-crossing property has been instrumental in many works on Bernoulli
percolation, and has numerous applications. These include Kesten's scaling
relations \cite{kesten1987scaling}, bounds on critical exponents (e.g.
polynomial bounds on the one arm event), computation of universal
exponents and tightness arguments in the study of the scaling limit
\cite{smirnov2001critical}, to name a few. We expect that similar results can be
derived from the box-crossing property of Theorem~\ref{thm:BXP}. Next, we state
some direct useful consequences of the box-crossing property (these are proved in Section
\ref{sec:proof-coroll-prot}).

\begin{corollary}
\label{cor}For critical Bernoulli percolation on the slab $\mathbb S_k$, we have:
\begin{enumerate}[\bf 1.]
\item\label{item:3} {\normalfont [Existence of circuits with positive probability]} There
  exists $c>0$ such that for every $n\ge1$,
  \[\mathbf P_p [ \text{there exists an open circuit in $\ol{A_{n,2n}}$ surrounding $\ol{B_n}$} ]\ge c .\]

\item\label{item:4} {\normalfont [Existence of blocking surfaces with positive probability]} There exists
 $c>0$ such that for every $n\ge1$,
  \[\mathbf P_p [ \text{there exists an open path from $\bar{B_n}$  to $\partial
  \bar{B}_{2n}$} ]\le 1-c .\]

\item\label{item:5} {\normalfont [Polynomial decay of the 1-arm event]}  There exists $\delta >0$, such
  that for $n>m\geq 1$,
  \begin{equation*}
    \mathbf P_p [ \text{there exists an open path from $\bar{B}_{m}$ to $\partial \bar{B}_{n}$} ] \leq \left( m/n\right) ^{\delta }.
  \end{equation*}
\end{enumerate}
\end{corollary}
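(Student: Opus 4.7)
The plan is to deduce the three items of Corollary~\ref{cor} from Theorem~\ref{thm:BXP} by the standard FKG plus dyadic-independence machinery of planar RSW theory, but working purely on the primal side since the slab does not admit a clean planar duality.

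For \ref{item:3}, I would cover $\ol{A_{n,2n}}$ by four long rectangles of aspect ratio $4$: $R_{\mathrm t}=\ol{[-2n,2n]\times[n,2n]}$ and its three $\pi/2$-rotates $R_{\mathrm b},R_{\mathrm l},R_{\mathrm r}$. Theorem~\ref{thm:BXP} with $\rho=1/4$ gives that each $R_i$ carries a long-direction open crossing with probability at least $c_{1/4}>0$; these four events are increasing, so by FKG they occur jointly with probability at least $c_{1/4}^{4}$. On that intersection, the four long crossings still have to be concatenated into a single loop whose projection onto $\mathbb Z^{2}$ surrounds $B_n$: in the plane this would be forced by Jordan's theorem, but in the slab, consecutive crossings may sit at different heights in $[k]$, so one genuine gluing in each of the four corner squares is required. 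This is precisely the type of step handled by the slab gluing lemma established in Section~\ref{sec:RSW} for the proof of Theorem~\ref{thm:BXP}, and yields an open circuit with probability bounded below uniformly in $n$.

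For \ref{item:4}, I would keep the same four rectangles and now use the \emph{upper} bound of Theorem~\ref{thm:BXP}. Let $F_i$ be the decreasing event that $R_i$ carries no open crossing in its short direction; for example, $F_{\mathrm t}$ is the event that no open path in $R_{\mathrm t}$ joins $\ol{[-2n,2n]\times\{n\}}$ to $\ol{[-2n,2n]\times\{2n\}}$. By the $\pi/2$-rotational symmetry of $\mathbb S_k$ in the first two coordinates, each short-direction crossing probability equals $f(n,4n)$, and the upper bound with $\rho=4$ gives $\mathbf{P}[F_i]\geq c_4>0$; FKG for decreasing events then yields $\mathbf{P}[\bigcap_i F_i]\geq c_4^{4}$. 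The combinatorial core is the inclusion $\bigcap_i F_i \subseteq \{\ol{B_n}\text{ is not connected to }\partial\ol{B_{2n}}\}$: given an open path $\gamma$ realising such a connection, truncate at its first visit to $\partial \ol{B_{2n}}$ so that $\gamma\subset\ol{B_{2n}}$; by symmetry its endpoint lies on one of the four faces, say the top face $\{y=2n\}$; then the maximal suffix of $\gamma$ whose second coordinate stays in $[n,2n]$ is entirely contained in $R_{\mathrm t}$ and forms a short-direction crossing, contradicting $F_{\mathrm t}$.

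Finally, \ref{item:5} is a standard dyadic iteration of \ref{item:4}. Any open path from $\ol{B_m}$ to $\partial\ol{B_n}$ induces, in every dyadic annulus $\ol{A_{2^km,\,2^{k+1}m}}$ with $2^{k+1}m\leq n$, an open cluster connecting the inner and outer boundaries of that annulus, an event of probability at most $1-c$ by \ref{item:4}. These events depend on disjoint edge sets across different scales, hence are independent, and multiplying over the $\lfloor\log_2(n/m)\rfloor$ scales yields $\mathbf{P}[\ol{B_m}\lr\partial\ol{B_n}]\leq(1-c)^{\log_2(n/m)}=(m/n)^{\delta}$ with $\delta=-\log_2(1-c)>0$. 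The main technical obstacle is \ref{item:4}: in $\mathbb Z^2$ this bound is immediate from the existence of a closed dual circuit, whereas in the slab one must argue directly on the primal side that four short-direction non-crossings combinatorially block every path through the annulus; the gluing in \ref{item:3} is the other delicate step, but it is already subsumed by the slab gluing lemma from Section~\ref{sec:RSW}.
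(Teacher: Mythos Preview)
Your argument matches the paper's almost exactly: Items~\ref{item:4} and~\ref{item:5} are identical (the paper invokes the square-root trick for Item~\ref{item:4}, which is just your FKG-for-decreasing-events step rephrased, and then iterates over disjoint dyadic annuli for Item~\ref{item:5}). For Item~\ref{item:3}, instead of redoing the four corner gluings by hand, the paper simply cites Theorem~\ref{cor:GL}, which already packages the passage from long-rectangle crossings to annular circuits; your four-rectangle-plus-gluing outline is correct in spirit but would essentially amount to re-proving that theorem (and the delicate point---that successive local modifications do not undo earlier ones---is exactly why the proof of Theorem~\ref{cor:GL} routes through $L$-shapes and a uniqueness-of-crossing-cluster argument rather than four independent applications of Theorem~\ref{GL0}).
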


\smallskip
\noindent\textit{Remarks.}

  \begin{enumerate}[1.]
  \item Item~\ref{item:4} can be interpreted geometrically via duality: there is
    no open path from $\bar B_n$ to $\partial \bar B_{2n}$ if and only if there
    exists a blocking closed surface in the annulus $\ol{A_{n,2n}}$, made up of
    the plaquettes in the dual lattice perpendicular to the closed edges in
    $\mathbb S_k$. Therefore, Item~\ref{item:4} can be understood as the
    existence with probability at least $c$ of such a blocking surface in the
    annulus $\ol{A_{n,2n}}$.
  \item Item~\ref{item:5} implies in particular that critical percolation on the
    slab $\mathbb S_k$ does not have an infinite cluster. It strengthens the
    previous result of \cite{DST}. Moreover, our proof leads to the bound
    $\delta \ge C^{-k}$ for some $C < \infty$. Strengthening the lower bound of
    $\delta(k)$ would also be very interesting.
  \end{enumerate}

  Planar geometry is a key ingredient for the proofs of the existing box-crossing results on planar graphs. In the case
  of non-planar graphs, one side of the inequality can still be proved by
  standard renormalization arguments. Namely, the crossing probability of the
  short side of the rectangle is bounded from below. This is sketched as Lemma \ref{lem:FC2} in Section
  \ref%
  {sec:renorm-inputs}. The more difficult part is to carry out a renormalization argument to prove the crossing probability of the long side
is bounded from above (this is done in Lemma \ref{lem:FC1}), and to relate the crossing probability
  of the long side to that of the short side (done in Section  \ref{sec:RSWpf}). This is where the quasi-planarity of the slabs comes into play. For
  planar graphs such relations can be obtained by repeated use of the Harris-FKG
  inequality and one of its consequence known as the square root trick. For
  non-planar graphs, two paths may not intersect even if their projections on
  the plane do intersect. In Sections \ref{sec:gluing-lemma} we prove versions of gluing lemmas for open paths and circuits in $\mathbb{S}_{k}$.
 In Section \ref{sec:defin-dual-surf} we apply the gluing lemmas to bound the crossing probability for rectangles with different aspect ratio. Finally, we put these ingredients together
  and complete the proof of RSW type theorems in Section \ref{sec:RSWpf}, \ref{sec:rsw-theorem:-high} and \ref{sec:proof-theorem1}.

\subsection{Positive correlation and the square-root trick}
\label{sec:posit-corr-square}

In this section we recall the Harris-FKG inequality about positive
correlation of increasing events, and an important consequence called the
square-root trick. We refer to \cite{grimmett1999} for more details. A
percolation event $\mathcal A\subset \{0,1\}^E$ is said to be increasing if
\begin{equation}
  \label{eq:42}
  \left. \begin{array}{cl}
    \omega \in\mathcal A\\
    \forall e\in E,\, \omega(e)\le \eta(e)
  \end{array}
  \right\} \implies \eta\in\mathcal A.
\end{equation}
It is decreasing if $\mathcal A^c$ is increasing.

\begin{theorem}[Harris-FKG inequality]\label{thm:FKG}
  Let $p\in [0,1]$. Let $\mathcal A,\mathcal B$ be two increasing events (or two
  decreasing events), then
  \begin{equation}
    \label{eq:46}
    \Pp{\mathcal A\cap \mathcal B}\ge \Pp{\mathcal A} \Pp{\mathcal B}.
  \end{equation}
\end{theorem}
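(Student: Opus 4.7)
The plan is to prove the statement for increasing events by first upgrading to the functional version: for bounded increasing measurable $f,g:\{0,1\}^E\to\mathbb{R}$, we want $\mathbf{E}_p[fg]\ge \mathbf{E}_p[f]\,\mathbf{E}_p[g]$. Applying this with $f=\mathbf{1}_{\mathcal{A}}$ and $g=\mathbf{1}_{\mathcal{B}}$ will yield \eqref{eq:46}, and the decreasing case follows by applying the increasing case to $1-\mathbf{1}_{\mathcal{A}^c}$ and $1-\mathbf{1}_{\mathcal{B}^c}$.

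Since $\mathbf{P}_p$ is a product measure, I would first reduce to the case of a finite edge set. Any increasing event on $\{0,1\}^E$ can be approximated in $L^1(\mathbf{P}_p)$ by cylinder events $\mathcal{A}_n$ depending on only finitely many edges, and one can arrange these approximations to be increasing as well (by taking, for each $n$, the event that $\omega$ restricted to the first $n$ edges lies in the projection of $\mathcal{A}$'s finite approximations). Then $\mathbf{P}_p[\mathcal{A}\cap\mathcal{B}]=\lim_n\mathbf{P}_p[\mathcal{A}_n\cap\mathcal{B}_n]$ and similarly for the marginals, so the infinite case reduces to the finite case.

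For a finite product measure on $\{0,1\}^n$, I would prove $\mathbf{E}_p[fg]\ge \mathbf{E}_p[f]\,\mathbf{E}_p[g]$ by induction on $n$. The base case $n=1$ is the key one-line computation: if $f,g:\{0,1\}\to\mathbb{R}$ are increasing, then $(f(1)-f(0))(g(1)-g(0))\ge 0$, so expanding and integrating against the Bernoulli$(p)$ measure gives
\[
\mathbf{E}_p[fg]-\mathbf{E}_p[f]\,\mathbf{E}_p[g]=p(1-p)(f(1)-f(0))(g(1)-g(0))\ge 0.
\]
For the inductive step, condition on the last coordinate $\omega_n$: the conditional expectations $F(\omega_n):=\mathbf{E}_p[f\mid \omega_n]$ and $G(\omega_n):=\mathbf{E}_p[g\mid \omega_n]$ are increasing functions of $\omega_n$ (using the inductive hypothesis on the first $n-1$ coordinates, since fixing $\omega_n$ to a larger value can only increase the integrand pointwise). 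Applying the base case to $F,G$ together with the inductive hypothesis (which gives $\mathbf{E}_p[fg\mid \omega_n]\ge F(\omega_n)G(\omega_n)$), one concludes the induction.

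The entire argument is classical and not difficult; the only subtle point that deserves care is the reduction from the infinite product measure on $E$ to the finite case, i.e.\ checking that increasing events admit monotone cylinder approximations converging in probability. This can be handled with the standard martingale/filtration argument generated by the finite subsets of $E$, exploiting the fact that the pointwise monotone closure of an increasing cylinder event is still increasing.
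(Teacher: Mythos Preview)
The paper does not give a proof of this theorem: it is stated as standard background and the reader is referred to \cite{grimmett1999}. So there is nothing to compare against in the paper itself.

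Your argument is the classical Harris induction and is essentially correct. Two small comments. First, in the inductive step you write that $F(\omega_n)=\mathbf{E}_p[f\mid\omega_n]$ and $G(\omega_n)=\mathbf{E}_p[g\mid\omega_n]$ are increasing in $\omega_n$ ``using the inductive hypothesis on the first $n-1$ coordinates''; in fact this monotonicity is immediate from the coordinatewise monotonicity of $f$ and $g$ and does not require the inductive hypothesis. The inductive hypothesis is used exactly where you say later, namely to get $\mathbf{E}_p[fg\mid\omega_n]\ge F(\omega_n)G(\omega_n)$, and then the $n=1$ case applied to $F,G$ finishes the step. Second, your reduction from the infinite product to finite cylinders is a bit loose as written. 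A clean way to carry it out is to set $F_n=\mathbf{E}_p[\mathbf 1_{\mathcal A}\mid \mathcal F_n]$ and $G_n=\mathbf{E}_p[\mathbf 1_{\mathcal B}\mid \mathcal F_n]$ for the filtration $\mathcal F_n$ generated by the first $n$ edges; these are increasing functions of the first $n$ coordinates, the finite-$n$ functional inequality gives $\mathbf{E}_p[F_nG_n]\ge \mathbf{E}_p[F_n]\,\mathbf{E}_p[G_n]$, and martingale convergence in $L^2$ (or $L^1$ together with boundedness) passes this to the limit.
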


The following straightforward consequence, called the square-root trick, will be
very useful.
\begin{corollary}[Square-root trick]\label{cor:SRT}
  Let $\mathcal A_1,\ldots,\mathcal A_j$ be $j$ increasing events, then
  \begin{equation}
    \label{eq:47}
    \max\{\Pp{\cal A_1},\ldots,\Pp{\mathcal A_j}\} \ge 1-\left(1-\Pp{\cal
      A_1\cup\cdots\cup\cal A_j}\right)^{1/j}.
  \end{equation}
\end{corollary}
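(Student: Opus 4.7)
The plan is to derive the square-root trick directly from the Harris-FKG inequality by passing to complements. Since each $\mathcal A_i$ is increasing, the complementary event $\mathcal A_i^c$ is decreasing, and I can apply Theorem~\ref{thm:FKG} iteratively to the decreasing family $\mathcal A_1^c,\ldots,\mathcal A_j^c$.

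Concretely, I would first observe the identity $(\mathcal A_1\cup\cdots\cup\mathcal A_j)^c=\mathcal A_1^c\cap\cdots\cap\mathcal A_j^c$, and then apply Harris-FKG $j-1$ times to bound
\begin{equation}
1-\Pp{\cal A_1\cup\cdots\cup\cal A_j}=\Pp{\cal A_1^c\cap\cdots\cap\cal A_j^c}\ge\prod_{i=1}^{j}\left(1-\Pp{\cal A_i}\right).
\end{equation}
Now I would introduce $M:=\max\{\Pp{\cal A_1},\ldots,\Pp{\cal A_j}\}$. Since $1-\Pp{\cal A_i}\ge 1-M$ for every $i$, the product on the right-hand side is at least $(1-M)^j$, giving
\begin{equation}
(1-M)^j\le 1-\Pp{\cal A_1\cup\cdots\cup\cal A_j}.
\end{equation}

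Finally I would take $j$-th roots (both sides are in $[0,1]$, so this is monotone) and rearrange to obtain
\begin{equation}
M\ge 1-\left(1-\Pp{\cal A_1\cup\cdots\cup\cal A_j}\right)^{1/j},
\end{equation}
which is exactly~\eqref{eq:47}. There is no real obstacle here: the only step that needs a line of justification is the iterated application of FKG to a family of decreasing events, which is an immediate induction on $j$ from Theorem~\ref{thm:FKG} (grouping $\mathcal A_1^c\cap\cdots\cap\mathcal A_{j-1}^c$, which is again decreasing as an intersection of decreasing events, against $\mathcal A_j^c$).
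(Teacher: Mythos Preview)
Your proof is correct and is exactly the standard derivation the paper has in mind: the paper does not spell out a proof but simply calls Corollary~\ref{cor:SRT} a ``straightforward consequence'' of the Harris-FKG inequality, and your argument---passing to the decreasing complements, applying Theorem~\ref{thm:FKG} iteratively to get $\Pp{\bigcap_i\mathcal A_i^c}\ge\prod_i(1-\Pp{\mathcal A_i})\ge(1-M)^j$, then taking $j$-th roots---is precisely that straightforward consequence.
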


\subsection{Gluing Lemmas}

\label{sec:gluing-lemma}

Define $\mathcal{H}$ to be the set of continuous and
strictly increasing functions $h:[0,1]\rightarrow \lbrack 0,1]$. Clearly,
given $h_{1},h_{2}\in \mathcal{H}$, we have $h_{1}h_{2}\in \mathcal{H}$, $%
h_{1}^{-1}\in \mathcal{H}$, and $1-h_{1}^{-1}\left( 1-\cdot \right) \in
\mathcal{H}$. We sometimes denote by $\mathsf{h}$ a function in $\mathcal{H}$ that may change from line to line.

In order to state the gluing lemmas we need to fix an ordering $\prec $ on the
vertices of $\mathbb{S}_{k}$. The choice of the ordering is flexible; ours
is the following. Given $x,y\in \mathbb{S}_{k}$, we write $x\prec
y $ iff

\begin{itemize}
\item $\left\vert x\right\vert <\left\vert y\right\vert $, or

\item $\left\vert x\right\vert =\left\vert y\right\vert $, and there exists $%
k$ such that $x_{i}=y_{i}$ for $i<k$, and $x_{k}<y_{k}$.
\end{itemize}

We order directed edges and more generally, site self-avoiding paths of
$\mathbb{S}_{k}$ by taking the corresponding lexicographical order, as in
Section 2.3 of \cite{DST}. Let $S$ be a connected subset of $\mathbb S_k$. For
$A,B,S\subset \mathbb S_k$, the event $A \lr[S] B$ denotes the existence of a
path of open edges in $S$ connecting $A\cap S$ to $B\cap S$. If this event
occurs, define $\Gamma _{\mathrm{min}}^S(A,B)$ to be the minimal (for the order
defined above) open self-avoiding path in $S$ from $A$ to $B$. Set
$\Gamma^S_{\mathrm{min}}(A,B)=\emptyset$ if there is no open path from $A$ to
$B$ in $S$. Note that $\Gamma^S _{\mathrm{min}}(A,B)$ is defined relative to a
fixed $S$. Sometimes we will also use the definitions above with $A$ and $B$
random sets (they may depend on the configuration $\omega$).

We will repeatedly use the following combinatorial lemma stated in \cite{DST}.

\begin{lemma}
\label{combi0}
Let $s,t>0$. Consider two events $\cal A$ and $\cal B$ and a map $\Phi$ from
$\cal A$ into the set $\mathfrak{P}(\cal B)$ of subevents of $\cal B$. We assume
that:
  \begin{enumerate}
  \item for all $\omega\in \cal A$, $|\Phi(\omega)|\geq t$,
  \item for all $\omega' \in \cal B$, there exists a set $S$ with less than
  $s$ edges such that $\{\omega: \omega' \in \Phi(\omega)
    \} \subset \{\omega : \omega_{|_{S^c}}=\omega'_{|_{S^c}}\}$.
  \end{enumerate}
  Then,
  \begin{equation}
     \Pp{\cal A}\leq\frac1t \left(\frac2{\min\{p,1-p\}}\right)^s \P{\cal B}.
  \end{equation}
\end{lemma}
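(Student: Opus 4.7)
The plan is to proceed by a double counting argument with change of measure, which is standard for this kind of combinatorial inequality. I would like to compute the quantity $\sum_{\omega} \mathbf{1}_{\mathcal{A}}(\omega) |\Phi(\omega)| \mathbf{P}_p(\omega)$ in two different ways. On one hand, by assumption (1), this sum is bounded below by $t \cdot \mathbf{P}_p(\mathcal{A})$. On the other hand, by Fubini, it equals
\begin{equation}
\sum_{\omega' \in \mathcal{B}} \sum_{\omega \in \mathcal{A}\,:\, \omega' \in \Phi(\omega)} \mathbf{P}_p(\omega),
\end{equation}
so the task reduces to showing that, for each fixed $\omega' \in \mathcal{B}$, the inner sum is bounded by $(2/\min\{p,1-p\})^s \, \mathbf{P}_p(\omega')$.

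For the second step, I apply assumption (2) to $\omega'$, which yields a set $S = S(\omega')$ with $|S| < s$ such that every $\omega$ counted in the inner sum must agree with $\omega'$ on $S^c$. There are at most $2^{|S|} \le 2^s$ configurations (restricted to the underlying finite edge set) that agree with $\omega'$ outside $S$. For each such $\omega$, the product form of $\mathbf{P}_p$ gives the Radon--Nikodym estimate
\begin{equation}
\frac{\mathbf{P}_p(\omega)}{\mathbf{P}_p(\omega')} = \prod_{e \in S} \left(\frac{p}{1-p}\right)^{\omega(e)-\omega'(e)} \le \left(\frac{1}{\min\{p,1-p\}}\right)^{|S|},
\end{equation}
since the worst case is flipping each edge in $S$ to its less likely state. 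Multiplying the pointwise bound by the count $2^{|S|}$ of eligible $\omega$'s yields the inner-sum bound $(2/\min\{p,1-p\})^s \, \mathbf{P}_p(\omega')$.

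Summing over $\omega' \in \mathcal{B}$ gives the right-hand side $(2/\min\{p,1-p\})^s \, \mathbf{P}_p(\mathcal{B})$, and combining with the lower bound $t \cdot \mathbf{P}_p(\mathcal{A})$ from the first step produces the claimed inequality after dividing by $t$. I do not anticipate a serious obstacle: condition (1) supplies the multiplicity needed to absorb a factor $t$, condition (2) controls the preimage size via a bounded-range ``witness set'' $S$, and the Bernoulli product structure converts both pieces into the final constant $(2/\min\{p,1-p\})^s$. The only subtlety is that the argument is implicitly carried out on a finite edge set (e.g., all edges touching a large box containing the support of $\mathcal{A}$ and $\mathcal{B}$) so that the cardinality $|\Phi(\omega)|$ and the point probabilities $\mathbf{P}_p(\omega)$ are well defined; the conclusion then extends to the infinite-volume measure by a standard limiting argument, since the bound does not depend on the size of that finite set.
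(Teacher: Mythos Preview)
Your proof is correct and is essentially the standard double-counting/change-of-measure argument for this lemma. Note that the paper itself does not prove Lemma~\ref{combi0}: it is quoted from \cite{DST}. The paper does, however, prove the continuous-label extension Lemma~\ref{combi}, and the idea there is the same as yours---partition the domain according to which edges get modified (at most $2^s$ pieces) and bound the Radon--Nikodym derivative (Jacobian) of the modification on each piece by $(\min\{p,1-p\})^{-s}$ (respectively $(a\wedge(1-b))^{-s}$). Your Fubini/summation formulation is just the discrete Bernoulli counterpart of that Jacobian computation, so there is no meaningful methodological difference to report.
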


We now state two gluing lemmas for open paths crossing subsets of rectangular regions
 of the form $R\times[k]$, with $R$ a topological rectangle. The first, Theorem \ref{GL0}, has the simplest geometry
and will be proved as a consequence of essentially the same arguments used for Theorem \ref{GL}, which has a more complicated geometry.

As in \cite{DST}, the proof of these gluing lemmas uses local modifications of
percolation configurations, which rely on the following definition.
\begin{definition}\label{def:rad1}
  Define an integer $r\geq 3$ such that for every $s\ge r $ and every $z\in L
  \dot{=} \mathbb{Z}_{+}^{2}\times [k] \setminus \{(0,0,0),(0,0,k)\} $, the following holds.
 For any three distinct neighbors $u,v,w$ of $z$, and any three distinct
    sites $u',v',w'$ (that are also distinct from $u,v,w$) on the boundary of $%
    \ol{B_{s}\left( z\right)} \cap L$, there exist three disjoint self-avoiding
    paths in $\ol{B_{s}\left( z\right)} \cap L\backslash \left\{ z\right\} $
    connecting $u$ to $u^{\prime }$, $v$ to $v^{\prime }$ and $w$ to
    $w^{\prime}$.
  \end{definition}
In the case of slabs with $k\geq 1$, it suffices to take $r=3$. We will present
the proof with general $r$, since that can be adapted to the more general
quasi-planar graphs $\mathbb{Z}^{2} \times F$, with $F$ a finite connected
graph.

\begin{theorem}
  \label{GL0} Let $r\ge3$ be as in Definition~\ref{def:rad1}. Fix $\eps>0$ and
  $k \geq 1$. There exists $\mathsf{h_{0}}\in \mathcal{H}$ such that the
  following holds. Let $S$ be a subset of $\mathds S_{k}$ of the form
  $\ol{[a,b]\times[c,d]}$, with $b-a \geq r+2$, $d-c \geq r+2$. Let $A,B,C,D$ be
  four subsets of $\partial S$ such that their projections on $\mathds{Z}^{2}$
  are disjoint, and such that the projection on $%
  \mathds{Z}^{2}$ of any path from $A$ to $B$ in $S$ intersects the projection
  of any path from $C$ to $D$ in $S$. Then for every $p\in[\eps,1-\eps]$,
\begin{equation}
\label{eq:190}
       \Pp{C\lr[S] A} \ge \mathsf{h}_0( \Pp{A\lr[S] B}\wedge   \Pp{C\lr[S]D}).
  \end{equation}
\end{theorem}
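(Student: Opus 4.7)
The strategy is the standard two-step one of \cite{DST}: Harris--FKG gives a lower bound on the joint event $\cal A=\{A\lr[S]B\}\cap \{C\lr[S]D\}$, and a local surgery then converts this joint event into $\{C\lr[S]A\}$ at a quantified cost controlled by Lemma~\ref{combi0}. Since $\{A\lr[S]B\}$ and $\{C\lr[S]D\}$ are both increasing, Theorem~\ref{thm:FKG} yields
\[
  \Pp{\cal A}\;\ge\;\Pp{A\lr[S]B}\,\Pp{C\lr[S]D}\;\ge\;\bigl(\Pp{A\lr[S]B}\wedge \Pp{C\lr[S]D}\bigr)^{2},
\]
so it suffices to show $\Pp{\cal A}\le c(\eps,k)^{-1}\,\Pp{C\lr[S]A}$ for some positive constant $c(\eps,k)$; then $\mathsf{h}_0(x)=c(\eps,k)\,x^{2}$ lies in $\cal H$ and satisfies \eqref{eq:190}.

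On $\cal A$ the minimal open self-avoiding paths $\gamma_1=\Gamma_{\mathrm{min}}^S(A,B)$ and $\gamma_2=\Gamma_{\mathrm{min}}^S(C,D)$ are well defined, and by the topological hypothesis on $A,B,C,D$ their projections satisfy $\pi(\gamma_1)\cap \pi(\gamma_2)\ne \emptyset$. Let $z=z(\omega)$ be the $\prec$-first vertex of this intersection whose $r$-neighbourhood in $\mathds Z^{2}$ is contained in the interior of $\pi(S)$; such a $z$ exists because the projections of $A,B,C,D$ are pairwise disjoint and $b-a,d-c\ge r+2$. The surgery $\Phi(\omega)$ is then obtained from $\omega$ by a suitable local modification inside the ball $\ol{B_r(z)}\cap S$: using the disjoint three-path property of Definition~\ref{def:rad1}, one opens (resp.\@ closes) a prescribed bounded set of edges inside the ball so as to join the external arcs of $\gamma_1\cup \gamma_2$ into a single open cluster containing $A,B,C,D$. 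Thus $\Phi(\omega)\in \{C\lr[S]A\}$ and the number of modified edges is at most $s=O(r^{2}k)$.

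It remains to apply Lemma~\ref{combi0} with $t=1$ and the above $s$. The main obstacle is verifying the preimage hypothesis: for each $\omega'$ in the image of $\Phi$ one must exhibit a set of at most $s$ edges, determined by $\omega'$ alone, containing every edge modified by $\Phi$. A naive surgery double-counts, because distinct $\omega$'s with distinct surgery positions can produce the same $\omega'$. Following \cite{DST}, this is resolved by designing the prescribed local pattern so that $z$ can be read off from $\omega'$ intrinsically --- for instance, as the $\prec$-first vertex such that after reverting the prescribed pattern in $\ol{B_r(z)}\cap S$ the resulting configuration still contains minimal open paths between $A,B$ and between $C,D$ whose projections meet at $z$. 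The three-path property of Definition~\ref{def:rad1} is precisely what makes this re-identification possible (and justifies the size hypothesis $b-a,d-c\ge r+2$), since it guarantees that the rerouting of external arcs inside the ball is insensitive to the configuration outside the ball. Granting this, Lemma~\ref{combi0} delivers $\Pp{\cal A}\le (2/\eps)^{s}\,\Pp{C\lr[S]A}$, which combined with the FKG step above completes the proof.
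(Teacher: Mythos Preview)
Your overall strategy (Harris--FKG to get the joint event, then a local surgery bounded via Lemma~\ref{combi0}) is the same as the paper's, and your function $\mathsf h_0(x)=c\,x^2$ does lie in $\mathcal H$ as defined. The paper in fact proves the FKG step exactly as you do and then reduces to the inequality $\Pp{C\lr[S]A}\ge \mathsf h_0(\Pp{C\lr[S]\bar\Gamma})$ with $\Gamma=\Gamma_{\min}^S(A,B)$, which is a slight variant of Theorem~\ref{GL}. So the structure is right; the issue is the surgery/reconstruction step, where your argument is not complete.

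The gap is the recovery of the surgery site $z$ from $\omega'$. Your proposal---take the $\prec$-first vertex at which ``reverting the prescribed pattern'' reproduces intersecting minimal paths---is circular: you do not know what the original configuration inside $\ol{B_r(z)}$ was, so ``reverting'' is not well defined, and even granting some canonical reversion there is no reason the $\prec$-first vertex passing the test coincides with the actual surgery site. Concretely, two different $\omega_1,\omega_2\in\mathcal A$ with distinct surgery sites $z_1\ne z_2$ can map to the same $\omega'$, so the set $S(\omega')$ required by Lemma~\ref{combi0} would have to contain both $\ol{B_r(z_1)}$ and $\ol{B_r(z_2)}$; since such collisions are not a priori controlled, you cannot bound $s$ uniformly. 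A second, smaller issue: working with \emph{two} minimal paths $\gamma_1,\gamma_2$ means the ball around $z$ is pierced by four arcs, whereas Definition~\ref{def:rad1} only supplies three disjoint reroutings, so the local pattern you would need is not immediately available.

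The paper avoids both difficulties by routing through a \emph{single} minimal path. It works on the event $\{C\lr[S]\bar\Gamma\}\setminus\{C\lr[S]A\}$ (so that no vertex of $\Gamma$ is yet connected to $C$ off $\Gamma$), takes any $z\in\bar\Gamma$ reachable from $C$, and uses the three-path property to reroute $\Gamma$ through $z$ while attaching the $C$-arm there. The careful choice of neighbours with $v\prec w$ forces the new minimal path $\Gamma(\omega')$ to agree with $\Gamma(\omega)$ outside the ball and to pass through $z$; then $z$ is \emph{intrinsically} the unique vertex of $\Gamma(\omega')$ connected to $C$ without using an edge of $\Gamma(\omega')$. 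This is what makes the Lemma~\ref{combi0} hypothesis verifiable. If you want to repair your write-up, the cleanest fix is to drop $\gamma_2$ entirely: observe that $\{A\lr[S]B\}\cap\{C\lr[S]D\}\subset\{C\lr[S]\bar\Gamma\}$, and then carry out the surgery of Theorem~\ref{GL} (with $R=S$) to pass from $\{C\lr[S]\bar\Gamma\}$ to $\{C\lr[S]A\}$.
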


Roughly speaking, when both $A\lr[S] B$ and $C\lr[S]D$ occur with uniformly
positive probability, so does $C\lr[S] A$. If both $A\lr[S] B$ and $C\lr[S]D$
occur with high probability, then so does $C\lr[S] A$.

The proof of Theorem \ref{GL0} (see below) is a slightly modified version of the proof of the following theorem.

\begin{theorem}[Main gluing lemma for paths]
\label{GL}
Let $r\ge 3$ be  as in Definition~\ref{def:rad1}. Fix $\eps>0$ and $k\geq 1$. There
exists $\mathsf{h_{0}}\in \mathcal{H}$ such that the following holds. Let $S, R$ be two subsets of $\mathbb S_k$ of the form $\ol{[a,b]\times[c,d]}$,
with $b-a\geq r+2$, $d-c\geq r+2$. Let $A,B\subset \partial S$ and $C\subset
\partial R$ be such that the projections of $A,B,C$ on $\mathds{Z}^{2}$ are at least sup-norm distance $r+2$ apart from each other. Then for every $p\in[\eps,1-\eps]$,
\begin{equation}
    \label{eq:19}
    \Pp{C\lr[R] A}\ge \mathsf{h}_0( \Pp{C\lr[R] \mathcal{N}(\bar \Gamma, r)}),
  \end{equation}
where $\Gamma=\Gamma_{\mathrm{min}}^S(A,B)$ and $\mathcal{N}(\bar \Gamma, r) = \{ x \in S : \mathrm{dist}^{*}(x, \Gamma) \leq r\}$.
\end{theorem}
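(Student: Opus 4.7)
The plan is to apply Lemma \ref{combi0} with the easy event $\cal A:=\{C\lr[R]\mathcal{N}(\bar\Gamma,r)\}$ and the hard event $\cal B:=\{C\lr[R]A\}$, exhibiting a surgery map $\Phi:\cal A\to\mathfrak P(\cal B)$ whose images differ from their preimages on at most $s=s(r,k)$ edges. The lemma then delivers $\Pp{\cal A}\le (2/\eps)^{s}\,\Pp{\cal B}$, which is \eqref{eq:19} with $\mathsf h_0(x)=(2/\eps)^{-s}x$, a function in $\mathcal H$.

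Given $\omega\in\cal A$, set $\gamma=\Gamma_{\min}^S(A,B)$, let $\sigma=\Gamma_{\min}^R(C,\mathcal{N}(\bar\gamma,r))$, and denote its terminal vertex on $\mathcal{N}(\bar\gamma,r)$ by $z$. By definition of the $r$-neighborhood there exists $y\in\gamma$ with $\mathrm{dist}^*(z,y)\le r$; choose one such $y$ canonically (smallest for our ordering). Using the three-disjoint-paths property of Definition \ref{def:rad1} (after translating the ball around $z$ to a translate of the reference region $L$), one finds a self-avoiding path $\tau$ from $z$ to $y$ inside $\ol{B_r(z)}\cap\mathbb S_k$ that is disjoint from the two preexisting paths $\sigma$ and $\gamma$ at their points of entry to this ball. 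The surgery $\Phi(\omega)$ opens every edge along $\tau$, leaving all others unchanged. The concatenation of $\sigma$, $\tau$ and the sub-path of $\gamma$ from $y$ to $A$ is then an open path in $R$ from $C$ to $A$, so $\Phi(\omega)\in\cal B$. The separation hypothesis (projections of $A,B,C$ pairwise at sup-norm distance $\ge r+2$) is used both to force the surgery ball to remain inside $R$ and to stay away from $B$, and to guarantee that the relevant sub-path of $\gamma$ used after the surgery lies in $R$.

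To apply Lemma \ref{combi0} with $t=1$, I take the modification window $\Lambda$ in the lemma to be the edge set of $\tau$, with $|\Lambda|\le s=s(r,k)$ uniformly in $\omega$. The preimage condition requires that, from $\omega'=\Phi(\omega)$ alone, one can identify a bounded list of candidates for $\Lambda$; this I would do by re-running the definitions of $\gamma,\sigma,z,y$ on $\omega'$ and using the order-minimality of these objects to pin down the window up to a constant ambiguity. The slightly simpler setting of Theorem \ref{GL0} is recovered by exactly the same template, which is why we can treat the two statements in parallel.

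The hardest step, I expect, is precisely this reconstruction. Since $\Gamma$ is a global functional of the configuration on the rectangle $S$ and the window $\Lambda$ may intersect $S$, opening the edges of $\tau$ can alter the identity of $\Gamma$, and thereby the interpretation of $z$ inside $\omega'$. Resolving this follows the strategy of \cite{DST}: the local nature of the modification, combined with the lexicographic minimality of $\Gamma$, forces any change in $\Gamma$ caused by the surgery to be confined to a neighborhood of $\tau$, and the flexibility offered by the three-path property of Definition \ref{def:rad1} allows $\tau$ to be chosen so that this confinement is manageable. Once this point is handled, the remaining bookkeeping and the invocation of Lemma \ref{combi0} are routine extensions of the ingredients developed in \cite{DST}.
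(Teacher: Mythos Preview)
There are two genuine gaps.

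First, your $\mathsf h_0(x)=(\eps/2)^{s}x$ is not in $\mathcal H$ in the sense the paper uses: the class is meant to consist of increasing homeomorphisms of $[0,1]$ (the paper relies on $h^{-1}\in\mathcal H$, and on $\mathsf h_0(1-\delta)\to1$ in the high-probability RSW of Theorem~\ref{thm:RSWhp}), so one needs $\mathsf h_0(1)=1$. The paper's proof is structured precisely around this point. A second, easy part does give the linear bound $h_2(x)=c_0x$ much as you propose, but a separate first part (Facts~1 and~2) handles the regime $x\to1$: one splits according to the cardinality of the set $U(\omega)$ of candidate gluing points along $\bar\Gamma$; when $|U|\le t$ a \emph{disconnecting} map yields $\Pp{\mathcal X,|U|\le t}\le C_1^t(1-x)$, and when $|U|>t$ the gluing surgery gains a factor $1/t$. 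Balancing $t$ against $1-x$ forces the bound to tend to $1$ as $x\to1$. Your proposal has no mechanism for this regime, and without it the conclusion $\mathsf h_0\in\mathcal H$ fails.

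Second, the reconstruction step---which you correctly flag as the crux---does not work as sketched, because you only \emph{open} edges. The paper's surgery (applied on $\mathcal X=\{C\lr[R]\mathcal N(\bar\Gamma,r)\}\cap\{C\lr[R]A\}^c$ rather than on all of $\{C\lr[R]\mathcal N(\bar\Gamma,r)\}$) first \emph{closes} every edge of the ball $\ol{B_{r+2}(z)}$ except those on $\Gamma$ and on a chosen approach path $\pi$ from $C$, and only then opens three disjoint paths inside the ball, wired through $z$ via edges $(z,u),(z,v),(z,w)$ with $v\prec w$. Definition~\ref{def:rad1} is used here in full---to route three disjoint paths, not a single bridge $\tau$. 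The closing is exactly what makes $z$ recoverable from $\omega'$: it forces $\Gamma(\omega')$ to agree with $\Gamma(\omega)$ outside the ball, and makes $z$ the \emph{unique} vertex of $\Gamma(\omega')$ connected to $C$ off $\Gamma(\omega')$. With only opening, nothing prevents many distinct $\omega$ (with surgery balls at different locations) from producing the same $\omega'$, so you cannot exhibit a bounded set $S(\omega')$ as Lemma~\ref{combi0} requires. In particular, your plan to re-run the definitions of $\gamma,\sigma,z,y$ on $\omega'$ fails: once $C\lr[R]A$ in $\omega'$, the minimal path from $C$ to $\mathcal N(\bar\Gamma,r)$ need have nothing to do with the original $\sigma$, and opening $\tau$ may also alter $\Gamma_{\min}^S(A,B)$.
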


\noindent\textit{Remarks.}
\begin{enumerate}[1.]
\item We note that in the simpler case of the plane (when $k = 0$), the FKG
  inequality implies that the left hand side of \eqref{eq:190} is greater than
  or equal to
\[ \mathbf{P}_{p}\left[ A\lr[S]B\right] \mathbf{P}_{p}\left[  C\lr[S]D\right], \]
 and thus
  \eqref{eq:190} is valid with $\mathsf{h}_0(x) = x^{2}$.
\item\label{item:6} For better readability of the proof we have not presented \mbox{Theorem
    \ref{GL}} in the highest level of generality. In particular the sets $R$ and
  $S$ in the statements of Theorem~\ref{GL0} and Theorem~\ref{GL} do not need
  to be rectangles. The proof also applies if the sets $R$ and $S$ are of the
  form $\ol{T}$, where $T\subset \mathbb Z^2$ is a rectilinear domain such that $\partial T$ is a simple circuit made of 
 vertical/horizontal segments of length at least $ r+2$, and such that any two disjoint segments are at least sup-norm 
distance $r+2$ apart from each other.

\item In the proof we will see that the function $\mathsf h_0\in\mathcal H$ can be chosen in
  such a way that $\mathsf h_0(x)\ge c_0 x$, where $c_0$ is a constant that
  depends only on $\eps$ and $k$. This remark will be important in the
  proof of Theorem~\ref{cor:GL}.
\end{enumerate}

\begin{proof} [Proof of Theorem \ref{GL}] The proof consists of two parts. In
  the first part, we prove that for some $\delta >0 $, there is an
  $h_{1}:\left[0 ,1\right] \rightarrow [0,1]$ that is continuous on $[0,1]$,
  strictly increasing on $[1-\delta,1]$, and with $h_{1}\left( 1\right) =1$, such
  that (\ref{eq:19}) is valid with $\mathsf{h}_0$ replaced by $h_{1}$. In the
  second part, we show that (\ref{eq:19}) is valid with $\mathsf{h}_0$ replaced
  by $h_{2}\left( x\right) =c_{0}x$, $x\in \left[ 0,1\right] $, for some
  $c_{0}>0$ (which depends only on $\eps$ and $k$). Therefore one can take
  $\mathsf{h_{0}}\left( x\right) =\max \left\{ h_{1}\left( x\right)
    ,c_{0}x\right\} $, which is indeed in $\mathcal{H}.$ The proof of the
  first part is very similar to the argument in Section 2.3 of \cite{DST}, and
  we next outline those arguments, with details supplied to show that $h_{1}$ is
  continuous and strictly increasing.

  For the proof below we will slightly abuse notation and use
  $\ol{B_{s}(z)}$ to denote $\ol{B_{s}(z)} \cap S$.

  Following Section 2.3 of \cite{DST}, we define $U(\omega)$, $%
  \omega \in \left[ 0,1\right] ^{\mathbb{S}_{k}}$ to be the set of vertices $z
  \in S$ such that

\begin{itemize}
\item $z \in \bar \Gamma$, and
\item $\overline{B_{r+1}\left( z\right) }$ is connected to $C$ in $R$
by an open path $\pi $, such that $\mathrm{dist}^{*}(\pi, \bar \Gamma)= r+1$.
\end{itemize}

We discuss two different cases below, depending on the cardinality of $U(\omega)$. We will use the following two events:
\begin{equation*}
\mathcal{X}=\left\{ C\lr[R] \mathcal{N}(\bar \Gamma, r)\right\} \cap \left\{ C\lr[R] A \right\} ^{c}
\end{equation*}%
and
\begin{equation*}
\mathcal{X}^{\prime }=\left\{ C\lr[R] \mathcal{N}(\bar \Gamma, r)\right\} \cap \left\{ C\lr[R]A \right\} .
\end{equation*}
Our object is basically to show that $\Pp{\mathcal{X}}/\Pp{\mathcal{X}^{\prime}}$ is small, at least when $\Pp{\mathcal{X}}+ \Pp{\mathcal{X}^{\prime}} = \Pp{C\lr[R] \mathcal{N}(\bar \Gamma, r)}$ is not small.

\textbf{Fact 1.} There exists $C_{1}<\infty $, depending only on $\eps$, such
that for any $t >0$,
\begin{equation*}
\mathbf{P}_{p}\left[ \mathcal{X}\cap \left\{ \left\vert U\right\vert
\leq t\right\} \right] \leq (C_{1})^{\/t}\mathbf{P}_{p}\left[ \left(C\lr[R] \mathcal{N}(\bar \Gamma, r)\right) ^{c}\right] .
\end{equation*}

We prove this statement by constructing a disconnecting (or anti-gluing) map
\begin{equation*}
\Phi :\mathcal{X}\cap \left\{ \left\vert U\right\vert \leq t\right\} \rightarrow
\left\{C\lr[R] \mathcal{N}(\bar \Gamma, r)\right\} ^{c},
\end{equation*}%
such that for any $\omega ^{\prime }$ in the image of $\Phi $, the
cardinality of its pre-image is bounded by a constant depending only on $t$.

We define $\Phi(\omega) $ by closing for every $z\in U( \omega)$, all the edges
adjacent to a vertex in $\overline{B_{r}(z)}$ which are not in $\Gamma$. Observe
that $\Phi(\omega)$ cannot contain any open path from $C$ to $\mathcal{N}(\bar
\Gamma, r)$. Let $\vert \ol{B_{r+1}}\vert $ denote the number of edges in
$\ol{B_{r+1}}$. Lemma~\ref{combi0} can be applied with $s= 2 t \vert
\ol{B_{r+1}} \vert $ to yield

\begin{equation*}
\mathbf{P}_{p}\left[ \mathcal{X}\cap \left\{ \left\vert U\right\vert
\leq t\right\} \right] \leq \left( \frac{2}{p\wedge \left( 1-p\right) }\right)
^{2t\vert\ol{ B_{r+1}}\vert }\mathbf{P}_{p}\left[ \left( C\lr[R] \mathcal{N}(\bar \Gamma, r)\right) ^{c}\right] ,
\end{equation*}%
and we can conclude the proof of Fact 1 with $C_{1}=(2/\eps)^{2\vert \ol{B_{r+1}} \vert}$.

\textbf{Fact 2.} There exists $C_{2}<\infty $, depending only on $\eps$ and $k$, such that for any $t>8$,
\begin{equation*}
\mathbf{P}_{p}\left[ \mathcal{X}\cap \left\{ \left\vert U\right\vert
>t\right\} \right] \leq \frac{C_{2}}{t-8}\mathbf{P}_{p}\left[ \mathcal{X}%
^{\prime }\right].
\end{equation*}

We prove Fact 2 by constructing a map
\begin{equation*}
\Phi :\mathcal{X}\cap \left\{ \left\vert U\right\vert >t\right\} \rightarrow
\mathfrak{P}(\mathcal{X}^{\prime }),
\end{equation*}%
such that for any $\omega ^{\prime } \in \mathcal{X}^{\prime }$, the $\omega$\text{'s} with
$\omega ^{\prime } \in \Phi (\omega)$ agree on all but at most $s$ specified edges, with $s$
a constant depending only on $k$.

The construction of $\Phi $ is similar to that in \cite{DST}, as we now
describe. For any $z\in U(\omega)$ that is not one of the eight corners of $S$, we will construct a new
configuration $\omega ^{\left( z\right) }$ and define $\Phi (\omega) = \{ \omega^{(z)}:
z \in U(\omega)\text{, $z$ is not a corner of } S\}$. The new configuration $\omega^{(z)}$ is constructed by the following three steps.

\begin{enumerate}
\item Define $u^{\prime },v^{\prime }$
to be respectively the first and last vertices (when going from $A$ to $B$)
of $\Gamma(\omega)$ which are in $\overline{B_{r+1}\left(
z\right) }$. Choose $w^{\prime }$ on the boundary of $%
\overline{B_{r+1}\left( z\right) }$, such that there exists an open self-avoiding
path $\pi$ (which could be a singleton) from $w^{\prime }$ to $C$. By the definition of $U\left( \omega \right) $ and $%
\mathcal{X}$, $w^{\prime }$ is distinct from $u^{\prime }$, $z$
and $v^{\prime }$. Choose $u,v,w$ such that $\left( z,u\right) ,\left( z,v\right)$ and
$\left( z,w\right) $ are three distinct edges with $%
v \prec w $. If $z =u$ or $z$ is a neighbor of $u$, we simply take $u = u'$, and if $z =v$, we take
$v=v'=z$. Otherwise, $u,v,w$ are chosen to be distinct sites from $u', v',w'$.
(Note that this is possible
because for $z$ that is not a corner of $S$, the degree of $z$ is at least $4$. And since $A, B,C$ are at least distance
$r+2$ apart, at most one of $u', v',w'$ can be a neighbor of $z$).

\item Close all edges of $\omega $ in $\overline{B_{r+2}\left( z\right) }$ except the edges of $\overline{B_{r+2}\left( z\right) }\backslash
\overline{B_{r+1}\left( z\right) }$ which are in $\Gamma(\omega)$ or $\pi$.

\item Open the edges $\left( z,u\right) ,\left( z,v\right) ,\left( z,w\right) $,
  together with three disjoint self-avoiding paths $\gamma _{u},\gamma
  _{v},\gamma _{w}$ inside $\ol{B_{r+1}(z)}$ connecting $u$ to $u^{\prime }$,
  $v$ to $v^{\prime }$ and $w$ to $w^{\prime } $.
\end{enumerate}

By construction, $\omega^{\left( z\right) }\in \mathcal{X}^{\prime }$. Now given
$\omega ^{\prime }$ in the image of $\Phi$, by the same argument as in
\cite{DST}, $z$ is the only site in the new minimal path $\Gamma (\omega^{(z)})$
(from $A$ to $B$) that is connected to $C$ without using any edge in $\Gamma
(\omega^{(z)})$. Since $C \nlr[R] A$, the path $\Gamma(\omega^{(z)})$ agrees
with $\Gamma(\omega)$ up to $u^{\prime}$. Then, because $v$ is minimal among $u,
v, w, x$, $\Gamma(\omega^{(z)})$ still goes through $v^{\prime}$, and then
agrees with $\Gamma(\omega)$ from $v^{\prime}$ to the end. Therefore $\omega
^{\prime }=\omega ^{\left( z\right) }$ for some $z$ that can be uniquely
determined by $\omega^{\prime }$. Thus, the number of edges in $\{ \omega:
\text{such that } \omega ^{\prime } \in \Phi (\omega)\}$ that can vary is
bounded by the number of edges in $\overline{B_{r+2}( z) }$. This shows that
$\Phi $ satisfies the conditions of Lemma~\ref{combi0}, with
$s=\vert\overline{B_{r+2}}\vert $. This proves Fact 2 with $C_{2}=(2/\eps)
^{\vert \overline{B_{r+2}}\vert }$.

To complete the first part of the proof, we set $x =\mathbf{P}_{p}\left[
  C\lr[R] \mathcal{N}(\bar \Gamma, r)\right]$, and combine
Facts 1 and 2 to construct $h_{1}$. Notice that
\begin{equation}
\mathbf{P}_{p}\left[ \mathcal{X}\right] +\mathbf{P}_{p}\left[ \mathcal{X}%
^{\prime }\right] =x,  \label{sum}
\end{equation}%
and Fact 1 implies $\mathbf{P}_{p}\left[ \mathcal{X}\cap \left\{ \left\vert
U\right\vert \leq t\right\} \right] \leq (C_{1})^{t}\left( 1-x\right) $. Together
with Fact 2, this implies that%
\begin{equation*}
\mathbf{P}_{p}\left[ \mathcal{X}^{\prime }\right] \geq \frac{%
x-(C_{1})^{t}\left( 1-x\right) }{1+C_{2}/(t-8)}.
\end{equation*}%
Setting $t=\log \left\vert \log(1-x) \right\vert$ in the equation above, one can
easily construct $\delta>0$ and a function $h_1:[1-\delta,1]\to [0,1]$ that is
continuous, strictly increasing on $ [1-\delta,1]$, with $h_1(1-\delta)=0$ and
$h_1(1)=1$.
Set then $h_{1}\left( x\right) =0$, for $x<1-\delta$. This ends the first part
of the proof.

For the second part, we claim that for all $x\in \left[ 0,1\right] $, one can take $%
h_{2}\left( x\right) =c_{0}x$. Indeed, we can construct a map $\Phi :%
\mathcal{X}\rightarrow \mathfrak{P}(\mathcal{X}^{\prime })$ by repeating the same construction as
in the proof of Fact 2 but without needing to consider the cardinality of $U$. Since for any $\omega
^{\prime } \in \mathcal{X}^{\prime }$, the number of edges in $\{ \omega:
\omega ^{\prime } \in \Phi (\omega)\}$
that can vary is bounded, this gives $\mathbf{P}_{p}\left[ \mathcal{X}\right]
\leq C_{3}\mathbf{P}_{p}\left[ \mathcal{X}^{\prime }\right] $. Together with
\eqref{sum}, we conclude that we can take $h_{2}\left( x\right) =\left( 1+C_{3}\right)
^{-1}x$.
\end{proof}

\begin{proof}[Proof of Theorem \ref{GL0}]
  Set $\Gamma=\Gamma^S_{\rm min}(A,B)$. To see how the proof of Theorem \ref{GL}
  implies Theorem \ref{GL0}, we first note that the assumption in Theorem
  \ref{GL} that the projections of $A,B,C$ on $\mathds{Z}^{2}$ are at least
  sup-norm distance $r+1$ apart is used to deal with the issue of the
  $r$-neighborhood of $\bar \Gamma$ in \eqref{eq:19}. Indeed, as long as their
  projections on $\mathds{Z}^{2}$ are disjoint, by essentially the same proof as
 the one used for Theorem \ref{GL}, we have
\begin{equation}
    \label{eq:191}
    \Pp{C\lr[R] A}\ge \mathsf{h}_0( \Pp{C\lr[R] \bar \Gamma}).
  \end{equation}
We next note that when $R = S$, the event $\{A\lr[S] B, C\lr[S]D\}$
implies $\{C\lr[S] \bar \Gamma\}$, so by the Harris-FKG inequality,
\[ \Pp{C\lr[S] \bar \Gamma} \geq  (\Pp{A\lr[S] B}\wedge   \Pp{C\lr[S]D})^2. \]
Combining the last inequality with \eqref{eq:191} yields \eqref{eq:190}.
\end{proof}

Finally we conclude with a last gluing lemma, that will allow us to glue
together circuits. As we will see in the proof, it will be easier to glue a
circuit with a path than gluing two paths. This is due to the fact that the
local modification performed in this case does not create a new circuit, and the
reconstruction step is easier.

We now define a total ordering on circuits. The specific choice of the ordering is not important, ours is the following. A circuit is basically a path $\left( \Gamma(i)\right) _{i=1}^{r}$ in $\mathbb{S}_{k}$, such that $\Gamma(1)=\Gamma(r)$, and $\left( \Gamma (i)\right) _{i=1}^{r-1}$ is a self avoiding path.
Since we will identify circuits that differ by cyclic permutations or reverse orderings of their indices,
we will assume that the representative self avoiding path $(\Gamma(i)) _{i=1}^{r-1}$ has $\Gamma(1) \prec \Gamma(i)$
for $i >1$ and has $\Gamma(2) \prec \Gamma(r-1)$.
Given two circuits $\Gamma =\left( \Gamma (i)\right) _{i=1}^{r_{1}}$, $%
\Gamma ^{\prime }=\left( \Gamma (i)^{\prime }\right) _{i=1}^{r_{2}}$ in $%
\ol{A_{a,b}}$ that surround the origin (i.e., their
projections on $\mathbb{Z}^{2}$ have nonzero winding number around the origin), we set $\Gamma \prec
\Gamma ^{\prime }$ by using the same lexicographical ordering as we defined before for self-avoiding paths.

The following statement will be used in the renormalization argument to prove Lemma
\ref{lem:FC1}.

\begin{theorem}
  \label{cor:GL2}
  Fix $\eps>0$ and $k \geq 1$. There exists $\mathsf h_1\in\mathcal H$ such that
  for every $p \in [\eps, 1-\eps]$ and $n\ge m\ge3$,
  \begin{equation}
    \label{eq:27}
    \Pp{\Gamma_1\lr[R]\Gamma_2}\ge \mathsf h_1(f(3n,2m) a(m,n)^2),
  \end{equation}
  where $R=\ol{[0,3n]\times[-m,m]}$, $\Gamma_1$ is the minimal open circuit in
  $\ol{A_{m,n}}$ surrounding $\ol{B_m}$ {\textnormal(}$\Gamma_1=\emptyset$ if there is no such
  circuit{\textnormal)}, $\Gamma_2$ is the minimal open circuit in $\ol{A_{m,n}((3n,0))}$
  surrounding $\ol{B_m((3n,0))}$, and $a(m,n)$ denotes the probability under
  $\mathbf P_p$ that $\Gamma_1$ exists and is not empty.
\end{theorem}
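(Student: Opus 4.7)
\textit{Plan.} The idea is to combine the Harris--FKG inequality with a local-modification argument in the spirit of the proof of Theorem~\ref{GL}, now adapted to glue an open horizontal crossing of $R$ to the two circuits $\Gamma_1$ and $\Gamma_2$ simultaneously. First, set $\mathcal A=\{\Gamma_1\neq\emptyset\}$, $\mathcal B=\{\Gamma_2\neq\emptyset\}$, $\mathcal C=\mathcal H(R)$; all three events are increasing, so by Harris--FKG, $\Pp{\mathcal A\cap\mathcal B\cap\mathcal C}\ge a(m,n)^2 f(3n,2m)$. On $\mathcal A\cap\mathcal B\cap\mathcal C$, let $\gamma=\Gamma_{\mathrm{min}}^R\bigl(\bar{\{0\}\times[-m,m]},\bar{\{3n\}\times[-m,m]}\bigr)$ be the minimal open horizontal crossing of $R$. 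Since the left (resp.\ right) side of $R$ projects inside $B_m$ (resp.\ $B_m((3n,0))$), and $\pi(\Gamma_i)$ is a $\mathbb Z^2$-circuit surrounding the corresponding box, the projection $\pi(\gamma)$ must cross both $\pi(\Gamma_1)$ and $\pi(\Gamma_2)$ inside $R$; equivalently, each $\Gamma_i$ meets the $r$-neighborhood $\mathcal N(\bar\gamma,r)$.

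Next, I would upgrade these projection-level contacts into actual open connections in $R$ by running the two-part scheme used to prove Theorem~\ref{GL}, applied separately on each side of $R$. For $i\in\{1,2\}$, let $U_i$ be the set of vertices of $\gamma$ within sup-norm distance $r$ of $\Gamma_i$. If $|U_i|$ is small, closing the edges of $\ol{B_{r+1}(z)}$ not in $\gamma$ or $\Gamma_i$ (for each $z\in U_i$) disconnects $\gamma$ from $\Gamma_i$ at a bounded combinatorial cost per vertex (analogue of Fact~1). If $|U_i|$ is large, a local reconstruction inside a single ball $\ol{B_{r+2}(z)}$ around a vertex $z\in U_i$ (analogue of Fact~2) produces a configuration in which $\gamma$ is open-connected to $\Gamma_i$, and Lemma~\ref{combi0} applied with $s=\lvert\ol{B_{r+2}}\rvert$ bounds the fibers of the associated disconnect/reconnect map $\Phi$. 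As emphasized in the remark preceding the statement, this path-to-circuit reconstruction is simpler than in Theorem~\ref{GL}: a local modification in a small ball neither alters $\Gamma_i$ outside that ball nor creates a new circuit in the annulus $\bar A_{m,n}$ (resp.\ $\bar A_{m,n}((3n,0))$), so the minimality bookkeeping required to control the preimages of $\Phi$ is immediate. Since these two annuli have disjoint projections for $n\ge m$, the modifications on the two sides of $R$ can be carried out independently, and after both modifications one obtains $\Gamma_1\lr[R]\gamma\lr[R]\Gamma_2$, hence $\Gamma_1\lr[R]\Gamma_2$.

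Composing the resulting estimates exactly as in the two parts of the proof of Theorem~\ref{GL}---first producing a function $h_1\in\mathcal H$ that is strictly increasing on $[1-\delta,1]$ from the two Facts, then a linear lower bound $h_2(x)=c_0 x$, and finally taking their maximum---would yield a function $\mathsf h_1\in\mathcal H$ for which \eqref{eq:27} holds. The main obstacle is the combinatorial bookkeeping for the simultaneous path-to-circuit reconstructions: one must verify that the pair of disconnect/reconnect maps on the left and right of $R$ satisfy the hypotheses of Lemma~\ref{combi0} with constants depending only on $\eps$ and $k$, and that the minimality of $\gamma$ together with that of each $\Gamma_i$ survives the modifications. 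The observation highlighted in the remark---that a modification in a small ball does not create spurious circuits in the ambient annulus---is precisely what makes this bookkeeping tractable, since it guarantees that $\Gamma_i(\Phi(\omega))$ coincides with $\Gamma_i(\omega)$ outside the modification ball, from which the uniqueness of the preimage vertex $z\in U_i$ follows as in the argument for Theorem~\ref{GL}.
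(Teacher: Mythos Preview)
Your overall strategy matches the paper's: start from the Harris--FKG lower bound $\Pp{\mathcal A\cap\mathcal B\cap\mathcal C}\ge a(m,n)^2 f(3n,2m)$ and then perform two successive gluing steps, each controlled by Lemma~\ref{combi0}. Where you diverge is in the choice of spine and of local modification. You take the minimal horizontal crossing $\gamma$ as the spine, place the pivots $U_i$ on $\gamma$, and invoke the full three-path gadget of Theorem~\ref{GL} inside balls $\ol{B_{r+2}(z)}$. The paper instead places the pivots on the column set $\ol{\Gamma_i}$ of the circuit and uses a much cruder modification confined to $\ol{B_1(z)}$: close every edge in $\ol{B_1(z)}$ except those of $\Gamma_i$ and of one connecting path $\gamma_z$, then open the vertical edges in $\bar z$ joining them. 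No minimal crossing path is ever introduced; the two steps are $\{\ol{\Gamma_1}\lr[R]\ol{\Gamma_2}\}\Rightarrow\{\Gamma_1\lr[R]\ol{\Gamma_2}\}$ and then $\{\Gamma_1\lr[R]\ol{\Gamma_2}\}\Rightarrow\{\Gamma_1\lr[R]\Gamma_2\}$.

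The paper's choice buys two things. First, since the modification explicitly keeps the edges of $\Gamma_i$ open, the circuit is automatically preserved; the only thing to check is that no \emph{new} circuit appears, and this follows because any such circuit would have to use the added vertical segment and hence witness a connection $\Gamma_i\lr[R]\ol{\Gamma_j}$ already present in $\omega$, contradicting the hypothesis of the map. Second, the reconstruction is immediate: the column $\bar z$ is identified as the unique column on $\Gamma_i(\omega')=\Gamma_i(\omega)$ carrying an open connection to the other side that avoids $\Gamma_i$.

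In your scheme there is a technical wrinkle you gloss over. The three-path gadget at $z\in\gamma$ closes all edges of $\ol{B_{r+1}(z)}$ except those rebuilt along $\gamma$ and along the short connecting path to $\Gamma_i$; but since $z$ lies within sup-norm distance $r$ of $\Gamma_i$, the circuit $\Gamma_i$ will typically traverse $\ol{B_{r+1}(z)}$, and its edges there get closed. This destroys the circuit rather than preserving it, so the assertion that ``$\Gamma_i(\Phi(\omega))$ coincides with $\Gamma_i(\omega)$ outside the modification ball'' does not follow from the construction as written. You can repair this by also exempting the edges of $\Gamma_i$ from the closing step, but then the reconstruction must be redone with an additional open object (the surviving arc of $\Gamma_i$) running through the ball, and you must check that the unique-$z$ identification still goes through. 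This is doable, but it is precisely the complication the paper's pivots-on-the-circuit approach sidesteps.
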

\begin{proof}
  We proceed in two steps. First, we prove
\begin{equation}
    \label{eq:111}
    \Pp{\Gamma_1\lr[R]\ol{\Gamma_2}}\ge \mathsf h(\Pp{\ol{\Gamma_1}\lr[R]\ol{\Gamma_2}}),
  \end{equation}
and then
\begin{equation}
    \label{eq:112}
    \Pp{\Gamma_1\lr[R]\Gamma_2}\ge \mathsf h(\Pp{\Gamma_1\lr[R]\ol{\Gamma_2}}).
  \end{equation}
  We finish the proof by using the FKG inequality, which implies
$$ \Pp{\ol{\Gamma_1}\lr[R]\ol{\Gamma_2}} \ge f(3n,2m) a(m,n)^2.$$

Let us begin with the proof for \eqref{eq:111}. Given a configuration $\omega$,
we define $U(\omega) $ as the set of points $z\in R$ such that
  \begin{itemize}
  \item $z \in \ol{\Gamma_1(\omega)}$, and
  \item $z$ is connected to $\ol{\Gamma_2(\omega)}$ by a self-avoiding path $\gamma_z $ in $R$.
  \end{itemize}

  Let $x= f(3n,2m) a(m,n)^2$, then by the same argument as used for Fact~1 in
  the proof of Theorem \ref{GL}, we can show that there is some $C_1 <\infty$,
  such that
  \begin{equation}
    \label{eq:28}
    \Pp{\ol{\Gamma_1}\lr[R]\ol{\Gamma_2}, \Gamma_1\nlr[R] \ol{\Gamma_2}, |U|\le t} \le (C_1)^t (1-x).
  \end{equation}

  We then prove that there exists some $C_2<\infty$ such that for every $t\ge1$,
  \begin{equation}
    \label{eq:29}
    \Pp{\ol{\Gamma_1}\lr[R]\ol{\Gamma_2}, \Gamma_1\nlr[R] \ol{\Gamma_2}, |U|\ge t}\le \frac{C_2}{t} \Pp{\Gamma_1\lr[R] \ol{\Gamma_2}},
  \end{equation}

  using a map
  \begin{equation}
    \label{eq:30}
    \Phi:
  \left\vert  \begin{array}{ccc}
\{\ol{\Gamma_1}\lr[R]\ol{\Gamma_2}, \Gamma_1\nlr[R] \ol{\Gamma_2},
    |U|\ge t\}&\to& \mathfrak P \left ( \{\Gamma_1\lr[R] \ol{\Gamma_2}\} \right
    )\\
    \omega&\mapsto&\{\omega^{(z)},z\in U(\omega)\}
  \end{array}
  \right. .
  \end{equation}
  Let $\omega\in \{\ol{\Gamma_1}\lr[R]\ol{\Gamma_2}, \Gamma_1\nlr[R]
  \ol{\Gamma_2}, |U|\ge t\}$. For every $z\in U(\omega)$, the configuration
  $\omega^{(z)}\in \{\Gamma_1\lr[R] \ol{\Gamma_2}\}$ is constructed as follows.
 \begin{enumerate}
 \item Close all the edges in $\ol{B_1(z)}$ except those in $\Gamma_1(\omega)$ and
   $\gamma_z$.
 \item Let $u \in \bar z\cap \Gamma_1(\omega)$ and $v\in \bar z \cap \gamma_z$, such
   that no vertex (except $u$ and $v$) in the vertical segment between $u$ and
   $v$ belongs to $\Gamma_1(\omega)$ or $\gamma_z$. Then open all the vertical edges
   between $u$ and $v$.
\end{enumerate}

Denote by $\omega^{(z)}$ the resulting configuration. Observe that the local
modification above does not create any new circuit in $\ol{A_{m,n}}$. Otherwise,
the new circuit would contain all the vertical edges between $u$ and $v$, which
would imply some site on $\Gamma_1(\omega)$ is connected (through
$\gamma_z(\omega)$) to $\ol{\Gamma_2(\omega)}$, which contradicts
$\omega\in\{\Gamma_1\nlr[R] \ol{\Gamma_2}\}$. Therefore one can reconstruct
$\omega$ by noting that $u \in \ol{z}$ is the only site on
$\Gamma_1(\omega^{(z)})=\Gamma_1(\omega)$ that connects to
$\ol{\Gamma_2(\omega^{(z)})}=\ol{\Gamma_2(\omega)}$ without using any other
edges in $\Gamma_1$. Applying Lemma \ref{combi0} leads to \eqref{eq:29} with
$C_2 =(2/\eps)^{|\ol{B_1}|}$.

From \eqref{eq:28} and \eqref{eq:29} we can conclude \eqref{eq:111} by using the
same argument as in the proof of Theorem \ref{GL}.

Similarly, we can prove \eqref{eq:112} by defining $U(\omega)$ as the set of
points $z\in R$ such that
  \begin{itemize}
  \item $z \in \ol{\Gamma_2(\omega)}$, and
  \item $z$ is connected to $\Gamma_1(\omega)$ by a self-avoiding path $\gamma_z$ in $R$.
  \end{itemize}

And we construct a map
 \begin{equation}
   \Phi:  \left \vert
     \begin{array}{ccc}
    \{\Gamma_1 \lr[R]\ol{\Gamma_2}, \Gamma_1\nlr[R] \Gamma_2,
     |U|\ge t\}&\to& \mathfrak P \left(\{\Gamma_1\lr[R] \Gamma_2\} \right)\\
     \omega&\mapsto&\{\omega^{(z)},z\in U(\omega)\}
   \end{array}
   \right..
  \end{equation}
  Let $\omega\in \{\Gamma_1 \lr[R]\ol{\Gamma_2}, \Gamma_1\nlr[R] \Gamma_2,
  |U|\ge t\}$. For every $z\in U(\omega)$, the configuration $\omega^{(z)}$ is
  constructed as follows.
\begin{enumerate}
\item Close all the edges in $\ol{B_1(z)}$ except those in $\Gamma_2(\omega)$ and
  $\gamma_z$.
\item Let $u \in \bar z\cap \Gamma_2(\omega)$ and $v\in \bar z \cap \gamma_z$, such that
  no vertex (except $u$ and $v$) in the vertical segment between $u$ and $v$
  belongs to $\Gamma_2(\omega)$ or $\gamma_z$. Then open all the vertical edges between
  $u$ and $v$.
\end{enumerate}

As above, the local modification does not create any new circuit inside $\ol{A_{m,n}((3n,0))}$, and one can reconstruct $\omega$ from $\omega^{(z)}$ by noting that $u \in \ol{z}$
is the only site on $\Gamma_2$ that connects to $\Gamma_1$
without using any other edges in $\Gamma_2$. Applying Lemma \ref{combi0}, we obtain
\begin{equation}
    \Pp{\Gamma_1 \lr[R]\ol{\Gamma_2}, \Gamma_1\nlr[R] \Gamma_2, |U|\ge t}\le \frac{C_2}{t} \Pp{\Gamma_1\lr[R] \Gamma_2}.
  \end{equation}
The same argument as in the proof of Theorem \ref{GL} yields  \eqref{eq:112}.

\end{proof}

\subsection{Crossing estimates}

\label{sec:defin-dual-surf}

Let $R=\ol{[u,v]\times[w,t]}$ be a rectangular region in~$\mathbb{S}_k$.  Let $\mathsf{L}(R)$, $\mathsf{R}(R)$, $\mathsf{T}(R)$ and $\mathsf{B}(R)$ be
respectively the left, right, top and bottom sides of $R$.

The following proposition extends to slabs some standard estimates in planar
percolation.

\begin{proposition}
  \label{prop:standard-inequality} Let $r$ be as in Definition~\ref{def:rad1}.
  Fix $\eps>0$ and $k \geq 1$. There exists $\mathsf{h}_{2}\in \mathcal{H}$ such
  that for every $p \in [\eps, 1-\eps]$, for every $\kappa >0$, $j\geq 2$, and every
  $n\geq r+2$,
\begin{enumerate}[\bf 1.]
\item\label{item:1} $f_p(n+j\kappa n,n)\geq \mathsf h_2^{j-1} (f_p(n+\kappa n,n))$,

\item\label{item:2} $f_p(n,n+\kappa n)\geq \mathsf
  h_2^{j-1}(f_p(n,n+j\kappa n))$,

\end{enumerate}
where $\mathsf h^{j}=\underbrace{\mathsf h\circ\cdots\circ \mathsf
  h}_{j \text{ times}}$ denotes the $j$-th iterate of $\mathsf h$.
\end{proposition}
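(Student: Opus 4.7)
The strategy is to prove each item by a base case ($j=2$) and then iterate; a common $\mathsf{h}_2\in\mathcal{H}$ is chosen to dominate both base bounds.

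For Item~\ref{item:1} I use a chaining argument. Write $R=\ol{[0,n+2\kappa n]\times[0,n]}$ as the union of the two overlapping sub-rectangles $R_0=\ol{[0,n+\kappa n]\times[0,n]}$ and $R_1=\ol{[\kappa n,n+2\kappa n]\times[0,n]}$ sharing an $n\times n$ square $S$. The event $\mathcal{H}(R_0)\cap\mathcal{H}(R_1)\cap\mathcal{V}(S)$ has probability at least $f(n+\kappa n,n)^2 f(n,n)\ge f(n+\kappa n,n)^3$ by Harris--FKG (Theorem~\ref{thm:FKG}) together with the monotonicity $f(n,n)\ge f(n+\kappa n,n)$ in the width. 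On this event, planar topology forces the projections of the three open paths (the minimal horizontal crossing $\Gamma$ of $R_0$, a horizontal crossing of $R_1$, and a vertical crossing of $S$) to intersect pairwise in the planar overlap, and the gluing lemma (Theorem~\ref{GL}) promotes these projection-level intersections into actual open connections in the slab, yielding $\mathcal{H}(R)$. The base case $f(n+2\kappa n,n)\ge \mathsf h(f(n+\kappa n,n)^3)$ for some $\mathsf h\in\mathcal H$ follows; the inductive step $j\to j+1$ is obtained by replacing $R_0$ with a rectangle of width $n+j\kappa n$ and using $f(n+\kappa n,n)\ge f(n+j\kappa n,n)$ in the Harris--FKG step.

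For Item~\ref{item:2} I use a direct union-bound argument. Decompose the tall rectangle $T=\ol{[0,n]\times[0,n+2\kappa n]}$ as $T_1\cup T_2$ with $T_1=\ol{[0,n]\times[0,n+\kappa n]}$ and $T_2=\ol{[0,n]\times[\kappa n,n+2\kappa n]}$ overlapping in an $n\times n$ square $S$. Any horizontal crossing of $T$ either fits inside $T_1$, fits inside $T_2$, or has $y$-coordinates visiting both $y<\kappa n$ and $y>n+\kappa n$; in the latter case its restriction to $\{\kappa n\le y\le n+\kappa n\}$ provides a vertical crossing of $S$. By translation invariance and the $\pi/2$-rotation symmetry of $\mathbb{S}_k$, one has $\Pp{\mathcal{H}(T_i)}=f(n,n+\kappa n)$ for $i=1,2$ and $\Pp{\mathcal{V}(S)}=f(n,n)\le f(n,n+\kappa n)$, so a union bound gives $f(n,n+2\kappa n)\le 3 f(n,n+\kappa n)$. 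Iterating this doubling step $k=\lceil\log_2 j\rceil\le j-1$ times and using monotonicity $f(n,n+2^k\kappa n)\ge f(n,n+j\kappa n)$ yields $f(n,n+\kappa n)\ge f(n,n+j\kappa n)/3^{j-1}$.

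Taking $\mathsf{h}_2(x):=\min\{\mathsf h(x^3),\,x/3\}$, which lies in $\mathcal{H}$ as the minimum of two strictly increasing continuous functions (and satisfies $\mathsf{h}_2(x)\le x$), serves both items: Item~\ref{item:1} uses the first term and Item~\ref{item:2} uses the second, with the iteration in each case respecting $\mathsf h_2(y)\le y$. The main obstacle lies in Item~\ref{item:1}, where making the base step yield a clean inequality of the form $f(n+2\kappa n,n)\ge \mathsf h(f(n+\kappa n,n)^3)$ requires careful application of Theorem~\ref{GL} (possibly iterated) to promote projection-level intersections of open paths to slab-level open connections; the case $\kappa n<r+2$, in which the separation hypothesis of Theorem~\ref{GL} fails, is handled by the more flexible version of the gluing lemma noted in Remark~\ref{item:6}.
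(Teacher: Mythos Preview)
Your treatment of Item~\ref{item:2} is correct and uses exactly the same three-piece geometric decomposition as the paper. The only difference is that the paper applies the square-root trick (Corollary~\ref{cor:SRT}) to the three increasing events to obtain $f(n,n+\kappa n)\ge 1-(1-f(n,n+2\kappa n))^{1/3}$, whereas you use a plain union bound to get $f(n,n+\kappa n)\ge f(n,n+2\kappa n)/3$. Both yield a valid $\mathsf h_2\in\mathcal H$; the paper's function has the additional feature $\mathsf h_2(1)=1$, which is useful elsewhere but is not required by the proposition.

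Your argument for Item~\ref{item:1}, however, has a genuine gap at the step you yourself flag as ``the main obstacle.'' The inequality $\Pp{\mathcal H(R_0)\cap\mathcal H(R_1)\cap\mathcal V(S)}\ge f(n+\kappa n,n)^3$ is fine, but the gluing lemmas do \emph{not} take the probability of an intersection of connection events as input: Theorem~\ref{GL0} requires four boundary sets $A,B,C,D\subset\partial S$ satisfying the topological hypothesis that every $A$--$B$ path projection-intersects every $C$--$D$ path, and it outputs a bound on $\Pp{C\lr[S]A}$. If you try to set this up in the big rectangle $R$ with $A=\mathsf L(R)$, $C=\mathsf R(R)$ and $B,D$ chosen among $\mathsf T(S),\mathsf B(S)$, the topological hypothesis fails (e.g.\ a path hugging the top boundary from $\mathsf L(R)$ to $\mathsf T(S)$ need not meet a path hugging the bottom boundary from $\mathsf R(R)$ to $\mathsf B(S)$). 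Iterating Theorem~\ref{GL} through the vertical crossing runs into the same obstruction: after one application you connect $\mathsf R(R)$ to an \emph{endpoint} ($\mathsf T(S)$ or $\mathsf B(S)$), not to the minimal path $\Gamma$ in $R_0$, and a second application again lacks the required separation of endpoints.

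The paper resolves this by first invoking the square-root trick on the two halves of $\mathsf B(S)$: with $X=\ol{[0,n/2]\times\{0\}}$ one gets $\Pp{X\lr[S]\mathsf T(S)}\ge 1-\sqrt{1-f(n,n)}$, and then Theorem~\ref{GL0} (applied in $R_0$ with $A=\mathsf L(R_0)$, $B=\mathsf R(R_0)$, $C=X$, $D=\mathsf T(S)$) gives $\Pp{X\lr[R_0]\mathsf R(R_0)}\ge\mathsf h_0(1-\sqrt{1-f(n+\kappa n,n)})$. The point of the half-side is that now, in $R'=\ol{[-\kappa n,n+\kappa n]\times[0,n]}$, a path from $X$ to $\mathsf R(R')$ \emph{must} projection-intersect a path from the complementary half-side $Y=\ol{[n/2,n]\times\{0\}}$ to $\mathsf L(R')$, so a second application of Theorem~\ref{GL0} yields $f(n+2\kappa n,n)\ge\mathsf h_0\circ\mathsf h_0(1-\sqrt{1-f(n+\kappa n,n)})$. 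This localization of an endpoint on a half-side is the missing ingredient in your sketch.
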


\begin{proof} 
  Let us begin with Item~\ref{item:1}. We only prove the $j=2$ case,
\begin{equation}
f_p(n+2\kappa n,n)\geq \mathsf{h}_{2}(f_p(n+\kappa n,n)).
\label{eq:9}
\end{equation}%
The more general statement (in fact a stronger result) follows by induction. For simplicity we assume that
$\kappa n$ and $n/2$ are integers. Let $R=\ol{[ 0,n+\kappa n]\times[0,n]}$, $S=\ol{[0,n]^2}$.
and $X=\ol{[0,n/2]\times \{0\}}$. Invariance under reflection and the square root
trick imply
\begin{align}
  \label{eq:37}
  \Pp{X\lr[S] \mathsf T(S)}&\ge 1-\sqrt{1-f_p(n,n)}\\
  &\ge 1-\sqrt{1-f_p(n+\kappa n,n)}.
\end{align}
Then, by the gluing lemma of Theorem~\ref{GL0}, we have
\begin{align}
  \label{eq:41}
  \Pp{X\lr[R] \mathsf R(R)}
   &\ge \mathsf h_0\big(1-\sqrt{1-f_p(n+\kappa n,n)}\big)\\
&= \mathsf h(f_p(n+\kappa n, n)),
\end{align}
where we use that $1-\sqrt{1-f} \leq f$ for $f \in [0,1]$, and define $\mathsf h(f)\dot =\mathsf{h}_0(1-\sqrt{1-f})$.

Next let $R'=\ol{[-\kappa n,n+\kappa n]\times[0,n]}$ and $Y=\ol{[n/2,n]\times \{0\}}$.
Another application of the Theorem~\ref{GL0} gluing lemma inside $R'$ gives
\begin{align}
  \label{eq:43}
  f_p(n+2\kappa n,n)&=\Pp{\mathsf L(R')\lr[R']\mathsf R(R)}\\
  &\ge\mathsf h_0 \big (\Pp{\mathsf L(R')\lr[R'] Y} \wedge  \Pp{
    X\lr[R']\mathsf R(R')}\big)\\
  &=\mathsf h_0 \big(\mathsf h (f_p(n+\kappa n, n))\big),
\end{align}
which gives exactly the statement of Eq.~\eqref{eq:9} with $\mathsf{h}_{2}= \mathsf{h}_{0} \circ \mathsf{h}$.

We now prove the second item. As we did for the first item we only prove
\begin{equation}
  \label{eq:44}
  f_p(n,n +\kappa n)\geq \mathsf{h}_{2}(f_p(n,n+2\kappa n)),
\end{equation}
and the general statement follows by induction. Consider the event that
there exists a top-down open crossing in $R'$. Then it is not hard to see that at least one of the following three
events must occur:
\begin{itemize}
\item The rectangular region $\ol{[-\kappa n,n]\times[0,n]}$ is crossed from top to bottom;
\item the rectangular region $R$ is crossed from top to bottom;
\item the square region $S$ is crossed from left to right.
\end{itemize}
The maximum probability of these three events is at least $f_p(n,n+\kappa n)$,
and the square root trick (Corollary \ref{cor:SRT}) then gives
\begin{equation}
  \label{eq:45}
  f_p(n,n+\kappa n)\ge 1-(1-\Pp{U})^{1/3} \ge 1-(1-f_p(n,n+2\kappa n))^{1/3},
\end{equation}
where $U$ denotes the union of the three events.
\end{proof}

The next theorem allows us to create an open circuit in an annulus with positive
probability. Before stating the theorem, we note that by elementary arguments
(e.g., by bounding the expected number of open self-avoiding paths of length $m$
starting from a given vertex), it is easy to see that there is some $\eps>0$
(depending only on $k$) such that $\sup_{n\geq 2}(f_\eps(2n,n-1)) <1/2$.
\begin{theorem}
  \label{cor:GL}Let $r\ge 3$ be as in Definition~\ref{def:rad1}. Fix $k\ge1$,
  and $\eps>0$ such that $\sup_{n\ge 2}(f_\eps(2n,n-1))<1/2$. For every $c>0$,
  there exists $\lambda=\lambda(c)\ge1$ and $c'>0$ such that the following
  holds. For every $p\in[\eps,1-\eps]$ and every $n\ge 4r$,
  \begin{equation}
    \label{eq:26}
    f_p(2n,n-1)\ge c \implies \Pp{\mathcal A_{\lambda n,2\lambda n}} \ge c'.
  \end{equation}
  where $\mathcal A_{\ell,2\ell}$ is the event that there exists inside
  $\ol{A_{\ell,2\ell}}$ an open circuit surrounding $\ol{B_\ell}$.
\end{theorem}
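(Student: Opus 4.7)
My plan is to follow the classical RSW-to-circuit strategy, with the slab's non-planarity handled by the gluing lemma.

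First, I would extend the hypothesis to long rectangles. From $f_p(2n,n-1)\geq c$ and the monotonicity of $f_p$ in its second argument (which gives $f_p(2n,n)\geq c$), iterated application of Proposition~\ref{prop:standard-inequality}(\ref{item:1}) with $\kappa=1$ yields, for every fixed integer $K\geq 2$,
\[
f_p(Kn,n)\geq \mathsf{h}_2^{K-2}(c)=:c_K>0.
\]
The $\pi/2$-rotational symmetry of $\mathbb{S}_k$ gives the analogous lower bound for vertical crossings of $n\times Kn$ rectangles.

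Next, I would set $\ell=\lambda n$ for a constant $\lambda=\lambda(c)\geq 1$ (large enough that $\ell\geq r+2$, which is automatic for $\lambda\geq 1$ since $n\geq 4r$), and decompose $\ol{A_{\ell,2\ell}}$ into four side rectangles $T,R,B,L$ of aspect ratio $4{:}1$ (one per side of the annulus) and four $\ell\times\ell$ corner squares $S_{TL},S_{TR},S_{BR},S_{BL}$ located at their pairwise overlaps. By the previous step, each side rectangle has its long-direction crossing with probability $\geq c_4$, and each corner square is crossed both horizontally and vertically with probability $\geq c_1>0$. At each corner, I would apply Theorem~\ref{GL0} to the corner square with $A,B,C,D$ taken as its four sides (arranged so that the two ``interior'' sides face the adjacent side rectangles): this produces a positive-probability open connection across the corner, with probability at least $\mathsf{h}_0(c_1)=:c_{\mathrm{cor}}>0$. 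All four side-crossing events and all four corner-gluing events are increasing, so the Harris--FKG inequality (Theorem~\ref{thm:FKG}) shows they occur simultaneously with probability at least $c_4^4 c_{\mathrm{cor}}^4>0$, on which event the union of the corresponding open paths contains a circuit surrounding $\ol{B_\ell}$ in $\ol{A_{\ell,2\ell}}$.

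The main obstacle is precisely the slab's non-planarity: in the plane, four crossings along the sides of an annulus automatically form a circuit by planarity of paths, but in $\mathbb{S}_k$ two paths can have intersecting projections without actually meeting. The gluing lemma is the essential tool to enforce the required three-dimensional connections at the corners. Some additional care is needed to ensure that the corner gluings produced by Theorem~\ref{GL0} truly concatenate with the side crossings into a single open loop rather than a disjoint union of paths whose projections merely intersect; if Theorem~\ref{GL0} alone proves inadequate for that bookkeeping, I would instead apply the stronger Theorem~\ref{GL} to the minimal side crossing (e.g.\ the minimal horizontal crossing of $T$), which gives a connection event robust enough to guarantee that $\Gamma_{\mathrm{min}}^T$ is joined in $\mathbb{S}_k$ to a vertical crossing of each neighbouring strip.
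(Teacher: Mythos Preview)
Your FKG argument has a genuine gap. Even if all four long-direction crossings and all four corner connections (each of the form ``one side of the corner square connected to an adjacent side'') occur simultaneously, nothing forces these eight open paths to belong to a common cluster: the horizontal crossing of $T$ may pass straight through $S_{TR}$ without touching the corner path there, and the corner path need not touch the vertical crossing of $R$. In the plane this is automatic by planarity, but in the slab it fails, and the gluing lemmas do not rescue it. Theorem~\ref{GL0} only bounds $\Pp{C\lr[S]A}$ from below; this is an event about connecting two \emph{deterministic} boundary sets, not about connecting one specific crossing path to another. Your fallback via Theorem~\ref{GL} has the same defect: its conclusion is again $\Pp{C\lr[R]A}$, and while the proof passes through the minimal path $\Gamma$, the event produced does not assert that $C$ is connected to $\Gamma$ itself. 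Even if you extracted such a statement, chaining four such gluings around the annulus runs into the problem that the local modifications at different corners can interfere (a modification at one corner may destroy a connection established at another).

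The paper's proof handles this by a device you never invoke, and which explains the otherwise-unused hypothesis $\sup_n f_\eps(2n,n-1)<1/2$. One first reduces (by lowering $p$, using monotonicity of $\Pp{\mathcal A_{\lambda n,2\lambda n}}$) to the case $f_p(2n,n-1)\le 1/2$. One then covers the annulus by two L-shaped regions $S_1,S_2$ so that only \emph{two} gluings are needed (at diagonally opposite corners), and crucially works on the event $\mathcal E$ that each L-shape is crossed by a \emph{unique} open cluster. The BK inequality, together with $f_p\le 1/2$ and a suitably large $\lambda$, makes the two-disjoint-crossings event in an L-shape rare enough that $\Pp{\mathcal E}$ stays bounded below. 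Uniqueness is precisely what guarantees that the second corner modification connects to the same cluster the first one did, so that the two gluings together close up into a genuine circuit.
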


\begin{proof}
Fix $p\in[\eps,1-\eps]$, $n\ge4r$ and assume that 
\begin{equation}
  \label{eq:70}
  f_p(2n,n-1)\ge c.
\end{equation}
We may also add the restriction that  
\begin{equation}
  f_p(2n,n-1)\le 1/2.\label{eq:34}
\end{equation}
Indeed, if \eqref{eq:34} does not hold, one can lower the value of $p$ in such a
way that both \eqref{eq:70} and \eqref{eq:34} hold. The full conclusion then
follows from the monotonicity of $ \Pp{\mathcal A_{ \lambda n,2 \lambda n}}$ in~$p$.

\begin{figure}[htbp]
  \centering
  \begin{minipage}[t]{.35\textwidth}
    \centering
    \includegraphics[width=\textwidth]{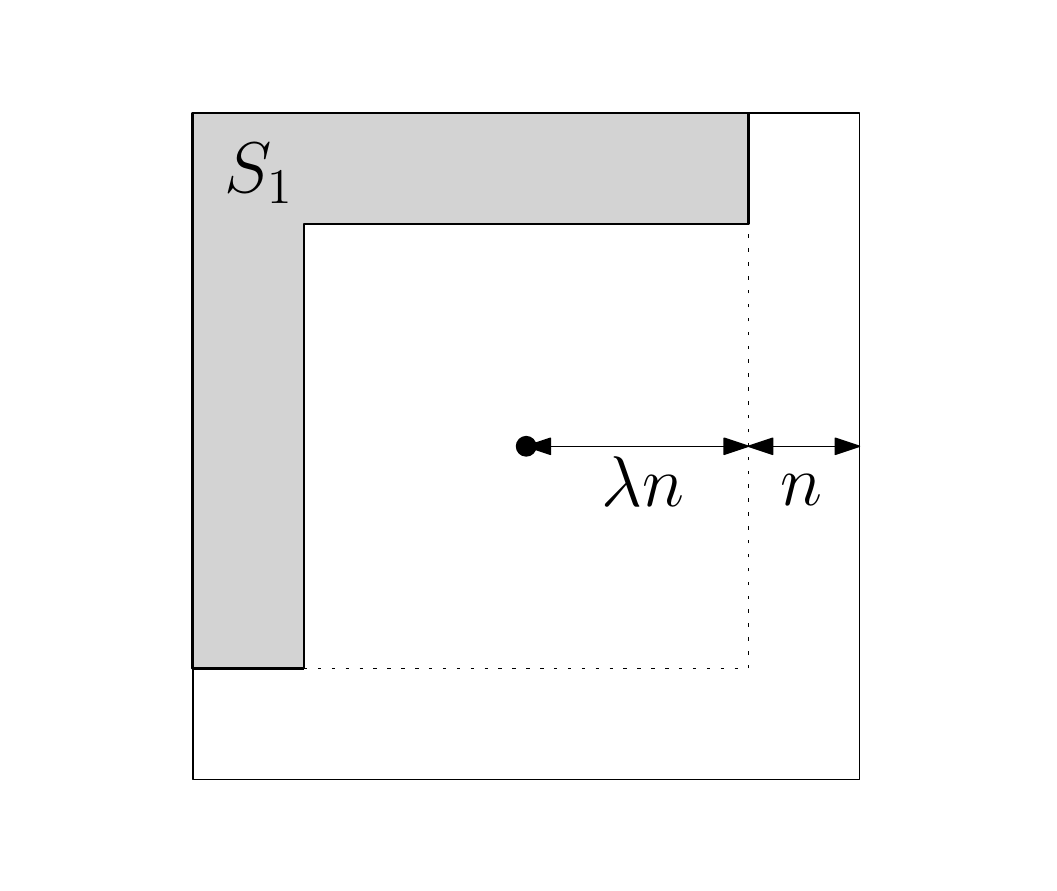}
  \end{minipage}
\begin{minipage}[t]{.35\textwidth}
    \centering
    \includegraphics[width=\textwidth]{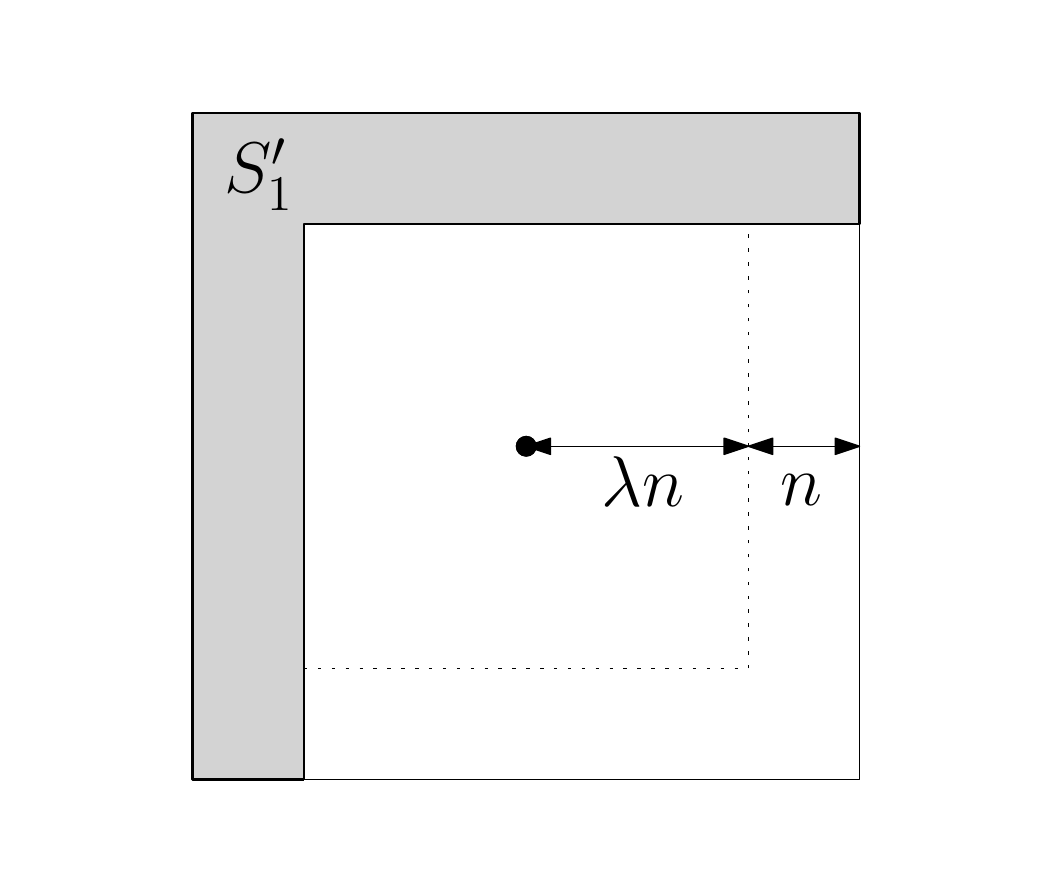}
  \end{minipage}
\\

\medskip

\begin{minipage}[t]{.35\textwidth}
    \centering
    \includegraphics[width=\textwidth]{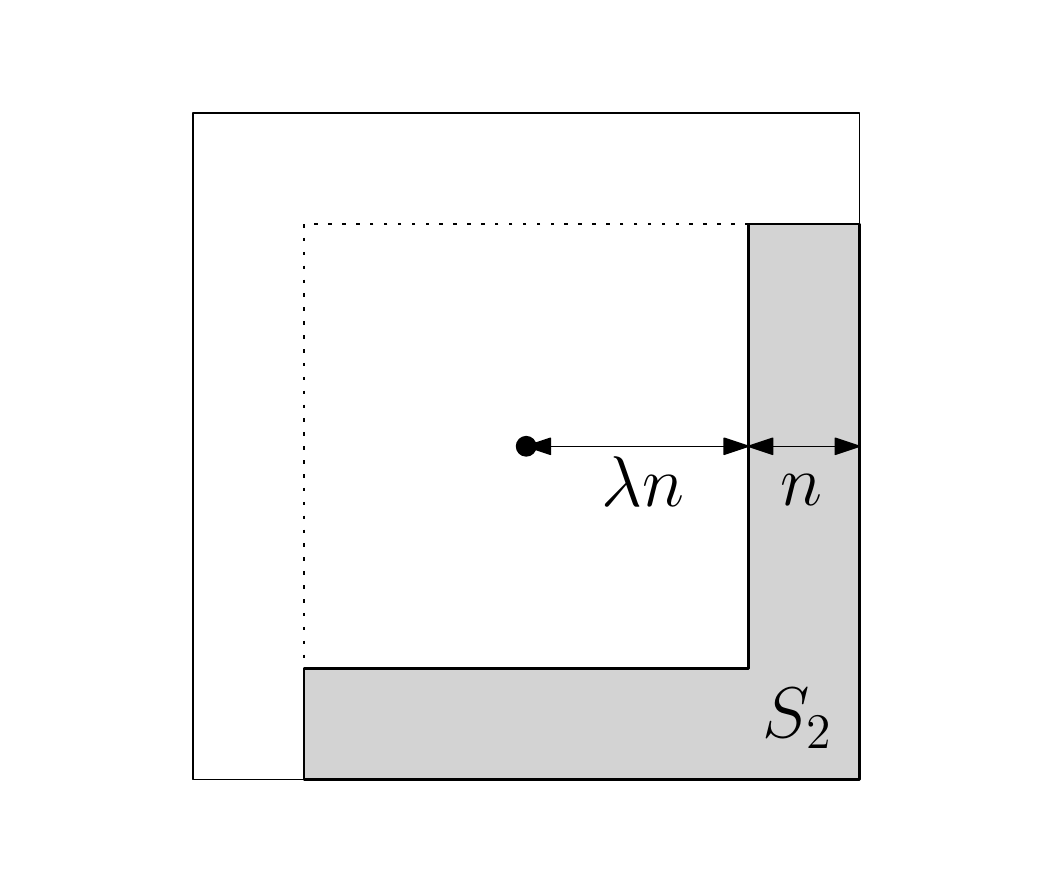}
  \end{minipage}
\begin{minipage}[t]{.35\textwidth}
    \centering
    \includegraphics[width=\textwidth]{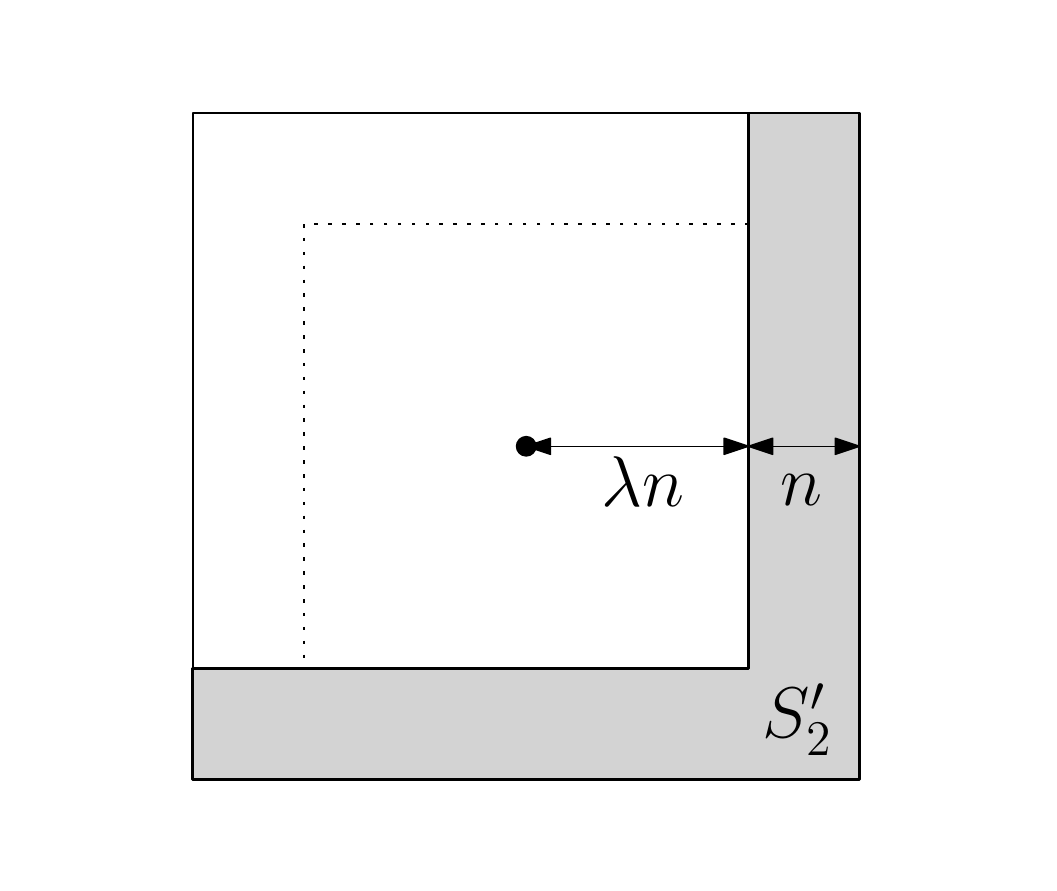}
  \end{minipage}

  \caption{Illustration of the four L-shapes used in the proof.}
  \label{fig:7}
\end{figure}

Define the following  L-shaped regions, illustrated in Fig.~\ref{fig:7}.

\begin{align}
  \label{eq:55}
   S_1=&\ol{[-\lambda n-n, \lambda n]\times (\lambda n, \lambda n+n]}
  \cup\ol{[-\lambda n-n, -\lambda n)\times [-\lambda n, \lambda n+n]},\\
  S_1'=&\ol{[-\lambda n-n, \lambda n+n]\times (\lambda n,\lambda n+n]}
  \cup\ol{[-\lambda n-n, -\lambda n)\times [-\lambda n-n,\lambda n+n]}.
 \end{align}
We also define
\begin{align}
 S_2=&\ol{[-\lambda n, \lambda n+n]\times [-\lambda n-n,-\lambda n)}
  \cup\ol{(\lambda n ,\lambda n+n]\times [- \lambda n-n, \lambda n]},\\
  S_2'=&\ol{[-\lambda n-n,\lambda n+n]\times [-\lambda n-n,-\lambda n)}
  \cup\ol{(\lambda n , \lambda n+n]\times [-\lambda n-n , \lambda n+n]}.
\end{align}
as the images of $S_1$ and $S_1'$ under the $\pi$-rotation around the origin.

Define then $\mathsf B(S_1) =\ol{[-\lambda n-n, -\lambda n)\times
  \{-\lambda n\}}$, $\mathsf B(S_1') =\ol{[-\lambda n-n, -\lambda n)\times
  \{-\lambda n-n\}}$, \\$\mathsf R(S_1) =\ol{\{\lambda n\}\times (\lambda
  n,\lambda n+n]}$, $\mathsf R(S_1') =\ol{\{\lambda n +n\}\times (\lambda
  n,\lambda n+n]}$, \\and similarly, $\mathsf L(S_2) =\ol{\{-\lambda n\}\times
  [-\lambda n-n,-\lambda n)}$, $\mathsf L(S_2') =\ol{\{-\lambda n-n\}\times
  [-\lambda n-n,-\lambda n)}$,\\ $\mathsf T(S_2) =\ol{(\lambda n , \lambda
  n+n]\times \{\lambda n\}}$, $\mathsf T(S_2') =\ol{(\lambda n , \lambda
  n+n]\times \{\lambda n +n\}}$.

The general idea in the proof is to use that with positive probability there
exists a unique cluster crossing the L-shape $S_1$, and a unique cluster
crossing $S_2$. Then we connect these two clusters at two diffferent places in
order to create an open circuit inside the annulus $\ol{A_{\lambda n,\lambda
    n+n}}$. These two connections will be obtained by two local modifications in
the top-right and bottom-left corners of the annulus $\ol{A_{\lambda n,\lambda
    n+n}}$. The uniqueness of the two clusters crossing $S_1$ and $S_2$ will be
important to avoid that the two local modifications interact. In other words,
the uniqueness requirements will prevent the second local modification from
cutting the connection created by the first local modification.

We first claim that for every $\lambda>0$ there exists a constant
$c_1=c_1(c)>0$, such that

\begin{equation}
  \label{eq:65}
  \Pp{\mathsf B(S_1') \lr[S_1'] \mathsf R(S_1')}\ge c_1 \Pp{\mathsf B(S_1)\lr[S_1] \mathsf R(S_1)}.
\end{equation}
This inequality can be obtained by performing
several gluing procedures similar to those used in the proof of
Proposition~\ref{prop:standard-inequality}, and for these gluing procedures we
use Remark 3 after Theorem~\ref{GL}.

Fix $\lambda>0$ large enough such that
\begin{equation}
2^{-\lambda}\le c_1^2/4.\label{eq:71}
\end{equation}
(The value of $\lambda$ depends on $c$ through the constant $c_1$ but does not
depend on $n$.) 

Now, we need to make precise what we mean by a unique cluster crossing $S_i$.
For $i=1,2$, let $\mathcal U_i$ be the event that there exists a unique cluster
in the configuration restricted to $S_i$ that intersects both ends of $S_i$
(i.e., the bottom and right ends of $S_1$ or respectively the left and top ends
of $S_2$). We define
\begin{align}
  &\mathcal E_0= \{\mathsf B(S_1') \lr[S_1'] \mathsf R(S_1'), \mathsf L(S_2')
  \lr[S_2'] \mathsf T(S_2')\},\text{ and}\\
  &\mathcal E= \mathcal E_0\cap\mathcal U_1\cap\mathcal U_2.
\end{align}

We wish to show that the event $\mathcal E$ occurs with probability larger than
some some positive constant. First, by the union bound we have
\begin{align}
  \Pp{\mathcal E}&\ge \Pp{\mathcal E_0} - \Pp{\mathcal E_0\setminus \mathcal
    U_1}-\Pp{\mathcal E_0\setminus \mathcal U_2}\\
&= \Pp{\mathcal E_0} -2 \Pp{\mathcal E_0\setminus \mathcal
    U_1}.\label{eq:66}
\end{align}
Then,  Eq.~\eqref{eq:65} and the Harris-FKG inequality imply
\begin{equation}
  \label{eq:68}
  \Pp{\mathcal E_0}\ge c_1^2 \Pp{\mathsf B(S_1)\lr[S_1] \mathsf R(S_1)}^2.
\end{equation}
Also, observe that the occurrence of the event $\mathcal E_0\setminus \mathcal
U_1$ implies the existence of
\begin{itemize}
\item two disjoint open paths from $\mathsf B(S_1)$ to $\mathsf R(S_1)$
  inside $S_1$,
\item an open path from $\mathsf L(S_2)$ to $\mathsf T(S_2)$ inside $S_2$.
\end{itemize}

Using first independence and then the BK inequality (see \cite{grimmett1999} for the
definition of disjoint occurrence and the BK inequality), we find 
\begin{align}
  \Pp{\mathcal E_0\setminus \mathcal U_1}&\le \Pp{\text{$S_1$ crossed by two
      disjoint open paths}} \Pp{\mathsf
    L(S_2)\lr[S_2] \mathsf T(S_2)}\\
  &\le \Pp{\mathsf B(S_1)\lr[S_1]\mathsf R(S_1)}^2 \Pp{\mathsf L(S_2)\lr[S_2]
    \mathsf T(S_2)}\\
  &\le \frac{c_1^2}4 \Pp{\mathsf B(S_1)\lr[S_1]\mathsf R(S_1)}^2.\label{eq:72}
\end{align}

For the last inequality, we use that an open path from $\mathsf L(S_2)$ to
$\mathsf T(S_2)$ inside $S_2$ must cross $\lambda$ (actually, $2 \lambda$)
disjoint $2n$ by $n-1$ rectangles in the long direction, and each of these
crossings occur with probability less than $1/2$ by Equation~\eqref{eq:34}.
Therefore, our choice of $\lambda$ in Eq.~\eqref{eq:71} gives $\Pp{\mathsf
  L(S_2)\lr[S_2] \mathsf T(S_2)}\le 2^{-\lambda}\le c_1^2/4$.

Plugging \eqref{eq:68} and \eqref{eq:72} in \eqref{eq:66}, we obtain
\begin{equation}
  \label{eq:67}
  \Pp{\mathcal E}\ge \frac{c_1^2}2\Pp{\mathsf B(S_1)\lr[S_1]\mathsf R(S_1)}^2.
\end{equation}

Finally, as in the proof of Proposition~\ref{prop:standard-inequality}, we can
use the estimate of Eq.~\eqref{eq:70} to show that $\Pp{\mathsf B(S_1)\lr[S_1]\mathsf R(S_1)}
\ge \mathsf h(c)$ for some $\mathsf h\in\mathcal H$ (that depends on $\lambda$).
Therefore, there exists a constant $c_2=c_2(c,\lambda)>0$ such that
\begin{equation}
  \label{eq:69}
  \Pp{\mathcal E}\ge c_2.
\end{equation}

We claim that there exists a constant $c_3>0$ such that $\Pp{\mathcal A_{\lambda
    n,\lambda n+n}} \ge c_3^2 \Pp{\mathcal E}$, which will then finish the
proof. To show this, we now perform a two step gluing procedure in the square
regions $R_1\dot =\ol{(\lambda n,\lambda n +n]^2}$ and $R_2\dot =\ol{[-\lambda n
  -n ,-\lambda n)^2}$ to create an open circuit. When $\mathcal E$ occurs, we
denote by $\Gamma_1$ the minimal open self-avoiding path  from
$\mathsf B(S_1)$ to $\mathsf R(S_1')$ inside \[\ol{[-\lambda n-n, \lambda n+n]\times (\lambda n,\lambda n+n]}
\cup\ol{[-\lambda n-n, -\lambda n)\times [-\lambda n,\lambda n+n]}.\]
We first prove
\begin{equation}
    \label{eq:121}
    \Pp{\Gamma_1\lr[S_2']\mathsf L(S_2'), \mathcal E}\ge c_3 \Pp{\mathcal E},
  \end{equation}
and then
\begin{equation}
    \label{eq:122}
   \Pp{\mathcal A_{\lambda n,\lambda n+n}} \ge c_3 \Pp{\Gamma_1\lr[S_2']\mathsf L(S_2'), \mathcal E}.
  \end{equation}

  Let us begin with the proof for \eqref{eq:121}. Similarly to the proof of
  Theorem \ref{GL}, we construct a map
  \begin{equation}
    \Phi:
    \left\vert  \begin{array}{ccc}
        \{\Gamma_1\nlr[S_2']\mathsf L(S_2'),  \mathcal E\}&\to&  \{\Gamma_1\lr[S_2']\mathsf L(S_2'), \mathcal E\} \\
        \omega&\mapsto& \omega^{(z)}
      \end{array}
  \right. ,
  \end{equation}
  where the configuration $\omega^{(z)}$ is defined as follows. Given $\omega
  \in \{\Gamma_1\nlr[S_2']\mathsf L(S_2'), \mathcal E\}$, we first choose the
  minimal point $z\in R_1$ such that
  \begin{itemize}
  \item $z \in \ol{\Gamma_1(\omega)}$, and
  \item $z$ is connected to $\mathsf L(S_2')$ by an open self-avoiding path in
    $S_2'$.
  \end{itemize}
  Then we construct the  configuration $\omega^{(z)}$ by the following three
  steps:

\begin{enumerate}
\item Define $u^{\prime },v^{\prime }$ to be respectively the first and last
  vertices (when going from $\mathsf B(S_1)$ to $\mathsf R(S_1')$) of
  $\Gamma_1(\omega)$ which are in $\overline{B_r\left( z\right) }\cap R_1$.
  Choose $w^{\prime }$ on the boundary of $%
  \overline{B_r\left( z\right) }\cap R_1$, such that there exists an open
  self-avoiding path $\pi$ from $w^{\prime }$ to $\mathsf L(S_2')$ inside
  $S_2'$. The points $u'$, $v'$ and $w'$ are all distinct, and by
  Definition~\ref{def:rad1}, we can choose $u,v,w$ such that $\left( z,u\right)
  ,\left( z,v\right)$ and $\left( z,w\right) $ are three distinct edges with $%
  v \prec w $.

\item Close all the edges of $\omega $ in $\overline{B_{r+1}\left( z\right)
  }\cap \ol{ [\lambda n,\lambda n+n]^2}$ except the edges of
  $\overline{B_{r+1}\left( z\right) }\backslash \overline{B_{r}\left( z\right)
  }\cap \ol{[\lambda n,\lambda n+n]^2}$ which are in $\Gamma_1$ and $\pi$.

\item Open the edges $\left( z,u\right) ,\left( z,v\right) ,\left( z,w\right) $,
  together with three disjoint self-avoiding paths $\gamma _{u},\gamma
  _{v},\gamma _{w}$ inside $\ol{B_{r}(z)}\cap R_1$ connecting $u$ to $u^{\prime }$,
  $v$ to $v^{\prime }$ and $w$ to $w^{\prime } $.
\end{enumerate}

By construction, $\omega^{(z)} \in \{\Gamma_1\lr[S_2']\mathsf L(S_2'), \mathcal
E\}$. The uniqueness of the cluster crossing $S_1$ implies that the path
$\Gamma_1(\omega^{(z)})$ agrees with $\Gamma_1(\omega)$ from $\mathsf B(S_1)$ to
$u'$, and we can reconstruct the point $z$ by noting that $z$ is the only site
in $\Gamma_1 (\omega^{(z)})$ that is connected to $\mathsf L(S_2')$  without using any edge in $\Gamma_1 (\omega^{(z)})$. Therefore $\Phi $
satisfies the conditions of Lemma~\ref{combi0}, with
$s=\vert\overline{B_{r+1}}\vert $. Applying Lemma~\ref{combi0} then yields
\eqref{eq:121}.

We next move to the proof of \eqref{eq:122}. We define $\Gamma_2$ as the minimal
open crossing from $\mathsf T(S_2)$ to $\mathsf L(S_2')$ in \[ \ol{[-\lambda n-n, \lambda n+n]\times [-\lambda n -n,-\lambda n)}
\cup\ol{(\lambda n , \lambda n+n]\times [-\lambda n-n, \lambda n]}.\]

As before, we construct a map
\begin{equation}
      \Phi:
  \left\vert  \begin{array}{ccc}
\{\mathcal A_{\lambda n,\lambda n+n}^{c}, \Gamma_1\lr[S_2']\mathsf L(S_2'),  \mathcal E\}&\to&  \mathcal A_{\lambda n,\lambda n+n}\\
    \omega&\mapsto&\omega^{(z)}
  \end{array}
  \right. .
  \end{equation}

  Let $\omega \in \{ \mathcal A_{\lambda n,\lambda n+n}^{c},
  \Gamma_1\lr[S_2']\mathsf L(S_2'), \mathcal E\}$. We choose a 
point $z \in \ol{\Gamma_2(\omega)}$ which is connected to $\mathsf R(S_1)$ 
inside $S_1'$. We construct the
  configuration $\omega^{(z)}$ by essentially the same three steps as we did in
  proving \eqref{eq:121}, the only difference is now we do local modifications
  in $\ol{B_{r+1}(z)} \cap \ol{[-\lambda n -n,-\lambda n]^2}$, and
  $u^\prime,v^\prime$ are defined respectively to be the first and last vertices
  (when going from from $\mathsf T(S_2)$ to $\mathsf L(S_2')$) of
  $\Gamma_2(\omega)$ which are in $\overline{B_{r}\left( z\right) }\cap R_2$, and $w'$ is connected by a self-avoiding path to $\mathsf 
R(S_1)$ inside $S_1'$.

  To see that $\omega^{(z)} \in \mathcal A_{\lambda n,\lambda n+n}$, we first
  note that the local modification above does not change the 'unique clusters'
  inside $S_1$ and $S_2$ (they are measurable with respect to the edge variables
  in $S_1$ and $S_2$). Then, since $\omega \in \mathcal A_{\lambda n,\lambda n+n
}^{c}$, the path $\Gamma_2(\omega)$ cannot be connected to $\mathsf
  R(S_1)$. Otherwise, the fact that $ \Gamma_1\lr[S_2']\mathsf L(S_2')$ and the
  uniqueness of the cluster in $S_1$ would imply the existence of a circuit in
  $\ol{A_{\lambda n,\lambda n+n}}$. Therefore $\Gamma_2(\omega^{(z)})$ agrees
  with $\Gamma_2(\omega)$ up to $u'$, and it must go through $z$ and exit
  $\ol{B_{r}(z)}$ through $v'$. Then it agrees with $\Gamma_2(\omega)$ from
  $v'$ to the end. Thus $ \omega^{(z)} \in \mathcal A_{\lambda n,\lambda n +n}$, and one can reconstruct the point $z$ by noting that $z$ is the only
  site in $\Gamma_2 (\omega^{(z)})$ that is connected to $\mathsf R(S_1)$ without using
  any edge in $\Gamma_2 (\omega^{(z)})$. Therefore $\Phi $ satisfies the
  conditions of Lemma~\ref{combi0}, with $s=\vert\overline{B_{r+1}}\vert $.
  Applying Lemma~\ref{combi0} then yields \eqref{eq:122}, and thus concludes the
  proof.

\end{proof}

\subsection{Renormalization inputs}

\label{sec:renorm-inputs}

\begin{lemma}[Finite criterion for $\protect\theta (p)>0$]
  \label{lem:FC1} Fix $k\ge 0$ and $\eps>0$ such that $\eps<p_c(\mathds
  S_k)<1-\eps$. There exists a constant $c_{1}>0$, such that the following
  holds. For every $p\in [\eps, 1-\eps]$  and every $n \ge 4r$,
  \begin{equation}
    f_p\left( 2n,n-1\right) >1-c_1 \implies \mathbf{P}_p\left[ 0\overset{\mathbb{S}_{k}}{\longleftrightarrow }\infty \right] >0.\label{eq:73}  
  \end{equation}
\end{lemma}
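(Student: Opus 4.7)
The plan is a standard finite-size renormalization argument: we use the single-scale hypothesis $f_p(2n,n-1)>1-c_1$ (with $c_1$ chosen sufficiently small) to produce, at every doubling of scale $L_k=2^kn$, an open circuit in $\ol{A_{L_k,3L_k}}$ together with an open path connecting it to the circuit at the next scale, with failure probabilities summable in $k$. Borel-Cantelli then gives an infinite open cluster surrounding the origin.

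I would first bootstrap the hypothesis to all larger scales. Using Proposition~\ref{prop:standard-inequality} (Item~2) together with the square-root trick, the horizontal crossing probability of a $2n\times(n-1)$ rectangle passes to horizontal crossings of nearly-square $2n\times(2n-1)$ rectangles with only a constant loss in the error, and by $\pi/2$-rotation invariance also to vertical crossings of similar shape. Gluing two such crossings by Theorem~\ref{GL0} produces horizontal crossings of $4n\times 2n$ rectangles with improved error $\mathsf h(c_1)$, $\mathsf h\in \mathcal H$, satisfying $\mathsf h(c_1)\ll c_1$ as $c_1\to 0$ (the quantitative improvement near $1$ is extracted from the first part of the proof of Theorem~\ref{GL}, which essentially squares the complementary probability $1-x$ up to a constant). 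Iterating this at scales $L_k$ yields $f_p(2L_k,L_k)\ge 1-\eta_k$ with $\sum_k\eta_k$ as small as desired, provided $c_1$ is chosen sufficiently small to begin with.

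Four such rectangle crossings arranged around $\ol{B_{L_k}}$ and glued at the four corners (again by Theorem~\ref{GL0}) produce an open circuit in $\ol{A_{L_k,3L_k}}$ surrounding $\ol{B_{L_k}}$, with probability $\ge 1-C\eta_k$; denote this event by $\mathcal C_k$. To connect the minimal circuit $\Gamma_k$ of $\mathcal C_k$ to the minimal circuit $\Gamma_{k+1}$ of $\mathcal C_{k+1}$, I would use an open radial crossing of the intermediate annulus $\ol{A_{3L_k,L_{k+1}}}$ (which exists with probability $\ge 1-C\eta_k$ by Item~1 of Proposition~\ref{prop:standard-inequality}), whose planar projection necessarily crosses the projections of both circuits, and then adapt the local-modification argument in the proof of Theorem~\ref{cor:GL2} to the present nested geometry (here the two circuits are concentric rather than separated). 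This converts projection-intersection into a shared vertex; call the joint event $\mathcal J_k$, and we have $\mathbf P_p[\mathcal J_k^c]\le C\eta_k$.

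Once $c_1$ is chosen so that $\sum_k C\eta_k<\infty$, Borel-Cantelli ensures that almost surely all but finitely many $\mathcal C_k\cap \mathcal J_k$ occur. The resulting infinite connected union of nested open circuits surrounds $0$; since $0$ is connected to the innermost of these circuits with positive conditional probability via the finitely many edges in $\ol{B_{L_{k_0}}}$ (using FKG), one concludes $\mathbf P_p[0\overset{\mathbb S_k}\longleftrightarrow \infty]>0$. The main obstacle is making the bootstrap quantitative enough for summability of $\eta_k$ — the gluing lemma gives its best improvement only when the input probability is already very close to $1$, which forces an initial threshold $c_1$ much smaller than the naive one; the secondary obstacle is formulating the nested variant of Theorem~\ref{cor:GL2}, but this is a routine rerun of the local-modification argument in the concentric setting.
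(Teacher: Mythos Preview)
Your bootstrap step has a genuine gap. You assert that the first part of the proof of Theorem~\ref{GL} ``essentially squares the complementary probability $1-x$ up to a constant,'' and you rely on this to push $f_p(2L_k,L_k)\ge 1-\eta_k$ with $\sum_k\eta_k<\infty$. But the gluing lemma on slabs does \emph{not} give any polynomial improvement near~$1$. From Facts~1 and~2 in that proof one gets
\[
1-\mathsf h_0(x)\ \le\ (1+C_1^t)(1-x)+\frac{C_2}{t-8},
\]
and optimizing over $t$ (the paper takes $t=\log|\log(1-x)|$) yields at best $1-\mathsf h_0(x)\lesssim C/\log\log\frac{1}{1-x}$, which is \emph{worse} than $1-x$, not better. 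The squaring you invoke is a planar phenomenon (coming directly from FKG when the two crossings are forced to share a vertex); in the slab, each application of the gluing lemma costs probability rather than gaining it. Hence your iterated scheme does not produce improving, let alone summable, errors~$\eta_k$.

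The paper circumvents this entirely. It never iterates the gluing lemma across scales. Instead it (i) uses Item~1 of Proposition~\ref{prop:standard-inequality} a \emph{fixed} finite number of times to stretch $f_p(2m,m-1)>1-c_1$ to $f_p(2^\ell\lambda m,m-1)\ge 1-\eta$; (ii) invokes Theorem~\ref{cor:GL} to get circuits in each dyadic annulus $\ol{A_{2^i\lambda m,2^{i+1}\lambda m}}$ with some \emph{fixed} probability $c'>0$; (iii) uses \emph{independence} of the $\ell$ disjoint annuli, not gluing, to boost the circuit probability in $\ol{A_{m,2^\ell\lambda m}}$ to $1-(1-c')^\ell\ge 1-\eta$; and (iv) applies the circuit-gluing Theorem~\ref{cor:GL2} \emph{once}, feeding it into a $1$-dependent renormalized percolation on $3n\Z^2$ whose marginals exceed the Liggett--Schonmann--Stacey threshold. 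The crucial point is that the only place high probability is needed is at a single scale, and it is manufactured by independence of disjoint annuli, not by contractive gluing.
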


\begin{proof} 
  Fix $k\ge 0$ and $\eps>0$ as in the statement of the lemma. We first prove the
  following claim, which isolates the renormalization argument we are using.
  \begin{claim}
    There exist $\eta>0$ such that for every  $p\in [\eps, 1-\eps]$  and every
    $n\ge m \ge 4r$,
    \begin{equation}
      \label{eq:74}      
      \left.
        \begin{aligned}
          f_p(3n,2m)\ge 1-\eta\\ 
          \Pp{\mathcal A_{m,n}}\ge1-\eta 
        \end{aligned}\   \right\}\implies  \mathbf{P}_p\left[ 0\overset{\mathbb{S}_{k}}{\longleftrightarrow }\infty \right] >0.
    \end{equation}
  \end{claim}
  \begin{proof}[Proof of Claim]
    Fix $p_{0}<1 $ to be such that any $1$-dependent bond percolation measure on
    $\mathbb{Z}^{2}$ with marginals larger than $p_{0}$ produces an infinite
    cluster. (This is well defined by standard stochastic domination
    arguments~\cite%
    {ligget1997domination} or a Peierls argument~\cite{balister2005continuum}).
    Let $G=(V,E)$ be the graph with vertex set $V:=3n\Z^2$, and edge set
    $E:=\{\{v,w\}:|v-w|=3n\}$. It is a rescaled version of the standard
    two-dimensional grid $\mathbb{Z}^{2}$.

    We define a percolation process $X$ on $G$ follows. Consider a Bernoulli
    percolation process with density $p$ on the slab $\mathbb S_{k}$. Let
    $e=\{u,v\}\in E$ be a horizontal edge with  $v=u+(3n,0)$. Set $X(e)=1$ if

\begin{itemize}
\item There exists an open circuit in $\ol{A_{m,n}(u)}$ surrounding
  $\ol{B_m(u)}$, and an open circuit in $\ol{A_{m,n}(v)}$ surrounding
  $\ol{B_m(v)}$.
\item the minimal open self-avoiding circuit inside $\ol{A_{m,n}(u)}$ is
  connected to the minimal open self-avoiding circuit inside $\ol{A_{m,n}(v)}$
  by an open path that lies inside $\ol{u+ ([0,3n]\times [-m,m])}$.
\end{itemize}
Set $X(e)=0$ otherwise. Define $X(e)$ analogously when $e$ is a vertical edge.

By Theorem~\ref{cor:GL2}, the condition on the left hand side of \eqref{eq:74}
 implies
\begin{equation}
\mathbf{P}_{p}[ X(e)=1] \ge \mathsf h_1(f_p(3n, 2m)\Pp{\mathcal A_{m,n}}^2)\geq  \mathsf h_1((1-\eta)^3). \label{eq:3}
\end{equation}%
If we choose $\eta$ small enough, the above probability is larger than $p_0$.
Since the percolation process $X$ is $1$-dependent, there exists with positive
probability an infinite self avoiding path in $G$ made of edges $e$ satisfying
$X(e)=1$. This implies that in the slab, we have \mbox{$\Pp%
  {0\overset{\mathbb{S}_{k}}{\longleftrightarrow }\infty}>0$}. This ends the
proof of the claim.
\end{proof}
We now prove the lemma. By Theorem~\ref{cor:GL} one can first choose two
constants $\lambda>0$
and $c'>0$ such that for every $p\in[\eps,1-\eps]$ and every $n\ge 4r$ 
\begin{equation}
  \label{eq:75}
    f_p(2n,n-1)\ge 1/4 \implies \Pp{\mathcal A_{\lambda n,2\lambda n}} \ge c'.
\end{equation}
Then we choose a constant $\ell <\infty$ such that $(1-c')^\ell <\eta$. By
Item~1 of Proposition \ref{prop:standard-inequality}, we can finally choose
$c_1>0$ small enough such that
\begin{equation}
  \label{eq:76}
  f_p(2m,m-1)>1-c_1 \implies f_p(2^\ell\lambda m,m-1)\ge 1-\eta\ge 1/4.
\end{equation}
One can easily check that this choice of $c_1>0$ concludes the proof. Assume
that for some $m\ge 4r$, $f_p(2m,m-1)>1-c_1$. Eq.~\eqref{eq:76} and \eqref{eq:75}
give for every $0\le i\le \ell-1$, $\Pp{\mathcal A_{2^i\lambda m,2^{i+1}\lambda
    m}} \ge c'$.  Therefore, by independence we have
  \begin{equation}
    \label{eq:58}
    \Pp{\mathcal A_{m,2^\ell \lambda m}} \ge 1-(1-c')^\ell\ge  1-\eta.
  \end{equation}
  The claim above applied to $n=2^\ell \lambda m$ implies that $\mathbf{P}_p\left[ 0\lr[\mathbb{S}_{k}]\infty \right] >0$.

\end{proof}

\begin{lemma}[Finite criterion for exponential decay]
\label{lem:FC2} There exists an absolute constant $c_{2}>0$, such that $%
f_p(n,2n)<c_{2}$ for some $n\geq 1$, implies that for every $m$, $\mathbf{P}_{p}%
\left[ 0\overset{\mathbb{S}_{k}}{\longleftrightarrow }\partial \ol{B _{m}}%
\right] \leq e^{-cm}$. 
\end{lemma}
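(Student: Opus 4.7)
The plan is a block-renormalization and stochastic-domination argument that lifts the one-scale hypothesis into exponential decay.

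First, the hypothesis $f_p(n,2n)<c_2$ yields a one-arm estimate at scale $n$. By a union bound over the four sides of $\partial\bar B_n$ through which the open path to $\partial\bar B_n$ may exit, together with translation invariance in $\mathbb Z^2$,
\[
\mathbf P_p\bigl[0 \lr \partial\bar B_n\bigr] \le 4\,f_p(n,2n) < 4 c_2,
\]
since, for instance, an exit through the right side confines the path to a translate of $\ol{[0,n]\times[-n,n]}\times [k]$, which has exactly the law of the $n \times 2n$ rectangle crossing.

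Second, I would introduce a $1$-dependent site percolation on a rescaled lattice $L\mathbb Z^2$ (with $L$ a suitable constant multiple of $n$), declaring a site $x$ \emph{bad} when an appropriate local one-arm event occurs in $\bar B_L(Lx)$. Bad events depend only on edges in a bounded neighborhood, so they are $1$-dependent at the renormalized scale, and their marginal probability is bounded above by an absolute constant times $f_p(n,2n)$. Choosing $c_2$ sufficiently small (in a universal way) drives this marginal below the Liggett--Schonmann--Stacey domination threshold, so that the bad field is stochastically dominated by an independent Bernoulli site percolation on $\mathbb Z^2$ with arbitrarily small parameter---in particular, strictly subcritical.

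Third, any open path from $0$ to $\partial\bar B_m$ in the slab projects to a connected chain of bad sites in $L\mathbb Z^2$ extending from the origin to renormalized distance of order $m/L$. By subcriticality of the dominating independent percolation and the standard Peierls-type exponential bound for connection probabilities in subcritical planar site percolation, such chains occur with probability at most $e^{-cm}$ for some $c = c(n,k,c_2)>0$, which finishes the proof.

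The main technical difficulty lies in the second step: ensuring that the marginal bad probability is controlled by an absolute constant times $f_p(n, 2n)$ and not by $f_p(n, 4n)$. A naive annulus-crossing decomposition at scale $L$ produces arms of aspect ratio $1{:}4$ rather than $1{:}2$, which the hypothesis does not bound. One resolves this either by defining the bad event as a single-exit-direction one-arm event (preserving the $1{:}2$ aspect ratio at the cost of summing over four orientations), or by invoking the gluing lemmas of Section~\ref{sec:gluing-lemma} to realize a $1{:}4$ arm event as a controlled combination of $1{:}2$ rectangle crossings. Once the marginal is bounded by an absolute constant times $f_p(n,2n)$, the rest of the argument is routine.
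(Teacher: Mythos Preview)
Your block-renormalization route is sound in outline and can be completed, but it is genuinely different from the paper's proof. The paper uses Kesten's recursive inequality instead of stochastic domination: from the independence bound $f_p(2n,4n)\le f_p(n,4n)^2$ together with the elementary covering bound $f_p(n,4n)\le 5\,f_p(n,2n)$ (five translated $n\times 2n$ rectangles cover $[0,n]\times[0,4n]$), one obtains $f_p(2n,4n)\le 25\,f_p(n,2n)^2$. With $c_2=1/25$ this forces $f_p(2^jn,2^{j+1}n)$ to decay doubly exponentially in $j$, and the one-arm bound follows at once. No LSS domination, no Peierls counting, and the constant $c_2$ is explicit and independent of $k$ and $p$.

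Regarding the ``main technical difficulty'' you flag: the covering inequality $f_p(n,4n)\le 5\,f_p(n,2n)$ is exactly the elementary fix you need, and it is what the paper uses. Your alternative of invoking the gluing lemmas of Section~\ref{sec:gluing-lemma} would work but is overkill, and more importantly the gluing constants depend on $k$ and on $\eps$, so that route would not deliver an \emph{absolute} $c_2$ as the statement promises. Once you plug in the covering bound, your scheme goes through (the bad field is finite-range rather than strictly $1$-dependent, but LSS handles that), at the cost of importing LSS and subcritical exponential decay as black boxes where the paper's recursion is entirely self-contained.
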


\begin{proof}
  This result is standard and can be proved in various ways. We present here a
  renormalization argument of Kesten \cite{kesten1982percolation}. Let $n\ge1$.
  If the rectangle $[0,2n]\times[0,4n]$ is crossed horizontally, then both
  rectangles $[0,n]\times[0,4n]$ and $[n,2n]\times[0,4n]$ must be crossed
  horizontally. Using translation invariance and independence, we obtain
  \begin{equation}
    \label{eq:23}
    f_p(2n,4n)\le f_p(n,4n)^2.
  \end{equation}
  Now consider the covering of $[0,n]\times[0,4n]$ by the five $n$ times $2n$
  rectangles $R_1,\ldots,R_5$ illustrated on Fig.~\ref{fig:covering}.
  \begin{figure}
    \centering
    \hfill
    \begin{minipage}[t]{.2\linewidth}
      \includegraphics[width=.98\linewidth]{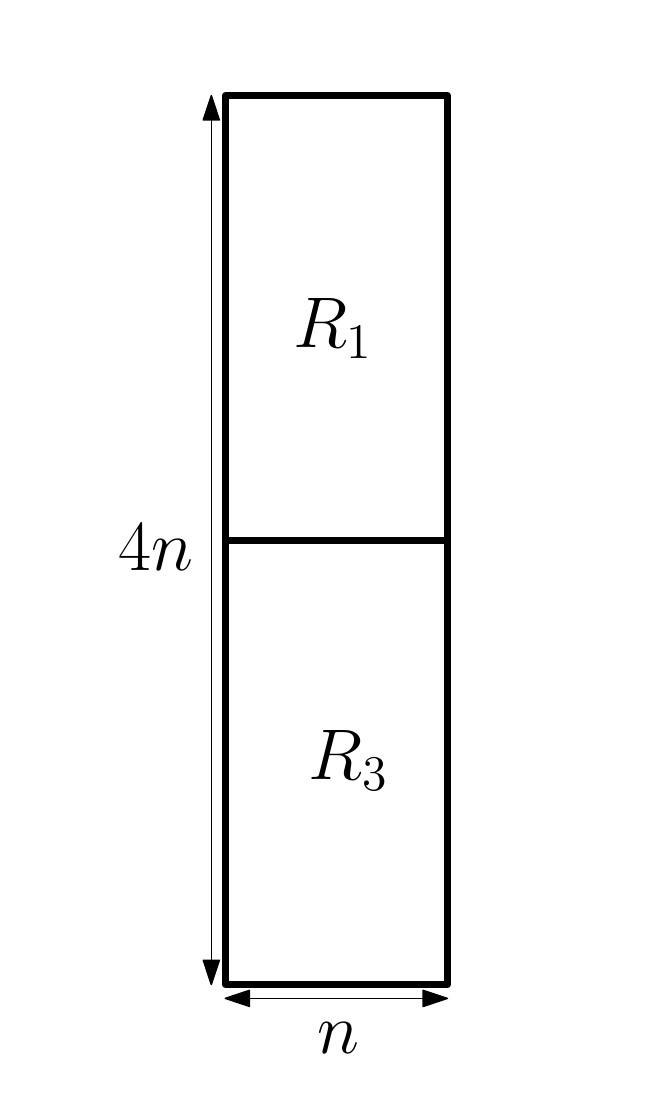}
    \end{minipage}
    \hfill
    \begin{minipage}[t]{.2\linewidth}
      \includegraphics[width=.98\linewidth]{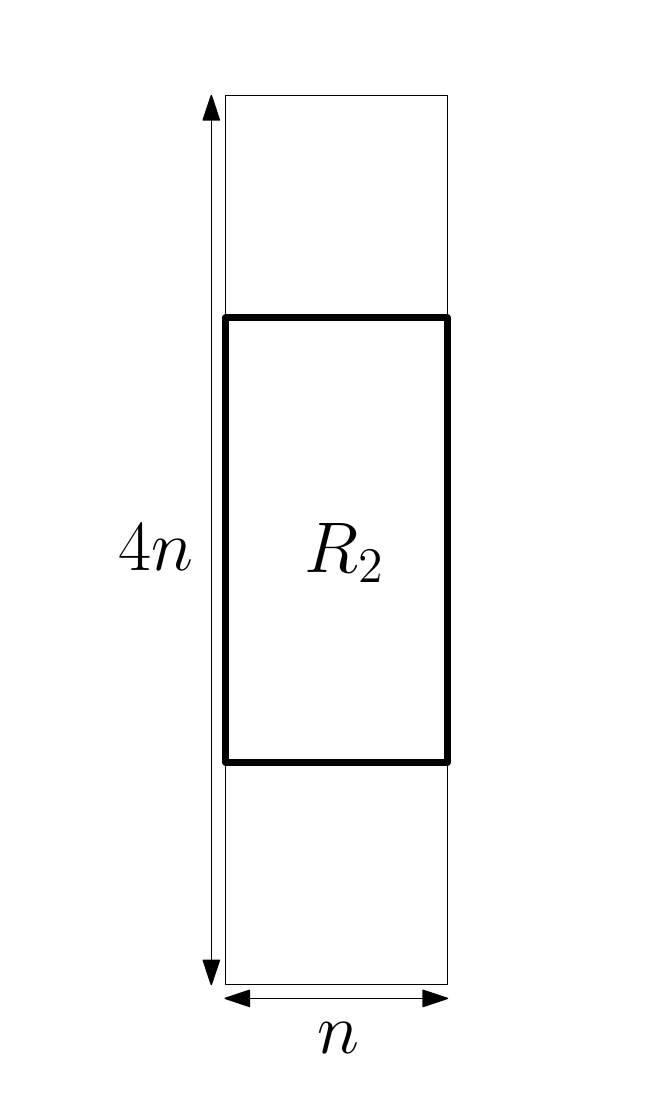}
    \end{minipage}
    \hfill
    \begin{minipage}[t]{.2\linewidth}
      \includegraphics[width=.98\linewidth]{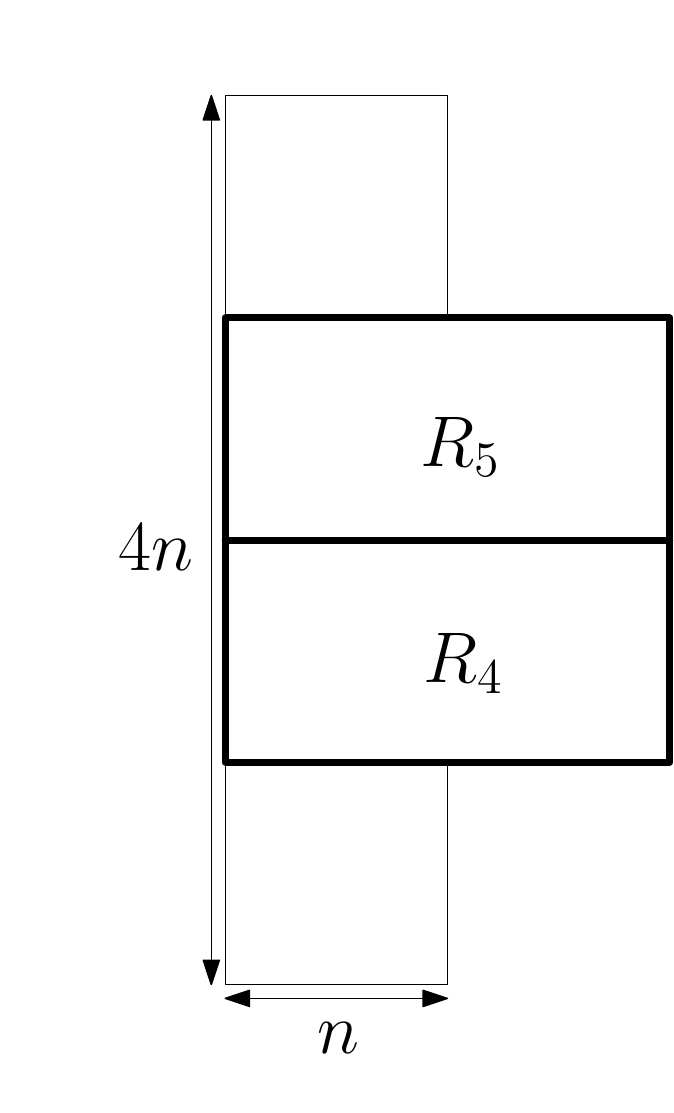}
    \end{minipage}
    \hfill
    \caption{A covering of $[0,n]\times[0,4n]$ by five $n$ times $2n$ rectangles.}\label{fig:covering}
  \end{figure}
  If $[0,n]\times[0,4n]$ is crossed horizontally then at least one of the five
  rectangles $R_1,\ldots, R_5$ must be crossed in the easy direction. Using
  translation invariance and the union bound, we find
  \begin{equation}
    \label{eq:24}
    f_p(n,4n)\le 5 f_p(n,2n).
  \end{equation}
  Together with Eq.~\eqref{eq:23} we obtain for every $n\ge1$,
  \begin{equation}
    \label{eq:25}
    f_p(2n,4n)\le 25 f_p(n,2n)^2.
  \end{equation}
  If $f_p(n_0,2n_0)<1$ for some $n_0\ge1$, Eq.~\eqref{eq:25} implies by
  induction that the sequence $f_p(n,2n)$ decays exponentially fast in $n$,
  which easily implies that the probability for $0$ to be connected to $\partial
  \ol{B _{n}}$ decays exponentially.
\end{proof}

\begin{lemma}
\label{lem:renormalization-inputs} For critical Bernoulli percolation on
$\mathbb S_{k}$ (i.e., $p=p_{c}(\mathbb S_{k})$) we have $f_p(2n,n-1)\leq 1-c_{1}$ and $%
f_p(n,2n)\geq c_{2}$ for every $n \ge 4r$.
\end{lemma}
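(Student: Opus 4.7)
The plan is to deduce both bounds at $p=p_c(\mathbb S_k)$ from the two finite criteria (Lemma \ref{lem:FC1} and Lemma \ref{lem:FC2}), combined respectively with the non-percolation result of \cite{DST} at criticality and the definition of $p_c$ via continuity in $p$.

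For the upper bound $f_{p_c}(2n,n-1)\le 1-c_1$ for $n\ge 4r$, I argue by contradiction. Suppose there exists some $n\ge 4r$ with $f_{p_c}(2n,n-1)>1-c_1$. Since $\eps<p_c(\mathbb S_k)<1-\eps$ (an elementary fact that also justifies the assumption in Lemma \ref{lem:FC1}), applying Lemma \ref{lem:FC1} at $p=p_c$ would give $\mathbf P_{p_c}[0\lr[\mathbb S_k]\infty]>0$. This contradicts the main theorem of \cite{DST}, which asserts that critical Bernoulli percolation on $\mathbb S_k$ has no infinite cluster a.s., i.e.\ $\theta(p_c)=0$. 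Hence the assumed inequality cannot hold, and the upper bound follows with the same $c_1$ provided by Lemma \ref{lem:FC1}.

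For the lower bound $f_{p_c}(n,2n)\ge c_2$, I also argue by contradiction, this time using continuity in $p$. For any fixed $n$, the function $p\mapsto f_p(n,2n)$ is a polynomial in $p$, hence continuous. If there were some $n\ge 1$ with $f_{p_c}(n,2n)<c_2$, then we could find $\delta>0$ such that $f_p(n,2n)<c_2$ for every $p\in[p_c,p_c+\delta]$. Lemma \ref{lem:FC2} would then yield exponential decay of the one-arm connection probability at every such $p$, and in particular $\theta(p)=0$ for all $p\in[p_c,p_c+\delta]$. This contradicts the defining property of $p_c=p_c(\mathbb S_k)$, namely that $\theta(p)>0$ for every $p>p_c$. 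Hence no such $n$ exists and the lower bound holds with the absolute constant $c_2$ from Lemma \ref{lem:FC2}.

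Both steps are essentially just the contrapositives of the two finite criteria. The only subtlety is a modeling one for the second bound: Lemma \ref{lem:FC2} gives exponential decay at a single value of $p$, and one must couple it with continuity in $p$ to reach a contradiction with the definition of $p_c$; this is the only step that is not completely automatic, and is the main (very mild) obstacle.
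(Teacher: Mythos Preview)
Your proof is correct, but for the first inequality you take a different route from the paper. You invoke the result of \cite{DST} that $\theta(p_c)=0$ to obtain a direct contradiction. The paper instead avoids citing \cite{DST} altogether: it observes that the set $\{p\in(\eps,1-\eps): f_p(2n,n-1)>1-c_1 \text{ for some }n\ge 4r\}$ is open (by continuity of $p\mapsto f_p$) and, by Lemma~\ref{lem:FC1}, is contained in $[p_c,1-\eps)$; since an open set cannot have $p_c$ as its minimum, $p_c$ is excluded. This is exactly the continuity trick you use for the \emph{second} inequality, applied symmetrically to the first. The paper's version is thus more self-contained---it needs only the definition of $p_c$ and does not rely on the prior non-percolation result---whereas your version is shorter but imports \cite{DST}. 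There is no circularity in your approach, since \cite{DST} is logically prior to this paper, but it is worth noting that the paper's argument makes the lemma independent of that reference. For the lower bound your argument and the paper's ``similarly'' are essentially identical.
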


\begin{proof}
Take $\eps >0$ such that $p_c \in (\eps, 1-\eps)$. Consider the set \\$\{p\in (\eps, 1-\eps):\text{there exists $n \ge 4r$ s.t. }%
f_p(2n,n-1)>1-c_{1}\}$. It is open and does not intersect $[0,p_{c}(\mathbb %
S_{k}))$ (by Lemma~\ref{lem:FC1}). Thus,
$p_{c}$ does not belong to this set, and therefore $f_p(2n,n-1)\leq 1-c_{1}$ for
every $n \ge 4r$ and $p\leq p_{c}$. Similarly, the inequality $%
f_p(n,2n)\geq c_{2}$ at $p=p_{c}$ follows from Lemma~\ref{lem:FC2}.
\end{proof}

\subsection{The RSW-Theorem: positive probability version\label{sec:RSWpf}}

\begin{theorem}
\label{thm:RSW} Fix $\eps>0$ and $k\ge1$. For $p\in \lbrack \eps,1-\eps]$, the following implication holds
for the horizontal crossing probability $f(m,n)=f_p(m,n)$.
\begin{equation}
\text{If $\displaystyle\inf_{n\ge1}{f(n,2n)}>0$, then
$\displaystyle\inf_{n\ge1}{f(2n,n)}>0$.}\label{eq:48}
\end{equation}

\end{theorem}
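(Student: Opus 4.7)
The plan is to adapt the classical planar Russo--Seymour--Welsh argument to the slab, using the gluing lemmas (Theorem~\ref{GL0} and Theorem~\ref{GL}) of Section~\ref{sec:gluing-lemma} to replace the lowest-crossing plus FKG-conditioning step that relies on genuine planarity. The hypothesis provides cheap short-side crossings of tall rectangles, and the conclusion asks for cheap long-side crossings of wide rectangles.

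First I use the $\pi/2$-rotational symmetry of $\mathbb S_k$ (acting on the two $\mathbb Z$-directions) to rephrase the hypothesis as
\[\mathbf P_p\bigl[\mathcal V(\ol{[0,2n]\times[0,n]})\bigr]\ge c>0,\]
where $\mathcal V$ denotes the vertical (short-side) crossing event. Next, in the tall rectangle $T=\ol{[0,n]\times[0,2n]}$, I apply the square-root trick (Corollary~\ref{cor:SRT}) together with the reflection symmetries of $T$ across its horizontal and vertical midlines to decompose the event $\mathcal H(T)$ according to the quadrant of $\partial T$ in which the two endpoints of the horizontal crossing lie. Up to a factor of $1/4$, this yields either (i) a crossing of $T$ with both endpoints in the bottom halves of the left and right sides (a ``same-quadrant'' crossing) with probability at least $c/4$, or (ii) an ``opposite-diagonal'' crossing of $T$ with probability at least $c/4$.

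In case (ii), reflection across the vertical midline of $T$ gives that both opposite diagonals hold, each with probability at least $c/4$. An application of Theorem~\ref{GL0} inside $T$, with the four corner-quadrants of $\partial T$ as the sets $A,B,C,D$ so that $A\lr[T]B$ is one diagonal and $C\lr[T]D$ is the other, then produces a same-quadrant crossing with lower-bounded probability: the projection-intersection hypothesis holds because any two opposite diagonals of a planar rectangle must cross by the Jordan-curve argument applied to their projections onto $\mathbb Z^2$. Thus in either case there exists a constant $c_1=c_1(c)>0$ such that the same-quadrant horizontal crossing event of $T$ has probability at least $c_1$ uniformly in $n$. The final step combines this with an $x$-translated copy of the same event, via the FKG inequality and a further use of Theorem~\ref{GL} in the spirit of Proposition~\ref{prop:standard-inequality}, to yield a long horizontal crossing of a wide rectangle of bounded aspect ratio with positive probability, which gives $\inf_n f_p(2n,n)>0$.

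The main obstacle is the gluing step in case (ii): in the slab, two paths whose projections onto $\mathbb Z^2$ intersect need not meet in three dimensions, which is precisely the failure that prevents a direct translation of planar RSW. Theorem~\ref{GL0} overcomes this by performing local modifications that force the required intersection at a positive conditional rate, and the geometric set-up (choice of the quadrants $A,B,C,D$, and the final stitching in a wide rectangle) must be arranged so that the projection-intersection hypothesis of the lemma is automatically satisfied and the output of the gluing is the desired long-direction crossing. All remaining bookkeeping (symmetry, FKG, square-root) is routine once this geometric set-up has been fixed.
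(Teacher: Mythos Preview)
Your reduction in case~(ii) via Theorem~\ref{GL0} is correct: setting $A=\ol{\{0\}\times[0,n)}$, $B=\ol{\{n\}\times(n,2n]}$, $C=\ol{\{n\}\times[0,n)}$, $D=\ol{\{0\}\times(n,2n]}$ in $T=\ol{[0,n]\times[0,2n]}$, the projection-intersection hypothesis holds by a Jordan-curve argument, and the output $C\lr[T] A$ is indeed the desired same-quadrant crossing. The gap is in the final stitching step. After cases~(i) and~(ii) you hold an open path in $T$ from $\ol{\{0\}\times[0,n]}$ to $\ol{\{n\}\times[0,n]}$, and you propose to combine it with an $x$-translate ``in the spirit of Proposition~\ref{prop:standard-inequality}''. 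But that proposition starts from a crossing that already spans the long direction; its gluing works because a \emph{vertical} crossing separates the two short sides of a rectangle, so the projection-intersection hypothesis of Theorem~\ref{GL0} is forced. Your same-quadrant crossing does not separate anything: if you place it next to a translated copy in $T'=\ol{[n/2,3n/2]\times[0,2n]}$, the two paths can avoid each other entirely in projection (one high in the overlap strip, the other low), so neither Theorem~\ref{GL0} nor Theorem~\ref{GL} applies. This is exactly the step where the classical lowest-crossing RSW argument breaks in the slab, and the gluing lemmas as stated do not rescue it.

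The paper circumvents this by a different decomposition, following the robust RSW scheme of~\cite{tassion2015crossing}. One fixes a rectangle $S$ and a larger rectangle $R$, with disjoint boundary segments $X,Y\subset\mathsf R(R)$, and works with the event $\mathcal A$ that $\mathsf L(S)\lr[S] X$ and $\mathsf B(R)\lr[R] Y$ both occur. Writing $\Gamma=\Gamma^S_{\min}(X,\mathsf L(S))$, one splits into three cases: $Y\lr[R]X$, $Y\lr[R]\mathcal N(\bar\Gamma,3r)$, or neither. The first two are handled directly by Theorems~\ref{GL0} and~\ref{GL}. The third case is the key new idea: one explores the open cluster $\mathscr C$ of $X$ together with the $3r$-neighbourhood of $\bar\Gamma$, and observes that on the unexplored region the path from $Y$ to $\mathsf B(R)$ must reach the reflection $\ol{\gamma'}$ of $\Gamma$ across the vertical midplane; a final gluing inside a symmetric domain (Lemma~\ref{lem:RSW3}) then produces the long crossing $f(14n,13n)\ge c$. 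It is this exploration-plus-reflection manoeuvre, not a direct stitching of translated crossings, that replaces the planar lowest-crossing step.
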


All this section is devoted to the proof of this theorem. Under the assumption of the theorem, we can choose a
constant $c_0>0$ such that for every $n\ge1$,
\begin{equation}
  \label{eq:38}
  f(n,2n)\ge c_0.
\end{equation}
We will use several gluing lemmas presented in Section~\ref{sec:gluing-lemma}.
To this end, we fix a number $r\ge 3$ as in Definition~\ref{def:rad1}. In the
proof below we use various constants denoted $c_j$, each is independent of $n$.

Define $S=\ol{[0,7n]\times[0,8n]}$, $R=\ol{[-7n,7n]\times[0,13n]}$, $
X=\ol{\{7n\}\times[0,4n]}$ and $Y=\ol{\{7n\}\times[5n,13n]}$ (see
Fig.~\ref{fig:1} for an illustration). Let $\mathcal A$ be the event that there exist
\begin{itemize}
\item an open path in $S$ from its left side to $X$ and
\item an open path in $R$ from its bottom side to $Y$.
\end{itemize}

\begin{figure}[htbp]
  \centering
  \includegraphics[width=.43\linewidth]{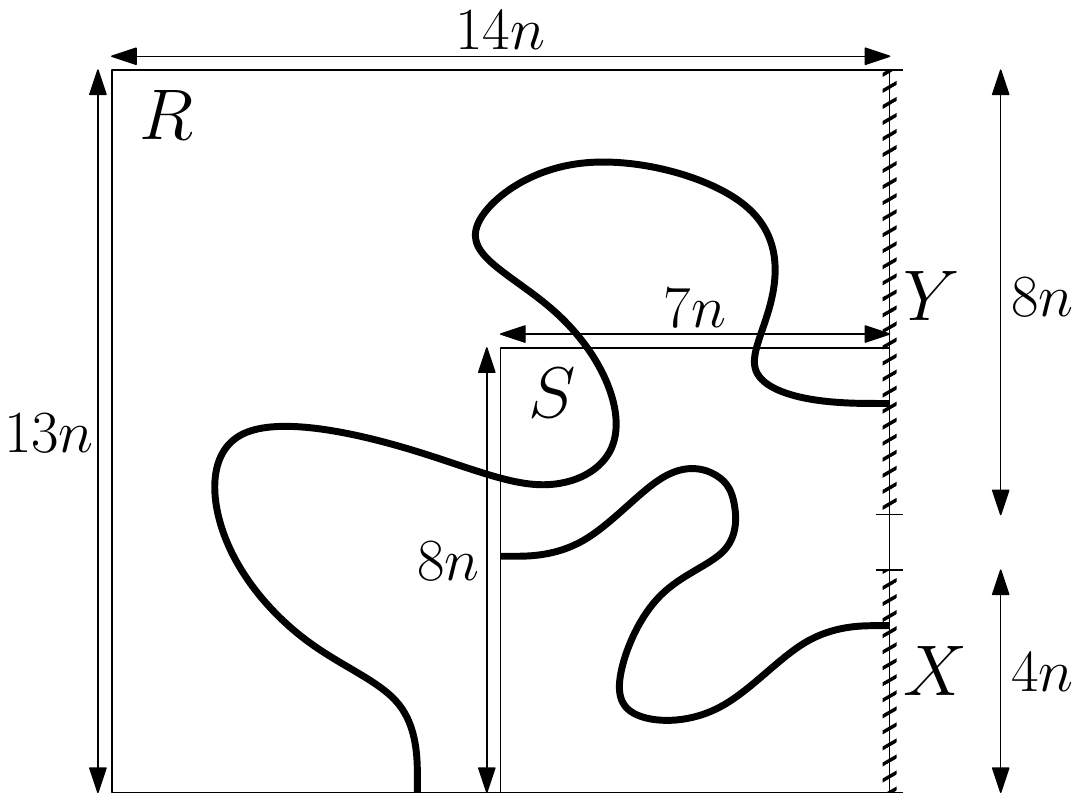}
  \caption{Diagrammatic representation of the event $\mathcal
    A$.}
  \label{fig:1}
\end{figure}

\begin{lemma}\label{lem:RSW1}
  Assume that Eq.~\eqref{eq:38} holds. Then there exists a constant $c_4>0$
  independent of $n\ge 4r$ such that
  \begin{equation}
    \label{eq:39}
    \Pp{\mathcal A}\ge c_4.
  \end{equation}
\end{lemma}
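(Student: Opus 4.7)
The strategy is to decompose $\mathcal A = E_1 \cap E_2$ with $E_1 = \{\{0\}\times[0,8n] \lr[S] X\}$ and $E_2 = \{[-7n,7n]\times\{0\} \lr[R] Y\}$, both increasing. By the Harris--FKG inequality (Theorem~\ref{thm:FKG}), $\Pp{\mathcal A} \ge \Pp{E_1}\Pp{E_2}$, so it suffices to bound each factor below by a positive constant independent of $n \ge 4r$.

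To bound $\Pp{E_1}$: applying the hypothesis~\eqref{eq:38} at scale $7n$ gives $f(7n, 14n) \ge c_0$. Proposition~\ref{prop:standard-inequality}, Item~2, used with the substitution $n \mapsto 7n$, $\kappa = 1/7$, $j = 7$, then produces
\[
f(7n, 8n) \ge \mathsf{h}_2^{6}(c_0) =: c_1 > 0.
\]
The slab $\mathbb S_k$ is invariant under the reflection $y \mapsto 8n - y$ about the horizontal midline of $S$, so the two events ``the horizontal crossing of $S$ ends in the lower half $X$ of the right side'' and ``\dots ends in the upper half $\{7n\}\times[4n,8n]$'' are equi-probable and together cover the full horizontal crossing event. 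The square-root trick (Corollary~\ref{cor:SRT}) then yields $\Pp{E_1} \ge 1 - \sqrt{1 - c_1}$.

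To bound $\Pp{E_2}$: the same scaling applied to the larger box $R$ yields $f(13n, 14n) \ge \mathsf{h}_2^{12}(c_0) =: c_2 > 0$, which by the transposition symmetry of $\mathbb S_k$ equals the probability of a vertical crossing of $R$ from bottom to top. We invoke the gluing lemma (Theorem~\ref{GL0}) inside $R$ with $A$ and $B$ slight shrinkings of the bottom and top of $R$, $C = Y$, and $D$ a slight shrinking of the left side of $R$ (the shrinkages remove the corners and so enforce the pairwise disjointness of projections). The projection-intersection hypothesis holds because in 2D any bottom-to-top path in $R$ separates the left side from $Y$, which lies on the right. Theorem~\ref{GL0} then gives $\Pp{E_2} \ge \mathsf{h}_0\bigl(c_2 \wedge \Pp{Y \lr[R] D}\bigr)$. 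The remaining factor is bounded below by iterating the $E_1$-style argument at a larger scale: Proposition~\ref{prop:standard-inequality} applied to the $7n \times 13n$ sub-rectangle $R_r = [0,7n]\times[0,13n]$ gives $f(7n, 13n) \ge \mathsf{h}_2(c_0)$, and then reflection symmetry about the midline of $R_r$ combined with the square-root trick localizes a horizontal crossing of $R_r$ to end in $Y$. A further gluing (using Theorem~\ref{GL0} in the left half of $R$ in a similar way) extends this connection to reach the left side $D$.

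The main difficulty is this last step: bounding $\Pp{Y \lr[R] D}$ using only easy-direction crossing estimates (any hard-direction bound would essentially be the conclusion of Theorem~\ref{thm:RSW} itself), which forces the iterated use of reflection symmetries, the square-root trick, and the gluing lemma at several scales. Once $\Pp{E_2} \ge c_2' > 0$ is obtained, assembling the pieces yields $\Pp{\mathcal A} \ge \Pp{E_1}\Pp{E_2} \ge c_4 > 0$.
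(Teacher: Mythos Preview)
Your overall structure matches the paper's: use Harris--FKG to factor $\mathcal A$ into the two increasing events, then bound each. Your treatment of $\Pp{E_1}$ is fine (the paper uses a symmetry plus union bound rather than the square-root trick, but both work).

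The gap is in bounding $\Pp{E_2}$. Your application of Theorem~\ref{GL0} in $R$ with $A,B$ the bottom and top and $C=Y$, $D=\mathsf L(R)$ is correctly set up, but it requires the input $\Pp{Y\lr[R]\mathsf L(R)}\ge c$. You try to obtain this by first getting $\{0\}\times[0,13n]\lr[R_r]Y$ in the right half $R_r$ (which is fine: easy direction plus square-root trick), and then claiming ``a further gluing \dots\ extends this connection to reach the left side $D$.'' This last step cannot be completed with the tools you invoke. The path you have constructed terminates on the centre line $\{0\}\times[0,13n]$, which is interior to $R$; Theorem~\ref{GL0} requires all four sets to lie in $\partial S$, so you cannot feed the centre line in as one of the arc pieces. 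Attempting instead to work in $R_l$ does not help, since the path you have lives entirely in $R_r$. Any honest setup of Theorem~\ref{GL0} that would yield $\Pp{Y\lr[R]\mathsf L(R)}$ forces one of the two input probabilities to be a horizontal crossing of a rectangle at least as wide as it is tall --- and that is exactly the hard-direction estimate that Theorem~\ref{thm:RSW} (of which this lemma is a step) is trying to prove. In short, the ``further gluing'' you allude to is essentially the RSW statement itself, and the argument is circular.

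The paper avoids this by a different trick: it enlarges the box vertically to $K=\ol{[-7n,7n]\times[-3n,13n]}$, which is $16n$ tall and $14n$ wide, so the \emph{horizontal} crossing of $K$ is easy direction and satisfies $f(14n,16n)\ge c_2$. Moreover $Y$ is exactly the top half of $\mathsf R(K)$, so symmetry gives $\Pp{\mathsf L(K)\lr[K]Y}\ge c_2/2$. Then a case analysis finishes: either the crossing path dips below $\mathsf B(R)$, giving $\mathsf B(R)\lr[R]Y$ directly, or it stays inside $R$, giving $\Pp{\mathsf L(R)\lr[R]Y}\ge c_2/4$ for free --- and only then does one apply Theorem~\ref{GL0}. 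The enlarged box $K$ and the case split are the missing idea.
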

\begin{proof}
  We will prove that there exist constants $c_1,c_3>0$ such that
  \begin{align}
    \label{eq:7}&\Pp{\mathsf L(S)\lr[S] X}\ge c_1\text{, and}\\
    \label{eq:12}&\Pp{ \mathsf B(R)\lr[R] Y}\ge c_3.
  \end{align}
  Lemma~\ref{lem:RSW1} then follows by the Harris-FKG inequality with $c_4 = c_1c_3$.

  By Eq.~\eqref{eq:38}, we have $f(7n,14n)\ge c_0$. Therefore, by Item 2 of
  Proposition~\ref{prop:standard-inequality} (with $\kappa=1/7$ and $j=7$), we have $f(7n,8n)\ge \mathsf
  h_2^6(c_0)>0$. In other words, the rectangle $S$ is crossed horizontally by
  an open path with probability larger than $\mathsf h_2^6(c_0)$. Using a
  symmetry and the union bound, we obtain Eq.~\eqref{eq:7} with
  $c_1=\mathsf h_2^6(c_0)/2$.

  Let us now prove Eq.~\eqref{eq:12}. Since  $f(14n,28n)$ and
  $f(13n,26n)$ are at least $c_0$,
  Item 2 of Proposition~\ref{prop:standard-inequality} implies
  \begin{align}
    \label{eq:31}&f(14n,16n)\ge \mathsf h^{6}_2(c_0) \ge c_2,\text{ and}\\
    \label{eq:32}&f(13n,14n)\ge \mathsf h^{12}_2(c_0) \ge c_2,
  \end{align}
  where $c_2=\min\{\mathsf h^{6}_2(c_0), \mathsf h^{12}_2(c_0)\}$. Consider the event that there exists inside \\
  $K=\ol{[-7n,7n]\times[-3n,13n]}$ an open path $\Pi$ from $\mathsf L(K)$ to
  $Y$. Note that $Y$ is the top half of $\mathsf R(K)$. By \eqref{eq:31} and a symmetry, this occurs with probability larger
  than $c_2/2$. When the path $\Pi$ exists, either it touches the bottom side of
  $R$, or it remains inside $R$. Hence, by the union bound, at least one of the
  following two cases holds:
  \begin{itemize}
    \item $\Pp{ \mathsf B(R)\lr[R] Y} \ge c_2/4$;
    \item $\Pp{ \mathsf L\,(R)\lr[R] Y} \ge c_2/4$.
  \end{itemize}
  The first case gives exactly \eqref{eq:12}. In the second case we can conclude the proof by
  using the Theorem~\ref{GL0} gluing lemma inside $R$. We would know that $R$
  is crossed from left to right by an open path with probability larger than
  $c_2/4$, and from top to bottom with probability larger than $c_2$ (by
  Eq.~\eqref{eq:32}). Theorem~\ref{GL0} would then imply Eq.~\eqref{eq:12} with $c_3=\mathsf h_0(c_2/4)$.
\end{proof}

We now investigate a possible geometry of connecting paths when $\mathcal{A}$
occurs, which will be used near the end of the proof of Theorem~\ref{thm:RSW}.
Let $\gamma$ be a deterministic path from $X$ to $\mathsf L(S)$ in $S$ such that
$\gamma\cap Y=\emptyset$. Write $\gamma'$ for the symmetric reflection of
$\gamma$ through the plane $\{0\}\times\R^2$. Notice that the set
$\overline{\gamma \cup \gamma'}$ disconnects the top side $\mathsf T(R)$ of $R$
from its bottom side $\mathsf B(R)$, in the sense that any path from top to
bottom in $R$ must intersect $\overline{\gamma \cup \gamma'}$. Let $K_0(\gamma)$
be the connected component of $\mathsf T(R)$ in $R\setminus\overline{\gamma \cup
  \gamma'}$. Then, set $K(\gamma)=(K_0(\gamma) \cup \partial
K_0(\gamma))\setminus \mathcal N(\ol\gamma,3r)$, where we note that every edge in
the boundary, $\partial K_0(\gamma)$, of $K_0(\gamma)$ is between a vertex in
$K_0(\gamma)$ and one in $\overline{\gamma \cup \gamma'}$. We recall that $r$ is
given by Definition~\ref{def:rad1}, and $\mathcal N(\ol\gamma,3r)$ is the set of
vertices within sup-norm distance $3r$ of $\ol\gamma$. Define $\mathcal C_\gamma$
as the event that there exists an open path in $K(\gamma)$ from $Y$ to
$\overline{\gamma'}$. (See Fig.~\ref{fig:2}.)
\begin{figure}[htbp]
  \centering
  \includegraphics[width=.43\linewidth]{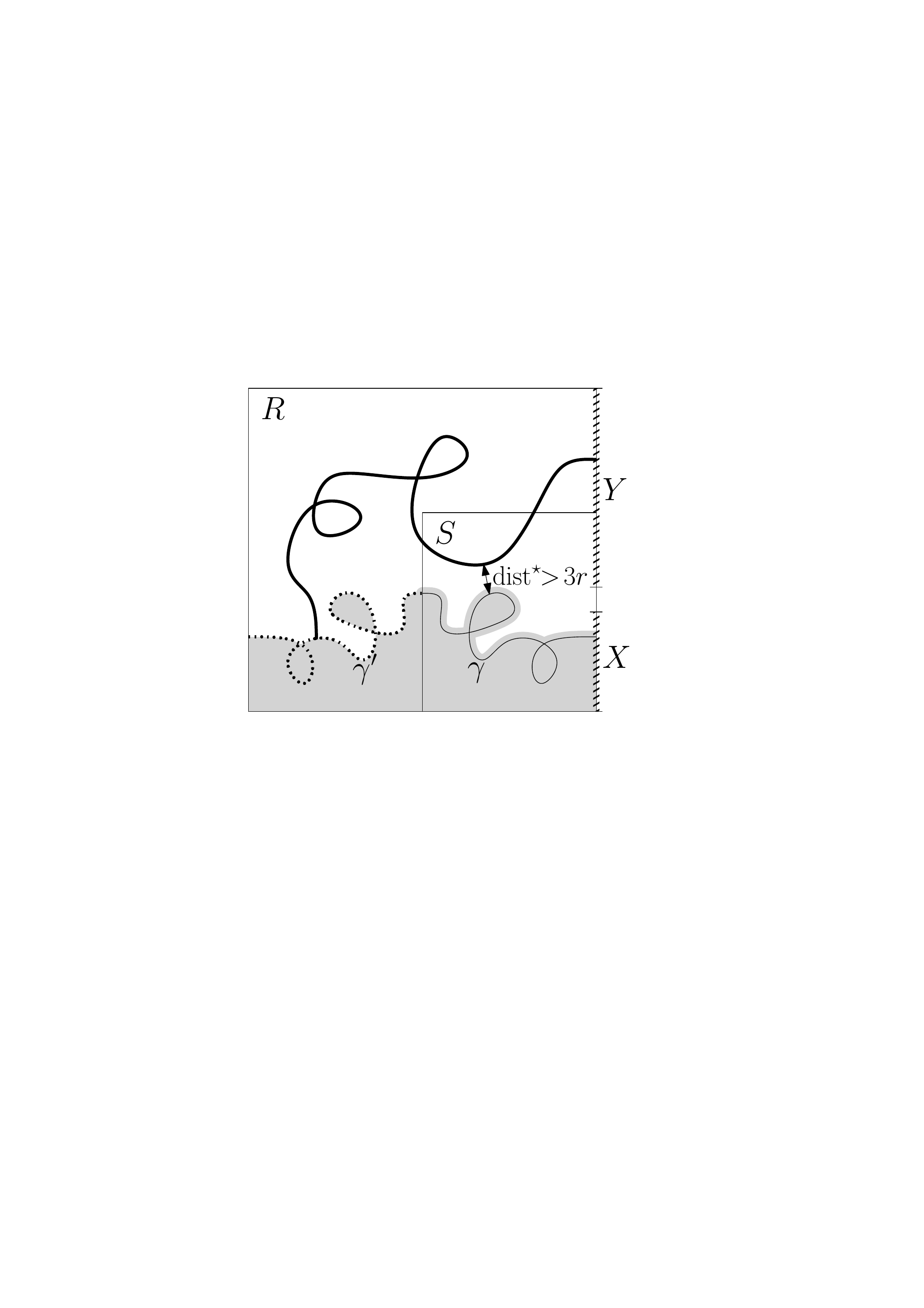}
  \caption{A diagrammatic representation of the event $\mathcal C_\gamma$ that there
    is an open path from $Y$ to $\ol{\gamma'}$ inside the region $K(\gamma)$ (which
    corresponds to the complement in $R$ of the grey region).}
\label{fig:2}
\end{figure}
\begin{lemma}\label{lem:RSW3}
  There exists  $c_7>0$ such that for every $n\ge6r$,
  \begin{equation}
    \label{eq:35}
   \max_\gamma (\P{\mathcal C_{\gamma}})\ge c_4/3\implies f(14n,13n)\ge  c_7.
  \end{equation}
  where the maximum is taken over all deterministic paths $\gamma$ from $X$ to
  $\mathsf L(S)$ in $S$, such that $\gamma\cap Y=\emptyset$.
\end{lemma}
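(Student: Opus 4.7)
\noindent\textit{Proof plan for Lemma~\ref{lem:RSW3}.}
Let $\gamma_\star$ be a deterministic path achieving the maximum, so that $\mathbf P_p[\mathcal C_{\gamma_\star}]\ge c_4/3$. My strategy is to combine $\mathcal C_{\gamma_\star}$ with its image under the reflection through the plane $\{x=0\}$ (a measure-preserving symmetry of $\mathbb S_k$) and then invoke a gluing lemma to produce a genuine left-right crossing of $R$.

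Set $Y':=\overline{\{-7n\}\times[5n,13n]}\subset\mathsf L(R)$, the reflection of $Y$, and let $\tilde K(\gamma_\star):=(K_0(\gamma_\star)\cup\partial K_0(\gamma_\star))\setminus \mathcal N(\overline{\gamma_\star'},3r)$, which is the reflection of $K(\gamma_\star)$ (note that $K_0(\gamma_\star)$ is itself symmetric, only the removed neighborhood changes). By reflection-invariance of $\mathbf P_p$, the event
\[
\tilde{\mathcal C}_{\gamma_\star}\;:=\;\big\{Y'\lr[\tilde K(\gamma_\star)]\overline{\gamma_\star}\big\}
\]
has the same probability as $\mathcal C_{\gamma_\star}$, hence $\mathbf P_p[\tilde{\mathcal C}_{\gamma_\star}]\ge c_4/3$. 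Both events are increasing, so Harris--FKG gives
\[
\mathbf P_p[\mathcal C_{\gamma_\star}\cap \tilde{\mathcal C}_{\gamma_\star}]\ge (c_4/3)^2.
\]
On this joint event we obtain an open path $\sigma\subset K(\gamma_\star)$ from $Y\subset\mathsf R(R)$ to a point of $\overline{\gamma_\star'}$ (hence terminal $x$-coordinate $\le0$), and an open path $\tilde\sigma\subset \tilde K(\gamma_\star)$ from $Y'\subset\mathsf L(R)$ to a point of $\overline{\gamma_\star}$ (terminal $x$-coordinate $\ge0$). Both paths live in the common upper region $K_0(\gamma_\star)$ of $R\setminus\overline{\gamma_\star\cup\gamma_\star'}$; in the $\mathbb Z^2$-projection, $\pi(\sigma)$ joins $\{x=7n\}$ to $\{x\le 0\}$ inside the simply connected planar region $\pi(K_0(\gamma_\star))$, while $\pi(\tilde\sigma)$ joins $\{x=-7n\}$ to $\{x\ge 0\}$ in the same region, so by planar topology the two projected curves are forced to cross.

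This forced planar intersection is exactly the must-cross hypothesis required by the gluing lemma of Theorem~\ref{GL0}. Although $K_0(\gamma_\star)$ is not a rectangle, it is a rectilinear domain, and the generalization noted in Remark~2 after Theorem~\ref{GL} applies. After a routine geometric trimming, replacing $Y,\,Y',\,\overline{\gamma_\star},\,\overline{\gamma_\star'}$ by appropriate sub-sets whose $\mathbb Z^2$-projections are pairwise disjoint and pairwise at sup-norm distance at least $r+2$, and working inside a carefully chosen rectilinear sub-region of $K_0(\gamma_\star)$ (which is possible precisely because of the assumption $n\ge6r$), the extended gluing lemma yields
\[
\mathbf P_p\bigl[Y\lr[R] Y'\bigr]\;\ge\;\mathsf h_0\!\bigl(\mathbf P_p[\mathcal C_{\gamma_\star}]\wedge \mathbf P_p[\tilde{\mathcal C}_{\gamma_\star}]\bigr)\;\ge\;\mathsf h_0(c_4/3).
\]
Any such open connection is a horizontal crossing of $R$, which is a translate of $[0,14n]\times[0,13n]$; therefore $f(14n,13n)\ge c_7:=\mathsf h_0(c_4/3)>0$, as required.

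The principal obstacle is the last step: one must delineate a rectilinear sub-region of $K_0(\gamma_\star)$ so that its boundary carries (truncations of) the four sets $Y,Y',\overline{\gamma_\star},\overline{\gamma_\star'}$ with the required disjointness and separation of projections, \emph{and} so that the must-cross condition is preserved \emph{and} the paths $\sigma,\tilde\sigma$ still realize the advertised connections inside this sub-region with uniformly positive probability. The geometric room to do this comes exactly from the hypothesis $n\ge6r$; once the trimming is arranged the conclusion follows directly from the gluing machinery developed in Section~\ref{sec:gluing-lemma}.
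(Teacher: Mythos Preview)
Your approach is essentially the paper's: use the reflection through $\{x=0\}$ to produce a companion path landing on $\overline{\gamma_\star}$, observe that the two projected paths must cross inside the simply connected region above $\overline{\gamma_\star\cup\gamma_\star'}$, and invoke the generalized Theorem~\ref{GL0} to obtain $Y\lr[\,\cdot\,]Y'$, hence $f(14n,13n)\ge \mathsf h_0(c_4/3)$.

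Two remarks. First, the Harris--FKG step is superfluous: Theorem~\ref{GL0} already takes the \emph{minimum} of the two crossing probabilities as input, so the bound $(c_4/3)^2$ never enters and you should simply feed $\Pp{\mathcal C_{\gamma_\star}}$ and $\Pp{\tilde{\mathcal C}_{\gamma_\star}}$ directly into $\mathsf h_0$. Second, the step you flag as the ``principal obstacle'' is exactly what the paper carries out explicitly rather than leaving as ``routine trimming'': it sets
\[
K_\Box\;=\;\bigcup_{\substack{z\in 2r\Z^2\\ \ol{B_r(z)}\subset K(\gamma)}}\ol{B_r(z)},
\]
a rectilinear domain whose boundary is a union of segments of length $2r$ at mutual sup-norm distance at least $2r$, so that Remark~\ref{item:6} applies verbatim. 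The four boundary sets are then $Y$, $Y'$, and the left/right portions $A,A'$ of $\partial K_\Box$ below the top of $R$; the event $\mathcal C_\gamma$ automatically forces $A\lr[K_\Box]Y$, and symmetry gives the companion bound. This concrete construction is what converts your sketch into a proof; without it, the applicability of the gluing lemma to the irregular region $K_0(\gamma_\star)$ is not justified.
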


\begin{proof}
  Take $\gamma$ such that $\P{\mathcal C_{\gamma}} \ge c_4/3$. Let $Y'$ denote
  the symmetric reflection of $Y$ through the plane $\{0\}\times\R^2$. Assume
  for simplicity that $n$ is a multiple of $2r$.  Define 
  \[K_\Box\dot{=}\bigcup_{z\in 2r\Z^2\text{ s.t. } \ol{B_{r}(z)}\subset
    K(\gamma)} \ol{B_r(z)}.\] We consider the left- and right-bottom parts of
  the boundary of $K_\Box$, defined respectively by $A=\partial K_\Box\cap
  \ol{(-7n,0)\times[0,13n)}$ and $A'=\partial K_\Box\cap
  \ol{(0,7n)\times[0,13n)}$. Observe that
     \begin{equation}
       \label{eq:77}
       \Pp{A\lr[K_\Box]Y}\ge \P{\mathcal C_{\gamma}} \ge c_4/3.
     \end{equation}
The domain $K_\Box$ is regular enough to apply Theorem~\ref{GL0} gluing Lemma
(see Remark \ref{item:6} after Theorem~\ref{GL}). We obtain 
 $$\Pp{Y\lr[K_\Box] Y'}\ge\mathsf{h}_0 (\Pp{A\lr[K_\Box]Y}\wedge\Pp{A'\lr[K_\Box]Y'}),$$
for some $\mathsf{h}_0 \in \mathcal{H}$. 
This gives 
\begin{equation}
  \label{eq:78}
  f(14n,13n)\ge c_7,
\end{equation}
with $c_7 = \mathsf{h}_0 (c_4/3)$, which concludes the proof.

\end{proof}

\begin{proof}[Proof of Theorem~\ref{thm:RSW}]
  Assume that Eq.~\eqref{eq:38} holds. By Lemma~\ref{lem:RSW1} we have
  $\Pp{\mathcal A}\ge c_4$.

  Let $\Gamma=\Gamma^S_{\mathrm{min}}(X,\mathsf L(S))$ be the minimal open path
  from $X$ to $\mathsf L(S)$ inside $S$. (Recall that $\mathcal{A}$ does not occur if
  there is no such path.) Let $\mathcal B_1$ be the event that there exists an
  open path from $Y$ to $X$ inside $R$ and $\mathcal B_2$ be the event that
  there exists an open path from $Y$ to $\mathcal N(\ol \Gamma,3r)$ inside $R$ (see Fig.~\ref{fig:3}).
  \begin{figure}[htbp]
  \hfill
  \begin{minipage}{.43\linewidth}
    \centering
    \includegraphics[width=\textwidth]{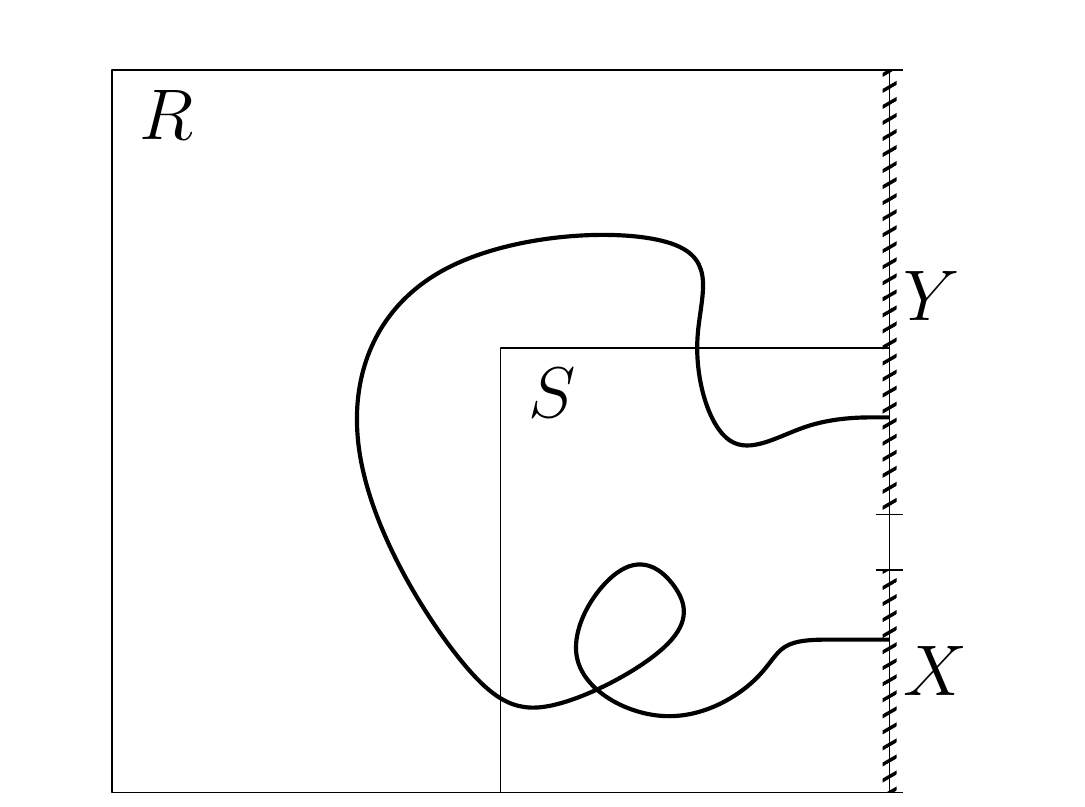}
  \end{minipage}
  \hfill
  \begin{minipage}{.43\linewidth}
    \centering
    \includegraphics[width=\textwidth]{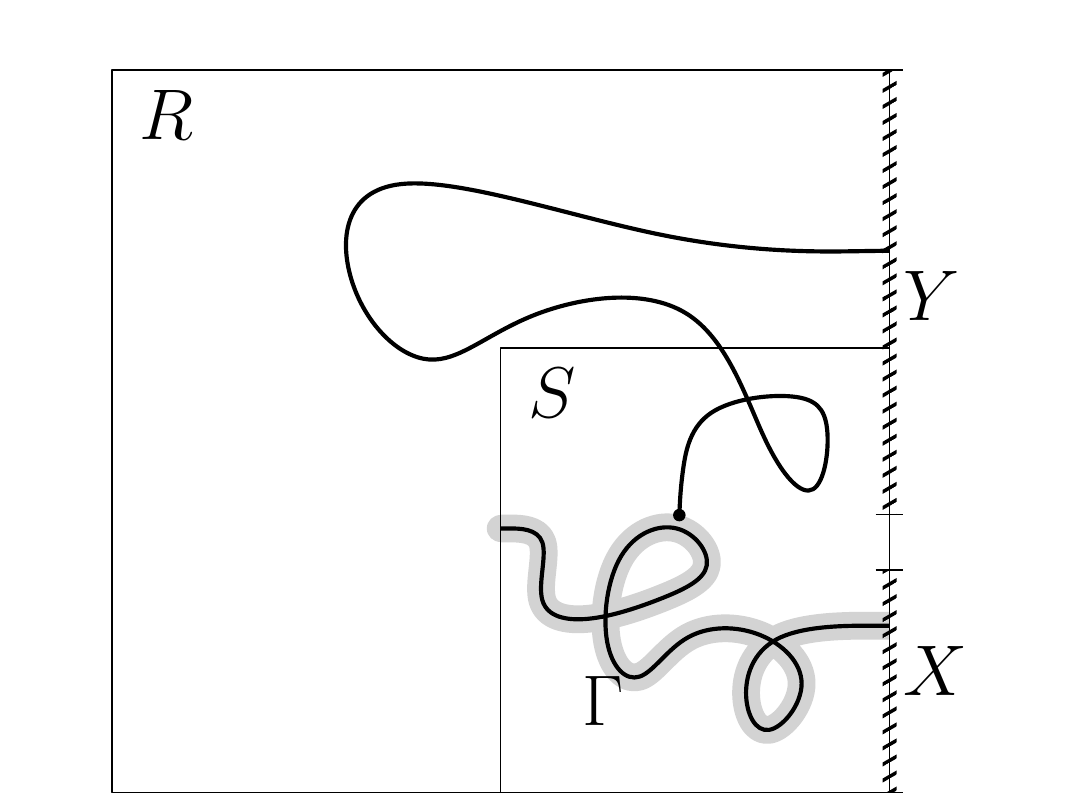}
  \end{minipage}
  \hfill
   \caption{Diagrammatic representations of the event $\mathcal
     B_1$ (on the left) and the event $\mathcal B_2$ (on the right).}
    \label{fig:3}
\end{figure}

 By the union bound, we have
    \begin{equation}
      \label{eq:1}
      c_4\le \Pp{\cal B_1}+ \Pp{\cal B_2} +\Pp{\cal A\cap \cal B_1^c \cap\cal B_2^c}.
    \end{equation}
    At least one of the three terms on the right hand side must be larger than
    $c_4/3$, and we distinguish  between these three cases. The argument for the third case will use Lemma \ref{lem:RSW3}.

 \textbf{Case 1:}  $\Pp{\cal B_1}\ge c_4/3$.

      Let $Z_i=\ol{\{7n\}\times[in,(i+1)n)}$ for $i\in \mathbb N$. Since
      $X=Z_0\cup\cdots\cup Z_3$ and $Y=Z_5\cup\cdots\cup Z_{12}$, we have

      \begin{align}
        \label{eq:51}
        c_4/3\le \Pp{\cal B_1}\le \sum_{\substack{0\le i\le3\\5\le j\le 12}}\Pp{Z_i\lr[R]Z_j}.
      \end{align}
      Therefore there exists some $i,j$ with $0\le i\le 3$ and $5\le j\le 12$ such that $\Pp{
        Z_i\lr[R]Z_j}\ge c_4/96$. We assume that
      \begin{equation}
        \label{eq:52}
        \Pp{Z_3\lr[R]Z_5}\ge c_4/96,
      \end{equation}
      since the other cases can be treated similarly. Let $R_1=[-7n,7n]\times[0,40n]$.
      By the Theorem \ref{GL0} gluing lemma and translation invariance, we have for every $6\le i \le 32$,
      \begin{align}
        \label{eq:53}
        \Pp{Z_3\lr[R_1]Z_{i}} &\ge \mathsf
        h_0(\Pp{Z_3\lr[R_1]Z_{i-1}}\wedge\Pp{Z_{i-2}\lr[R_1]Z_{i}})\\ &\ge \mathsf
        h_0(\Pp{Z_3\lr[R_1]Z_{i-1}}\wedge\Pp{Z_{3}\lr[R]Z_{5}})\\
        &\ge \mathsf
        h_0(\Pp{Z_3\lr[R_1]Z_{i-1}}\wedge (c_4/96)).
      \end{align}
      By induction, this implies that $Z_3$ is connected to $Z_{32}$ inside
      $R_1$ with probability larger than $c_5:=\mathsf h_0^{27}(c_4/96)$. When this
      holds, the rectangle $[-7n,7n]\times[4n,32n]$ is crossed from top to
      bottom by an open path. Therefore,
      \begin{equation}
        \label{eq:54}
       f(28n,14n)\ge c_5.
      \end{equation}

      %

    \textbf{Case 2:} $\Pp{\cal B_2}\ge c_4/3$.

    Apply the Theorem~\ref{GL} gluing lemma, we have
   \begin{equation}
     \label{eq:2}
     \Pp{\mathcal B_1}\ge  \mathsf h_0(\Pp{Y\lr[R] \mathcal
     N(\ol \Gamma,3r)}) =\mathsf h_0(\Pp{\cal B_2}) \ge \mathsf h_0(c_4/3).
   \end{equation}
   Then, as in the first case, there exists a constant $c_6>0$ such
   that
\begin{equation}
        \label{eq:33}
        {f(28n,14n)}\ge c_6.
      \end{equation}

\textbf{Case 3:} $\Pp{\cal A\cap \cal B_1^c \cap\cal B_2^c}\ge c_4/3$.

   When $\cal A\cap \cal B_1^c \cap\cal B_2^c$ holds the open path from
  $\mathsf B(R)$ to $Y$ must be at distance at least $3r+1$ from the minimal
  path $\Gamma=\Gamma_{\min}^S(X,\mathsf L(S))$. Define $\mathscr C$ to be the set
  of vertices that are either connected to $X$ inside $R$ or connected to a
  point $z$ whose distance from $\bar \Gamma$ satisfies $\mathrm{dist^{*}}(z,\bar
  \Gamma)\le 3r$. Alternatively, the set $\mathscr C$ can be defined by the
  following two-step exploration. First, explore all the open clusters touching $X$
  and notice that the minimal path $\Gamma$ is already determined after this first
  exploration. In a second step, explore the open clusters of all the vertices in the
  $3r$-neighborhood of $\bar\Gamma$ (that have not been explored yet). We make two
  observations:
  \begin{enumerate}[(a)]
  \item If the event $\cal A\cap \cal B_1^c \cap\cal B_2^c$ occurs, then there
    exists an open path from $Y$ to $\mathsf B(R)$ and the random set $\mathscr
    C$ does not intersect $Y$ (otherwise $\mathcal B_1$ or $\mathcal B_2$
    occurs). Therefore, the open path from $Y$ to $\mathsf B(R)$ must lie in
    $R\setminus\mathscr C$. Therefore, the event $Y \lr[R\setminus \mathscr C]
      \mathsf B(R)$ occurs.
  \item If $C$ is an admissible value for $\mathscr C$, the event $\mathscr C=C$
    is measurable with respect to the status of the edges adjacent to the set
    $C$. In particular, the status of the edges in $R\setminus  C$ is independent of
the event $\mathscr C=C$.
  \end{enumerate}

  Summing over the admissible realizations for the pair $(\mathscr C,\Gamma)$
  which allow the event $\cal A\cap \cal B_1^c \cap\cal B_2^c$ to occur, we obtain
  \begin{align}
    \label{eq:40}
    c_4/3\le\Pp{\mathcal A\cap\mathcal B_1^c\cap\mathcal B_2^c}&\overset{\hphantom{(\text{a})}}=\sum_{(C,\gamma)} \Pp{\mathcal
      A\cap\mathcal B_1^c\cap\mathcal B_2^c,\mathscr C=C, \Gamma=\gamma}\\
    &\overset{(\text{a})}\le \sum_{(C,\gamma)} \Pp{ Y \lr[R\setminus C]
      \mathsf B(R),\mathscr C=C, \Gamma=\gamma}\\
    &\overset{(\text{b})}=\sum_{(C,\gamma)} \Pp{ Y \lr[R\setminus C]
      \mathsf B(R)} \Pp{\mathscr C=C, \Gamma=\gamma} \\
    &\overset{\hphantom{(\text{a})}}\le \sum_{(C,\gamma)} \Pp{\mathcal C_\gamma}
    \Pp{\mathscr C=C, \Gamma=\gamma}\\
    &\overset{\hphantom{(\text{a})}}\le \max_\gamma\Pp{\mathcal C_\gamma}. \label{eq:5}
  \end{align}
The definition of $\mathcal C_\gamma$ and the next to last inequality here are explained in the
discussion before Lemma \ref{lem:RSW3} (see Fig.~\ref{fig:2}.)
  Equation~\eqref{eq:5} together with  Lemma~\ref{lem:RSW3} imply
  \begin{equation}
    \label{eq:6}
    f(14n,13n)\ge c_7.
  \end{equation}
  Then, by Item 1 of Proposition~\ref{prop:standard-inequality} we obtain that $f(28n,13n)\ge c_8:=\mathsf h_2^{14}(c_7)$, which implies that
  \begin{equation}
    \label{eq:49}
    f(28n,14n)\ge c_8.
  \end{equation}
Combining the three cases above, we have 
\begin{equation}
   \inf_{n \ge 1}{f(28n,14n)}\ge c_9,
  \end{equation}
with $c_9= \min\{c_5, c_6, c_8\}$, which (by another use of Proposition \ref{prop:standard-inequality}) yields the conclusion of Theorem~\ref{thm:RSW}.
\end{proof}

\subsection{RSW-Theorem: high-probability version}
\label{sec:rsw-theorem:-high}

\begin{theorem}\label{thm:RSWhp}
  Fix $\eps>0$ and $k\ge1$. For $p \in [\eps, 1-\eps]$, if $\displaystyle\sup_{n\ge1}{f(n,2n)}=1$, then
  $\displaystyle\sup_{n\ge1}{f(2n,n)}=1$.
\end{theorem}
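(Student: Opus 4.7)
The plan is to adapt the proof of Theorem~\ref{thm:RSW} to the high-probability regime, tracking how probabilities close to $1$ propagate through the construction. The key structural observation is that every function $\mathsf h \in \mathcal H$ appearing in the gluing lemmas (Theorems~\ref{GL0}, \ref{GL}) and in Proposition~\ref{prop:standard-inequality} can be chosen so that $\mathsf h(1)=1$ and is continuous at $1$. Indeed, the proof of Theorem~\ref{GL} constructs $\mathsf h_0 = \max\{h_1,\,c_0\cdot\mathrm{id}\}$ with $h_1(1)=1$, and $\mathsf h_2 = \mathsf h_0\circ\mathsf h$ inherits the same property. Since only a bounded (independent of $n$) number of such compositions arise in the argument, inputs close to $1$ will yield outputs close to $1$.

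Given $\eta>0$, fix $\delta=\delta(\eta)>0$ sufficiently small, and by the hypothesis $\sup_n f(n,2n)=1$ choose $n$ with $f(n,2n)\ge 1-\delta$. Item~2 of Proposition~\ref{prop:standard-inequality} upgrades this to $f(7n,8n)\ge 1-\eta_1(\delta)$ with $\eta_1\to 0$, and similarly for the other aspect ratios $\le 2$ used in Section~\ref{sec:RSWpf}. Using the Harris--FKG inequality and the square-root trick (Corollary~\ref{cor:SRT}) on suitable increasing subevents, one then shows that the composite event $\mathcal A$ of Lemma~\ref{lem:RSW1} satisfies $\Pp{\mathcal A}\ge 1-\eta_A(\delta)$, again with $\eta_A\to 0$ as $\delta\to 0$. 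The square-root trick is used here in place of the union bounds appearing in the positive-probability proof, since for increasing events it transforms probabilities close to $1$ into slightly smaller probabilities close to~$1$, whereas the union bound introduces a factor of $1/j$ that destroys the high-probability content.

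The remaining task is to convert $\Pp{\mathcal A}$ close to $1$ into a hard-direction crossing of probability close to $1$. I would bypass the three-case split of Theorem~\ref{thm:RSW} (which only delivers a constant lower bound, even when $\Pp{\mathcal A}$ is close to $1$, because of the $c_4/3$ dichotomy followed by further splittings) and apply Theorem~\ref{GL} directly: since the minimal path $\Gamma=\Gamma_{\min}^S(X,\mathsf L(S))$ exists with probability close to $1$, and since the easy-direction crossings near $Y$ (also with probability close to $1$) force a connection from $Y$ to the $r$-neighborhood of $\bar\Gamma$ inside $R$, Theorem~\ref{GL} delivers $\Pp{Y\lr[R] X}\ge 1-\eta_3(\delta)$. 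Iteration of the gluing lemma along segments of $\mathsf R(R)$, as in Case~1 of Theorem~\ref{thm:RSW}, then yields $f(28n,14n)\ge 1-\eta(\delta)$, and Item~1 of Proposition~\ref{prop:standard-inequality} converts this to $f(2m,m)\ge 1-\eta'(\delta)$ for some $m$. Since $\delta$ can be made arbitrarily small, $\sup_m f(2m,m)=1$.

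The main obstacle is the direct application of Theorem~\ref{GL}: one has to verify that $\Pp{Y\lr[R]\mathcal N(\bar\Gamma,3r)}$ is close to $1$ whenever $\Pp{\mathcal A}$ is. The difficulty is that the $\mathsf B(R)$-to-$Y$ path guaranteed by $\mathcal A$ need not approach $\bar\Gamma$. To force it to, one can augment $\mathcal A$ with further easy crossings located in a small neighborhood of the endpoint of $\Gamma$, each occurring with probability close to $1$ by the first step, and then glue them using Theorems~\ref{GL0}/\ref{GL}. Each such auxiliary gluing incurs a composition with some $\mathsf h\in\mathcal H$ that is continuous at $1$; because only finitely many such compositions are needed, the final bound remains close to $1$, and the chain of implications $f(n,2n)\ge 1-\delta\implies f(28n,14n)\ge 1-\eta(\delta)$ carries through.
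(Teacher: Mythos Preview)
Your proposal correctly identifies the central difficulty---that re-running the proof of Theorem~\ref{thm:RSW} in the high-probability regime fails at the three-case split---but the proposed workaround has a real gap. You want to show directly that $\Pp{Y\lr[R]\mathcal N(\bar\Gamma,3r)}$ is close to $1$, by ``augmenting $\mathcal A$ with further easy crossings located in a small neighborhood of the endpoint of $\Gamma$''. This does not work: the endpoint of $\Gamma$ lies on $X=\ol{\{7n\}\times[0,4n]}$ at a random height, while $Y=\ol{\{7n\}\times[5n,13n]}$ sits on the same vertical side of $R$ across a gap of width $n$. No collection of easy-direction crossings placed near $X$ will force a connection from $Y$ across that gap to $\bar\Gamma$. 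More fundamentally, the $\mathsf B(R)$-to-$Y$ path supplied by $\mathcal A$ can avoid $\bar\Gamma$ entirely by crossing $\ol{\gamma'}$ instead---this is precisely what drives Case~3 in Theorem~\ref{thm:RSW}---and nothing in your construction rules this out with probability close to $1$. Theorem~\ref{GL} does not help here, since it requires as \emph{input} a high probability for the connection to $\mathcal N(\bar\Gamma,3r)$, which is exactly what you are trying to establish.

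The paper's route is quite different and sidesteps this obstacle. It first invokes Theorem~\ref{thm:RSW} as a black box: since $\sup_n f(n,2n)=1$ implies (via Lemma~\ref{lem:FC2}) that $\inf_n f(n,2n)>0$, Theorem~\ref{thm:RSW} gives $\inf_n f(2n,n)>0$, and then Theorem~\ref{cor:GL} produces open circuits in $\ol{A_{n,2n}}$ with probability $\ge c_0>0$ uniformly in $n$. The key new idea is to boost this to high probability by independence across disjoint dyadic annuli: for a large enough constant $c_1$, an open circuit in $\ol{A_{n,c_1n}(z)}$ exists with probability $>1-\delta$. Separately, from $\sup_n f(3c_1n,4c_1n)=1$ and the square-root trick, one locates a short segment on the boundary of a long rectangle connected to the far side with probability $>1-\delta$. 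The high-probability circuit surrounding this segment must cross the boundary on both sides of it; gluing the long crossing to (a cut-open version of) the circuit via the Theorem~\ref{GL0} lemma---whose $\mathsf h_0$ is continuous at $1$---yields a longer crossing with probability $\ge\mathsf h_0(1-\delta)$, and one more reflection-plus-gluing step finishes. The essential ingredient you are missing is the circuit, boosted to high probability via independence of disjoint annuli; this is what makes the high-probability gluing go through without re-entering the three-case RSW analysis.
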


\begin{proof}
  Assume that $\sup_{n\ge 1}f(n,2n) =1$; then by
  Proposition~\ref{prop:standard-inequality},
  \begin{equation}
    \label{eq:8}
    \sup_{n\ge1} f(3n,4n)=1.
  \end{equation}
  By Lemma~\ref{lem:FC2}, we also have that
  \begin{equation}
    \label{eq:15}
    \inf_{n\ge1}f(n,2n)>0;
  \end{equation}
  otherwise, Lemma~\ref{lem:FC2} would imply exponential decay of the one-arm event,
  which would contradict~\eqref{eq:8}. By Theorem~\ref{thm:RSW}, Eq.~\eqref{eq:15}
  implies
  \begin{equation}
    \inf_{n\ge 1}f(2n,n)>0.\label{eq:16}
\end{equation}
Using Theorem \ref{cor:GL} (and Item 1 of Proposition \ref{prop:standard-inequality}), we can fix a constant $c_0>0$ such
that, for every $n\ge1$,
\begin{equation}
  \mathbf P_p [ \text{there exists an open circuit in
    $\ol{A_{n,2n}}$ surrounding $\ol{B_n}$} ]\ge c_0.\label{eq:56}
\end{equation}
Fix $\delta>0$. By Equation~\eqref{eq:56} and independence, there exists a
constant $c_1<\infty$ large enough such that for every $n\ge 1$ and every $z\in
\mathbb Z^2$,
\begin{equation}
  \label{eq:17}
  \Pp{\text{there exists an open circuit in
    $\ol{A_{n,c_1 n}(z)}$ surrounding $\ol{B_n(z)}$}}>1-\delta,
\end{equation}

Let $R=\ol{[1,1+3c_1n]\times[-2c_1n,2c_1n]}$. By symmetry and the square root trick, there
exists $y\in \{0,n\ldots,(2c_1-1)n\}$ such that $\overline{\{1\}\times[y,y+n]}$ is
connected in $R$ to the right side $\mathsf R(R)$ with probability larger than
\begin{equation}
  \label{eq:20}
  1- (1-f(3c_1n,4c_1n))^{1/4c_1}.
\end{equation}
Therefore, by Equation~\eqref{eq:8}, we can find an $n$ such that
\begin{equation}
 \Pp{\overline{\{1\}\times[y,y+n]} \lr[R]\mathsf R(R)}\ge 1-\delta. \label{eq:14}
\end{equation}
Consider the set $S= \overline{A_{n,c_1n}(z)\setminus (\{0\}\times[y,\infty))}$
with $z=(0,y+n/2)$. Define also the sets $A=\ol{\{1\} \times [y+n/2+n, y+n/2+c_1n]}$ and $B=
\ol{\{-1\} \times [y+n/2+n, y+n/2+c_1n]}$. The key feature of these subsets (see
Fig.~\ref{fig:6}) is that the existence of an open circuit in
$\ol{A_{n,c_1n}(z)}$ with $z=(0,y+n/2)$ surrounding $\ol{B_n(z)}$ implies that
there is an open path from $A$ to $B$ inside $S$.
\begin{figure}[htbp]
  \centering
  \hfill
  \begin{minipage}{.49\linewidth}
    \includegraphics[width=\linewidth]{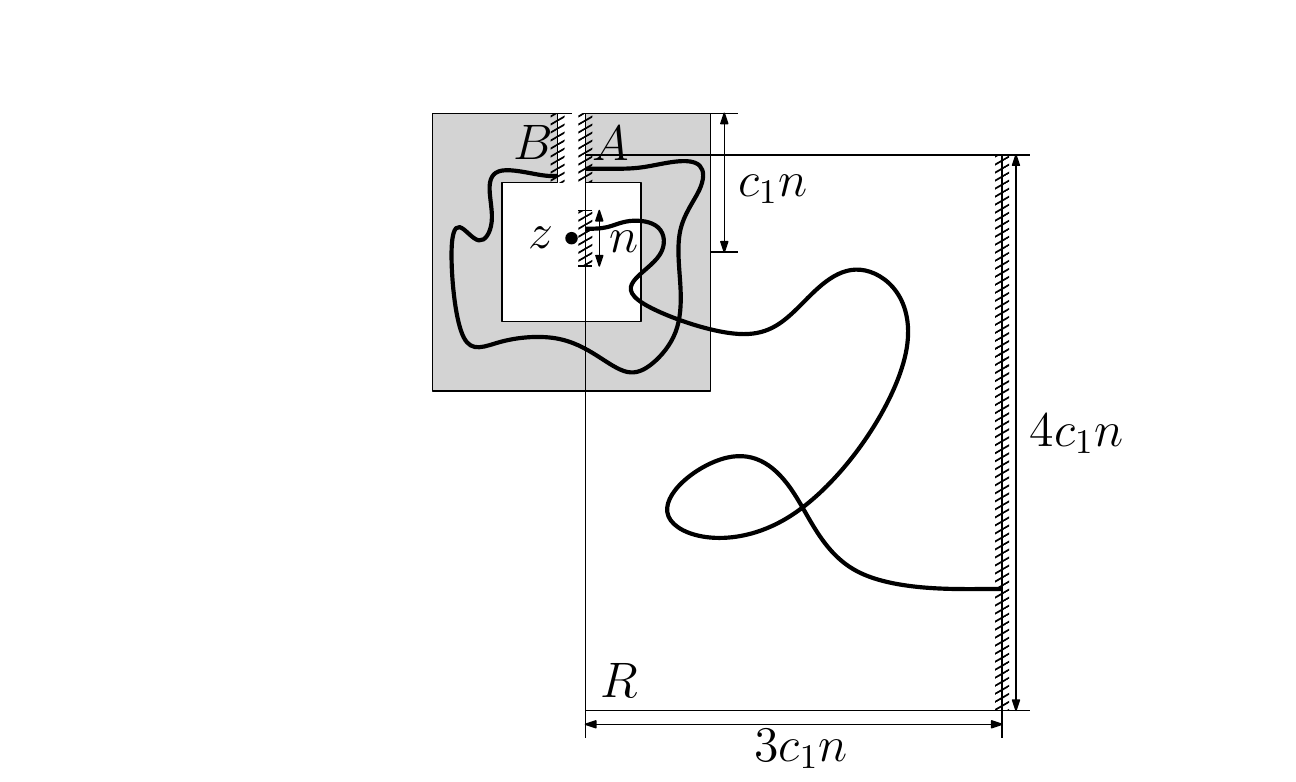}
  \end{minipage}
  \hfill
   \begin{minipage}{.49\linewidth}
    \includegraphics[width=\linewidth]{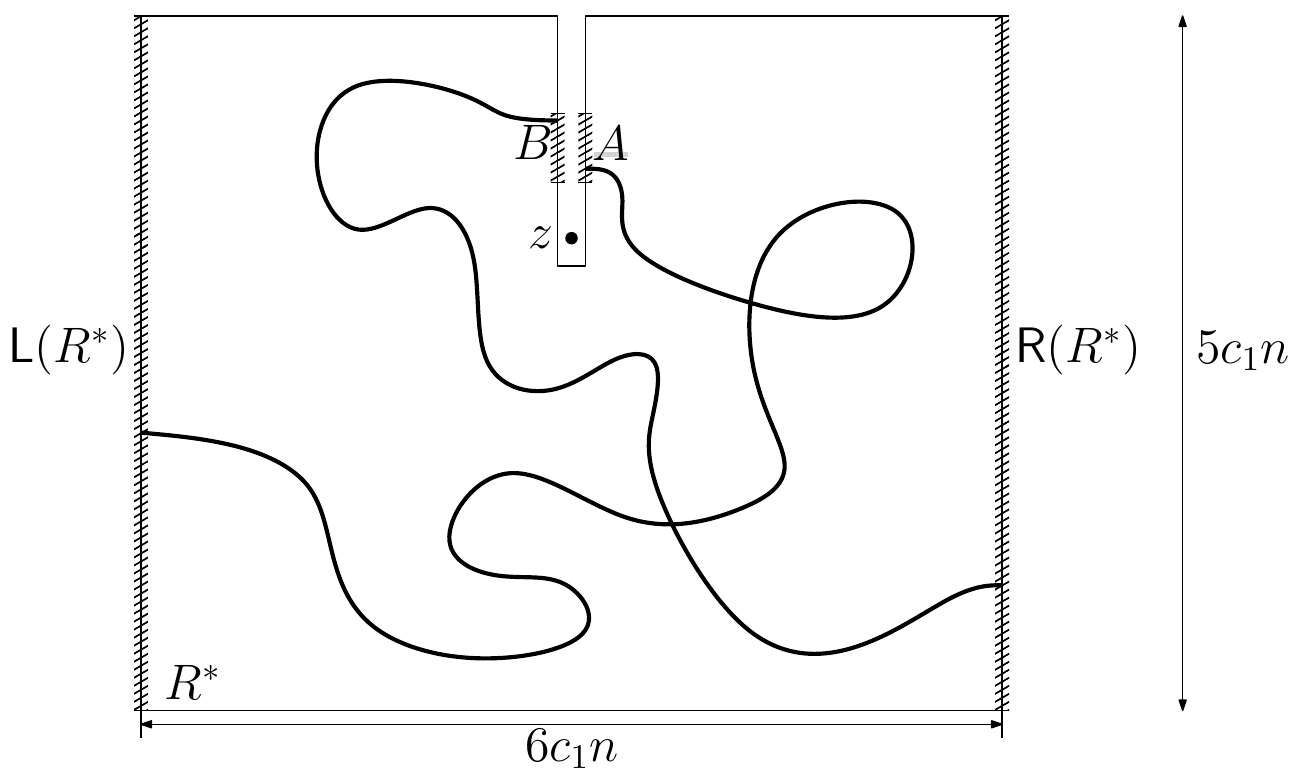}
  \end{minipage}

  \caption{Illustration of the geometric construction used to create an open
    crossing inside $R^*$. The grey region corresponds to the set $S$. First, we
    use the two open paths illustrated on the left picture to create an open
    path from $B$ to the right side of $R$. Then we use the two open paths
    illustrated on the right picture to create an open path from left to right
    in $R^*$. }
  \label{fig:6}
\end{figure}

\noindent Set $\Gamma=\Gamma_{\rm min}^S(A,B)$. Using Equations~\eqref{eq:17}
and \eqref{eq:14}, and an adaptation of the Theorem \ref{GL0} gluing lemma (where
$S$ is replaced  by $R^* = \ol{[-3c_1n,3c_1n]\times
[-2c_1n,3 c_1n] \setminus (\{0\}\times[y,\infty))}$ here), one has
\begin{equation}
  \label{eq:21}
  \Pp{B\lr[R^{*}] \mathsf R(R)}\ge \mathsf h_0\big(1-\delta\big).
\end{equation}
Let $R'$ be the symmetric reflection of $R$ through the plane $\{0\}\times \R^2$.
We have, using the Theorem \ref{GL0} gluing lemma again, that
\begin{align}
  \label{eq:22}
  f(6c_1n,5c_1n)&\ge \mathsf h_0(\Pp{B\lr[R^{*}] \mathsf R(R)} \wedge \Pp{A\lr[R^{*}] \mathsf L(R)})\\
  &\ge \mathsf h_0^{2}(1-\delta ).
\end{align}
Since $\delta>0$ was arbitrary, this completes the proof.

\end{proof}

\subsection{Proof of Theorem~\protect\ref{thm:BXP}}

\label{sec:proof-theorem1}

By Lemma~\ref{lem:renormalization-inputs} and
Theorem~\ref{thm:RSW}, we  have
\begin{equation}
  \label{eq:59}
  \inf_{n\ge1}f(2n,n)>0.
\end{equation}
By Lemma~\ref{lem:renormalization-inputs}, Item 2 of Proposition~\ref{prop:standard-inequality} and
Theorem~\ref{thm:RSWhp}, we  have
\begin{equation}
\label{eq:60}
  \sup_{n\ge1}f(n,2n)<1.
\end{equation}
Eqs.~\eqref{eq:59} and \eqref{eq:60} together with Items~\ref{item:1} and
\ref{item:2} of Proposition~\ref{prop:standard-inequality} conclude the proof.
\subsection{Proof of Corollary~\protect~\ref{cor}}
\label{sec:proof-coroll-prot}
The proofs of these items are standard. The first item follows from the RSW theorem and Theorem \ref{cor:GL}. For the
second item, note that crossing the aspect ratio $2$ annulus requires crossing
an aspect ratio $4$ rectangle, and then using the square root trick and Theorem \ref{thm:BXP}
completes the proof.
For the third item, write $\bar{B}_{n}\backslash \bar{B}_{m}$ as the
disjoint union of $\log _{2}\frac{n}{m}$ annuli, each with aspect ratio $2$, and
then the one arm probability is bounded by the probability of all
successes in $\log _{2}\frac{n}{m}$ i.i.d.\@ trials.

\section{Proof of Theorem \protect\ref{MSF}\label{sec:pf}}

Denote $C_{a,b}$ (and $D_{a,b}$) the event that there exists a $p_{c}$-open
circuit (and $p_{c}$-closed dual surface, respectively) in $\bar{B}%
_{b}\backslash \bar{B}_{a}$ that surrounds the origin. By using Corollary \ref{cor}%
, it is easy to see that we can choose two alternating sequences $\left\{ n_{i}\right\} ,\left\{
m_{i}\right\} $, such that for each $i$, $2n_{i}<m_{i}<n_{i+1}$, and
\begin{eqnarray}
\mathbf{P}_{p_{c}}\left[ C_{n_{i},2n_{i}}\right]  &\geq &c_{0},   \\
\mathbf{P}_{p_{c}}\left[ D_{2n_{i},m_{i}}\right]  &\geq &1-c_{0}/2.
\end{eqnarray}

We use the total order on circuits defined before the proof of Theorem \ref{cor:GL2}. Given $\omega \in C_{n_{i},2n_{i}}$, we define $\Gamma _{\min }^{\left(
i\right) }\left( \omega \right) $ to be the minimal $p_{c}$-open circuit in $%
\bar{B}_{2n_{i}}\backslash \bar{B}_{n_{i}}$ that surrounds the origin. We
will omit the superscript $i$ when it is clear from the context.

For $x\in \mathbb{S}_{k}$ and $n\in \mathbb{N}$ (with $x$ in $\bar{B}_{n}$), we denote by $\mathcal{I}_{x}^{n}$
the invasion cluster starting at $x$, and stopped when it first reaches
any vertex in $\partial \bar{B}_{n}$. Let $\mathcal{B}_{x}^{m_{i}}=\left\{
\omega :\Gamma _{\min }^{\left( i\right) }\left( \omega \right) \subset
\mathcal{I}_{x}^{m_{i}}\left( \omega \right) \right\} $. For any Borel
measurable set $A\subset \left[ 0,1\right] ^{\bar{B}_{m_{i-1}}}$, denote $%
\mathcal{Y}_{A}^{i}=C_{n_{i},2n_{i}}\cap D_{2n_{i},m_{i}}\cap A$. The
following lemma will be proved in Section \ref{gluepf}.

\begin{lemma}[Gluing lemma for invasion]
\label{glue}Fix $x\in \mathbb{S}_{k}$. Take $i_{0}$ such that $x\in \bar{B}%
_{m_{i_{0}}}$. Then for any $i>i_{0}$, and any Borel measurable set $%
A\subset \left[ 0,1\right] ^{\bar{B}_{m_{i-1}}}$, there exist $%
C_{1},C_{2},C_{3}<\infty $, such that%
\begin{align}
\mathbb{P}\left[ \left( \mathcal{B}_{0}^{m_{i}}\right) ^{c},\mathcal{B}%
_{x}^{m_{i}},\mathcal{Y}_{A}^{i}\right]  &\leq C_{1}\mathbb{P}\left[
\mathcal{B}_{0}^{m_{i}},\mathcal{B}_{x}^{m_{i}},\mathcal{Y}_{A}^{i}\right] ,
\label{g1} \\
\mathbb{P}\left[ \mathcal{B}_{0}^{m_{i}},\left( \mathcal{B}%
_{x}^{m_{i}}\right) ^{c},\mathcal{Y}_{A}^{i}\right]  &\leq C_{2}\mathbb{P}%
\left[ \mathcal{B}_{0}^{m_{i}},\mathcal{B}_{x}^{m_{i}},\mathcal{Y}_{A}^{i}%
\right] ,  \label{g2} \\
\mathbb{P}\left[ \left( \mathcal{B}_{0}^{m_{i}}\right) ^{c},\left( \mathcal{B%
}_{x}^{m_{i}}\right) ^{c},\mathcal{Y}_{A}^{i}\right]  &\leq C_{3}\mathbb{P}%
\left[ \mathcal{B}_{0}^{m_{i}},\mathcal{B}_{x}^{m_{i}},\mathcal{Y}_{A}^{i}%
\right] .  \label{g3}
\end{align}%
As a consequence, there exist $c_{1},c_{2}>0$, such that
\begin{equation*}
\mathbb{P}\left[ \mathcal{B}_{0}^{m_{i}},\mathcal{B}_{x}^{m_{i}},\mathcal{Y}%
_{A}^{i}\right] \geq c_{2}\mathbb{P}\left[ \mathcal{Y}_{A}^{i}\right] \geq
c_{1}\mathbb{P}\left[ A\right] .
\end{equation*}
\end{lemma}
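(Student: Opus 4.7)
The plan is to adapt the local modification strategy from Section \ref{sec:gluing-lemma} (notably the proofs of Theorems \ref{GL} and \ref{cor:GL2}) to the invasion setting, and then combine \eqref{g1}--\eqref{g3} to obtain the consequence. The starting point is a sufficient condition for $\mathcal B_0^{m_i}$ on $\mathcal Y_A^i$: the closed dual surface in $D_{2n_i,m_i}$ confines every $p_c$-open cluster meeting $\bar B_{2n_i}$ strictly inside $\bar B_{m_i}$, so by the observation at the end of Section \ref{inv} (when $\mathcal C_{p_c}(v)$ is finite then $\mathcal C_{p_c}(v) \subset \mathcal I_v$), the existence of a $p_c$-open path from $0$ to $\Gamma_{\min}$ inside $\bar B_{m_i}$ implies $\mathcal B_0^{m_i}$, and analogously for $x$. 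Thus it suffices to produce, by local modifications, such $p_c$-open paths from $0$ and from $x$ to $\Gamma_{\min}$.

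To prove \eqref{g1}, for each $\omega\in(\mathcal B_0^{m_i})^c\cap\mathcal B_x^{m_i}\cap\mathcal Y_A^i$ I would construct a family $\{\omega^{(z)}:z\in U(\omega)\}$ of configurations in $\mathcal B_0^{m_i}\cap\mathcal B_x^{m_i}\cap\mathcal Y_A^i$. Here $U(\omega)$ is a set of candidate bridge vertices, and each $\omega^{(z)}$ is obtained by altering only the labels of edges in a ball $\overline{B_r(z)}$ located strictly inside $\bar B_{n_i}$ (so that $A$, $C_{n_i,2n_i}$, and $D_{2n_i,m_i}$ are all preserved) in such a way that a $p_c$-open bridge is opened between the $p_c$-open cluster of $0$ and $\Gamma_{\min}$, while the $p_c$-open path from $x$ to $\Gamma_{\min}$ witnessing $\mathcal B_x^{m_i}$ is left intact. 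Splitting according to whether $|U(\omega)|$ is small (treated by the analogue of Fact 1 from the proof of Theorem \ref{GL}) or large (treated by the analogue of Fact 2), an invasion-adapted version of Lemma \ref{combi0} yields \eqref{g1}. The proofs of \eqref{g2} and \eqref{g3} are entirely analogous: for \eqref{g2} one exchanges the roles of $0$ and $x$; for \eqref{g3} one performs the two bridge constructions for $0$ and $x$ successively in disjoint regions, as in the two-step construction for $\Gamma_1,\Gamma_2$ in the proof of Theorem \ref{cor:GL}.

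The consequence then follows immediately. Decomposing $\mathbb P[\mathcal Y_A^i]$ over the four values of $(\mathbf 1_{\mathcal B_0^{m_i}},\mathbf 1_{\mathcal B_x^{m_i}})$ and applying \eqref{g1}--\eqref{g3},
\[\mathbb P[\mathcal Y_A^i] \le (1+C_1+C_2+C_3)\,\mathbb P[\mathcal B_0^{m_i},\mathcal B_x^{m_i},\mathcal Y_A^i],\]
so the first inequality holds with $c_2 = (1+C_1+C_2+C_3)^{-1}$. For the second, the events $A$, $C_{n_i,2n_i}$ and $D_{2n_i,m_i}$ depend on pairwise disjoint sets of edge labels (since $m_{i-1}<n_i<2n_i<m_i$), hence are independent:
\[\mathbb P[\mathcal Y_A^i] = \mathbb P[A]\,\mathbb P[C_{n_i,2n_i}]\,\mathbb P[D_{2n_i,m_i}] \ge c_0(1-c_0/2)\,\mathbb P[A],\]
so $c_1 = c_2\, c_0(1-c_0/2)$.

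The main obstacle is the construction and size control of $U(\omega)$. Unlike in Theorem \ref{GL}, where the candidate set lies on a canonical minimal open path, here the invasion cluster is two-dimensional and admits no natural parameterization; the set $U(\omega)$ must instead consist of vertices $z$ inside $\bar B_{n_i}$ whose $r$-neighborhood simultaneously meets the $p_c$-open cluster of $0$ and a $p_c$-open arm reaching $\Gamma_{\min}$. Bounding $|U|$ from below in the "large" regime requires an independent geometric input, namely the box-crossing property of Theorem \ref{thm:BXP} applied inside $\bar A_{m_{i-1},n_i}$, together with Corollary \ref{cor}: these supply many disjoint $p_c$-open arms from the vicinity of $\bar B_{m_{i-1}}$ toward $\Gamma_{\min}$, each offering a potential bridge site. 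Additional care is required in \eqref{g3} to guarantee that the successive modifications for $0$ and $x$ do not destructively interact, which can be handled by a "unique crossing cluster" argument in the style of the proof of Theorem \ref{cor:GL}.
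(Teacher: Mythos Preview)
Your proposal overcomplicates the argument and misses the key simplification the paper exploits. You assert that ``the invasion cluster is two-dimensional and admits no natural parameterization,'' and therefore try to run the Fact~1/Fact~2 machinery from Theorem~\ref{GL} with a set $U(\omega)$ of candidate bridge points and a small/large dichotomy. But invasion \emph{does} have a canonical parameterization: time. The paper uses a \emph{single-valued} map $\Phi$, not a multi-valued one, and never splits on $|U|$. Concretely, let $R(\omega)$ be the component of $0$ in the complement of the $1$-neighborhood of $\overline{\Gamma_{\min}}$, and let $z(\omega)$ be the \emph{first} vertex of $\partial R$ reached by $\mathcal I_0$. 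Since $\omega\in(\mathcal B_0^{m_i})^c$, this $z$ is not in $\mathcal C_{p_c}(\Gamma_{\min})$; one opens a short bridge $\Gamma_z$ from $z$ to $\Gamma_{\min}$ inside a single column $\bar B_1^\#(z')$ (with $z'\in\Gamma_{\min}$ adjacent to $\bar z$) and closes the remaining edges there. Because the modification lies outside $R(\omega)$ and the invasion up to time $\tau_i$ uses only edges inside $R(\omega)$, the landing point $z$ is recoverable from $\omega'=\Phi(\omega)$; and the modification cannot create a new $p_c$-open circuit (else $z$ would already have been in $\mathcal C_{p_c}(\Gamma_{\min})$), so $\Gamma_{\min}(\omega')=\Gamma_{\min}(\omega)$. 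Lemma~\ref{combi} (the continuous-label analogue of Lemma~\ref{combi0}) then gives \eqref{g1} directly, with no box-crossing input and no lower bound on any $|U|$.

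Several specific points in your plan would fail or are misplaced. First, your $U(\omega)$ is built from the $p_c$-open cluster of $0$, but that cluster may be trivial; the invasion reaches the vicinity of $\Gamma_{\min}$ regardless, possibly through edges with labels above $p_c$, so the relevant object is $\mathcal I_0$, not $\mathcal C_{p_c}(0)$. Second, you place the modification ``strictly inside $\bar B_{n_i}$'' to protect $C_{n_i,2n_i}$, but the paper modifies in the annulus, right next to $\Gamma_{\min}$; preservation of $\Gamma_{\min}$ follows from the no-new-circuit argument above, not from spatial separation. Third, preserving $\mathcal B_x^{m_i}$ is not automatic: closing edges in $\bar B_1^\#(z')$ may sever the route by which $\mathcal I_x$ reached $\Gamma_{\min}$. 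The paper handles this in a Step~2, opening an additional bridge $\Gamma_w$ from the first point $w$ where $\mathcal I_x$ enters $\bar B_1^\#(z')$, when necessary. Finally, for \eqref{g3} the paper does not perform two modifications in disjoint regions; it simply maps $(\mathcal B_0^{m_i})^c\cap(\mathcal B_x^{m_i})^c$ into $\mathcal B_0^{m_i}\cap\bigl[\mathcal B_x^{m_i}\cup(\mathcal B_x^{m_i})^c\bigr]$ by the same single bridge at $z$ (skipping Step~2), and then invokes \eqref{g2}. Your derivation of the consequence from \eqref{g1}--\eqref{g3} is correct.
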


Assuming the lemma, we now complete the proof of Theorem \ref{MSF}. Denote $%
\mathcal{Z}^{i}=\mathcal{B}_{0}^{m_{i}}\cap \mathcal{B}_{x}^{m_{i}}\cap
C_{n_{i},2n_{i}}$. Since $\mathcal{Z}^{i-1}$ is measurable with respect to
the state of edges in $\bar{B}_{m_{i-1}}$, for all $i$ sufficiently large,
\begin{equation*}
\mathbb{P}\left[ \mathcal{Z}^{i}|\left( \mathcal{Z}^{i_{0}}\right)
^{c},\left( \mathcal{Z}^{i_{0}+1}\right) ^{c},...,\left( \mathcal{Z}%
^{i-1}\right) ^{c}\right] \geq c_{1}.
\end{equation*}%
It then follows by comparison to a sequence of i.i.d trials with success probability $c_{1}$ that
\begin{equation*}
\mathbb{P}\left( \mathcal{B}_{0}^{m_{i}},\mathcal{B}%
_{x}^{m_{i}},C_{n_{i},2n_{i}}\text{ i.o.}\right) =1.
\end{equation*}%
Finally, notice that since $\mathcal{B}_{0}^{m_{i}}\subset \left\{ \omega :\Gamma
_{\min }^{\left( i\right) }\left( \omega \right) \subset \mathcal{I}%
_{0}\left( \omega \right) \right\} $, $\mathcal{B}_{x}^{m_{i}}\subset
\left\{ \omega :\Gamma _{\min }^{\left( i\right) }\left( \omega \right)
\subset \mathcal{I}_{x}\left( \omega \right) \right\} $, we can conclude that%
\begin{equation*}
\mathbb{P}\left[ \Gamma _{\min }^{\left( i\right) }\subset \mathcal{I}%
_{0},\Gamma _{\min }^{\left( i\right) } \subset
\mathcal{I}_{x},C_{n_{i},2n_{i}}\text{ i.o.}\right] =1.
\end{equation*}%
In particular, $\mathcal{I}_{0}\cap \mathcal{I}_{x}\neq \emptyset $ a.s.

\subsection{Proof of Lemma \protect\ref{glue}\label{gluepf}}
In order to prove Lemma \ref{glue}, we start with the following extension of the combinatorial
Lemma 7 of \cite{DST}. It concerns maps $\Phi$ on the edge labels $\omega = \{\omega(e), e \in E \}$ such that
$\Phi$ decreases (respectively, increases) finitely many $\omega(e)$'s in an affine way in order to make those
edges open (respectively, closed).

\begin{lemma}
\label{combi}Consider $\mathcal{A},\mathcal{B}\subset \mathcal{F}$, $a,b \in (0,1)$, and a measurable map
$\Phi :\mathcal{A}\rightarrow \mathcal{B}$. If for any $\omega ^{\prime }\in
\Phi \left( \mathcal{A}\right) $, there exists $S\left( \omega ^{\prime
}\right) \subset E$ with less than or equal to $s$ edges, such that%
\begin{eqnarray*}
\Phi ^{-1}\left( \omega ^{\prime }\right)  &\subset &\left\{ \omega :\omega
|_{S^{c}}=\omega ^{\prime }|_{S^{c}}\right\} \cap  \\
&&\left[ \cup _{L\subset S}\left( \left\{ \omega :\omega |_{L}=\frac{1}{a}%
\omega ^{\prime }|_{L}\right\} \cap \left\{ \omega :\omega |_{S\backslash L}=%
\frac{\omega ^{\prime }-b}{1-b}|_{S\backslash L}\right\} \right) \right] ,
\end{eqnarray*}%
then $\mathbb{P}\left[ \mathcal{A}\right] \leq \left( \frac{2}{a\wedge
\left( 1-b\right) }\right) ^{s}\mathbb{P}\left[ \mathcal{B}\right] .$
\end{lemma}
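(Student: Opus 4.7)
The plan is to adapt the discrete change-of-measure argument of Lemma~\ref{combi0}, keeping track of the Jacobians introduced by the affine modifications.

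First, I would partition $\mathcal A$ by modification pattern. For $\omega\in\mathcal A$, setting $\omega'=\Phi(\omega)$, I would define
\begin{equation*}
L(\omega)=\{e\in E:\omega'(e)=a\,\omega(e)\},\qquad M(\omega)=\{e\in E:\omega'(e)=b+(1-b)\omega(e)\}.
\end{equation*}
By the hypothesis on $\Phi$, the pair $(L(\omega),M(\omega))$ consists of disjoint subsets of $S(\omega')$, so in particular $|L(\omega)|+|M(\omega)|\le s$. Since $a,b\in(0,1)$, the two affine formulas produce the same value only at $\omega(e)=0$ (a Lebesgue-null event), so outside a null set $(L(\omega),M(\omega))$ is uniquely determined by $\omega$. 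Writing $\mathcal A_{L,M}=\{\omega\in \mathcal A:L(\omega)=L,\,M(\omega)=M\}$, this yields $\mathcal A=\bigsqcup_{L,M} \mathcal A_{L,M}$ (up to a null set), where the index ranges over pairs of disjoint finite subsets of $E$ with $|L|+|M|\le s$.

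Next, I would apply the change of variables on each piece and then sum. On $\mathcal A_{L,M}$, the map $\Phi$ coincides with the affine injection sending $\omega(e)\mapsto a\,\omega(e)$ for $e\in L$, $\omega(e)\mapsto b+(1-b)\omega(e)$ for $e\in M$, and $\omega(e)\mapsto \omega(e)$ elsewhere, whose Jacobian is the constant $a^{|L|}(1-b)^{|M|}$. The change-of-variables formula therefore gives
\begin{equation*}
\PP(\mathcal A_{L,M}) = \frac{\PP(\Phi(\mathcal A_{L,M}))}{a^{|L|}(1-b)^{|M|}} \le \Bigl(\frac{1}{a\wedge(1-b)}\Bigr)^{s} \PP(\Phi(\mathcal A_{L,M})).
\end{equation*}
The images $\Phi(\mathcal A_{L,M})\subseteq \mathcal B$ may overlap, but for each fixed $\omega'\in \mathcal B$ the number of $(L,M)$ with $\omega'\in\Phi(\mathcal A_{L,M})$ is at most the number of subsets $L\subseteq S(\omega')$ realized by some preimage, hence at most $2^{|S(\omega')|}\le 2^s$. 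Fubini then yields $\sum_{(L,M)} \PP(\Phi(\mathcal A_{L,M})) \le 2^s \PP(\mathcal B)$, which combined with the Jacobian estimate gives the claimed bound $\PP(\mathcal A)\le(2/(a\wedge(1-b)))^s\PP(\mathcal B)$.

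The main obstacle will be the bookkeeping around the partition: one has to verify that $(L(\omega),M(\omega))$ is a measurable function of $\omega$, that the decomposition is a partition modulo a null set, and that the Fubini interchange over the countable family of admissible $(L,M)$ is valid despite $E$ being possibly infinite. Once the partition is properly set up, the Jacobian step and the $2^s$ multiplicity count are direct translations of their discrete analogues.
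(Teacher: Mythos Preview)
Your proposal is correct and follows essentially the same route as the paper: partition $\mathcal A$ according to the set of lowered edges (with the raised edges forming the complement in $S(\omega')$), bound the constant Jacobian on each piece from below by $(a\wedge(1-b))^s$, and use the $2^s$ bound on the number of subsets of $S(\omega')$ to control the overlap when summing. The paper's write-up is terser---it partitions directly into at most $2^s$ pieces on which $\Phi$ is a bijection without the explicit $(L,M)$ indexing---but the argument is the same.
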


Roughly speaking, this lemma says that if one can obtain $\mathcal{B}$ by modifying a small
number of edges in $\mathcal{A}$, in a way that given any element in $%
\mathcal{B}$, the number of its pre-images is bounded, then $\mathbb{P}\left[
\mathcal{A}\right] $ can be bounded from above by a constant times $\mathbb{P}\left[ \mathcal{%
B}\right] $.

\noindent\textit{Remarks.}
\begin{enumerate}[1.]
\item An equivalent way of stating the hypotheses on $\Phi$ is that $\Phi$ leaves all but at most $s$ of the
$\omega(e)$'s unchanged, with the others either lowered (by $\omega(e) \mapsto a\omega(e)$), or raised
(by  $\omega(e) \mapsto b+ (1-b)\omega(e)$) and the set $S$ of changed edges is uniquely determined by
$\omega^{\prime} = \Phi(\omega)$.
\item In Lemma \ref{combi}, all the edges in $S$ have their edge labels either decreased or increased. Although
it is not needed in this paper, we note that the lemma can be extended to allow for some of the edges in $S$ to be
unchanged.
\end{enumerate}

\begin{proof}[Proof of Lemma \ref{combi}]
First observe that for any $\omega ^{\prime }\in \Phi \left( \mathcal{A}\right) $%
, Card$\left( \Phi ^{-1}\left( \omega ^{\prime }\right) \right) \leq
2^{\left\vert S\left( \omega ^{\prime }\right) \right\vert }\leq 2^{s}$.
Therefore one can take a disjoint partition $\left\{ A_{i}\right\}
_{i=1}^{2^{s}}$ (some of which may be empty) of $\mathcal{A}$, such that $%
\Phi |_{A_{i}}$ is a bijection. Indeed, there is an $L_{i}(\omega)$ for $\omega \in A_{i}$, such that %
\begin{equation*}
\Phi |_{A_{i}}\left( \omega \right) \left( e\right) =\left\{
\begin{array}{cc}
\omega \left( e\right)  & \text{if }e\notin S\left( \Phi \left( \omega
\right) \right)  \\
b+\left( 1-b\right) \omega \left( e\right)  & e\in S\left( \Phi \left(
\omega \right) \right) \setminus L_{i} \\
a\omega \left( e\right)  & e\in L_{i}%
\end{array}%
\right. .
\end{equation*}%
Then one can bound its Jacobian $J_{i}(\omega)$ from below by%
\begin{equation*}
J_{i}(\omega) \geq a^{\text{Card}\left( L_{i}\right)
}\left( 1-b\right) ^{\text{Card}(S\left( \Phi \left( \omega \right) \right)
\setminus L_{i})}\geq \left( a\wedge \left( 1-b\right) \right) ^{s}.
\end{equation*}%
Therefore,%
\begin{eqnarray*}
\mathbb{P}[\mathcal{B]} &\geq &\int_{\Phi \left( A_{i}\right) }d\omega
^{\prime }=\int_{A_{i}}J_{i}(\omega)  d\omega  \\
&\geq &\int_{A_{i}}\left( a\wedge \left( 1-b\right) \right) ^{s}d\omega
=\left( a\wedge \left( 1-b\right) \right) ^{s}\mathbb{P[}A_{i}].
\end{eqnarray*}%
Summing over $i$, we obtain
\begin{equation*}
2^{s}\mathbb{P}[\mathcal{B]}\geq \left( a\wedge \left( 1-b\right) \right)
^{s}\mathbb{P}[\mathcal{A]}.
\end{equation*}
\end{proof}

\begin{proof}[Proof of Lemma \ref{glue}]
We now prove \eqref{g1} by explicitly constructing a map $\Phi :\left(
\mathcal{B}_{0}^{m_{i}}\right) ^{c}\cap \mathcal{B}_{x}^{m_{i}}\cap \mathcal{%
Y}_{A}^{i}\rightarrow \mathcal{B}_{0}^{m_{i}}\cap \mathcal{B}%
_{x}^{m_{i}}\cap \mathcal{Y}_{A}^{i}$ that satisfies the hypothesis of Lemma %
\ref{combi}. The proof of \eqref{g2} follows from the same argument, and the proof of \eqref{g3} will be described at the end of this proof. Given $%
\omega \in \left( \mathcal{B}_{0}^{m_{i}}\right) ^{c}\cap \mathcal{B}%
_{x}^{m_{i}}\cap C_{n_{i},2n_{i}}\cap D_{2n_{i},m_{i}}$ with $i>i_{0}$, let $%
R\left( \omega \right) $ denote the connected component containing $0$ in $%
\left\{ w\in \mathbb{S}_{k}\text{: dist}\left( \bar{w},\Gamma _{\min }\left(
\omega \right) \right) \ge 1\right\} $. In particular, $\partial R \subset \left\{ w\in \mathbb{S}_{k}\text{: dist}\left( \bar{w},\Gamma _{\min }\left(
\omega \right) \right) =1\right\}$. Let $\tau _{i}=\min \left\{ j:\mathcal{I%
}_{0}\left[ j\right] \in \partial R\right\} $ and $z(\omega)=\mathcal{I}_{0}\left[
\tau _{i}\right] $ be the first landing point of $\mathcal{I}_{0}$ on $%
\partial R$. By definition, there exists $z^{\prime }\in \Gamma _{\min }$
such that dist$\left( \bar{z},\bar{z^{\prime }}\right) =1$. If there exists
more than one $z^{\prime }$ satisfy dist$\left( \bar{z},\bar{z^{\prime }}\right)
=1$, choose the minimal one.

Recall that $\mathcal{C}_{p_{c}}\left( \Gamma _{\min }\left( \omega \right)
\right) $ denotes the $p_{c}$-open cluster containing $\Gamma _{\min }\left(
\omega \right) $. Notice that $\omega \in \left( \mathcal{B}%
_{0}^{m_{i}}\right) ^{c}$ implies

\begin{equation}
z\left( \omega \right) \notin \mathcal{C}_{p_{c}}\left( \Gamma _{\min
}\left( \omega \right) \right) .  \label{f1}
\end{equation}%
Otherwise, by the observation in Section \ref{inv}, we would have $\Gamma _{\min }\left(
\omega \right) \subset \mathcal{I}_{0}^{m_{i}}\left( \omega \right) $.
To complete the proof, we will use $B_{1}^{\#}(z)$ for $ z\in \mathbb{S}_{k}$ to denote\\
$ z+\{(0,0,0),(1,0,0),(-1,0,0),(0,1,0),(0,-1,0)\}$; as usual, $\bar{B}_{1}^{\#}\left( z\right)$
denotes the cylinder in $\mathbb{S}_k$ generated by the five-point set $B_{1}^{\#}(z)$.
There exists a self avoiding path $\Gamma _{z}$ in $\bar{B}%
_{1}^{\#}\left( z^{\prime }\right) $ connecting $z\left( \omega \right) $ to $%
\Gamma _{\min }\left( \omega \right)$ without touching any other vertices
in $\Gamma _{\min }\left( \omega \right)
\cup \partial \bar{B}_{1}^{\#}\left( z^{\prime }\right) $. In particular one can construct $\Gamma _{z}$ by taking
one edge from $z$ to $\bar{z^{\prime }}$ and then move in $\bar{z^{\prime }}$ until reaching the first vertex in $\Gamma _{\min }$.

We now construct $\omega ^{\prime }=\Phi \left( \omega \right) $ as follows.

\begin{enumerate}

\item Open all
the edges in $\Gamma _{z}$ (that is, take $\omega \left( e\right) \mapsto
p_{c}\omega \left( e\right) $).

\item If $\mathcal{I}_{x}^{m_{i}}$ touches $\bar{B}_{1}^{\#}\left( z^{\prime }\right) $,
then proceed as follows; otherwise go to Step 3. Define $\tau _{i}^{x}=\min \left\{ j:\mathcal{I}_{x}\left[
j\right] \in \partial \bar{B}_{1}^{\#}\left( z^{\prime }\right) \right\} $ and $w=%
\mathcal{I}_{x}\left[ \tau _{i}\right] $ to be the first vertex in $\partial
\bar{B}_{1}^{\#}\left( z^{\prime }\right) $ reached by the invasion cluster
starting from $x$. If $w\in \mathcal{C}_{p_{c}}\left( z\right) $, go to Step 3. If $w\notin \mathcal{C}_{p_{c}}\left( z\right) $, then there
exists a self avoiding path $\Gamma _{w}$ in $\bar{B}_{1}^{\#}\left( z^{\prime
}\right) $ connecting $w\left( \omega \right) $ to $ \Gamma _{\min }\left( \omega \right) \cup \Gamma _{z}$ without touching any other
vertices in $ \Gamma _{\min }\left( \omega \right)
 \cup \Gamma _{z}\cup \partial \bar{B}_{1}^{\#}\left( z^{\prime }\right) $%
. Open all the edges in $\Gamma _{w}$.

\item Close all the edges in $\bar{B}_{1}^{\#}\left( z^{\prime }\right) $ (that
is, map $\omega \left( e\right) \mapsto b+\left( 1-b\right) \omega \left(
e\right) $ with $b>p_{c}$) except for the edges of $\Gamma _{\min }\left( \omega \right) \cup \Gamma _{z} \cup \Gamma _{w} $.
\end{enumerate}

By construction, $\omega ^{\prime }\in \mathcal{B}_{0}^{m_{i}}\cap \mathcal{B%
}_{x}^{m_{i}}\cap \mathcal{Y}_{A}^{i}$. To see this, note that when $%
D_{2n_{i},m_{i}}$ occurs, $\mathcal{I}_{0}^{m_{i}}$ (or $\mathcal{I}%
_{x}^{m_{i}}$) touching any vertex $v\in \bar{B}_{2n_{i}}\left( 0\right) $
implies it also contains all of $\mathcal{C}_{p_{c}}\left( v\right) $. To apply Lemma \ref%
{combi}, we need to bound the number of pre-images of $\omega ^{\prime }$ by $2^{s}$ for some $s$.
For this, we first note the important feature that
\begin{equation}
\Gamma _{\min }\left( \omega ^{\prime }\right) =\Gamma _{\min }\left( \omega
\right) .  \label{f2}
\end{equation}
This will help show that the set $S$ of changed edges is uniquely determined by $\omega ^{\prime }$.

Indeed, the construction will not create any new $p_{c}$-open circuits. If
the construction skips Step 2, then any new $p_{c}$-open circuit would contain a
subset of $\Gamma _{z}$, and then it would contain all of $\Gamma _{z}$ because of
Step 3. Therefore if the construction created some new $p_{c}$-open circuit,
we would have $z\left( \omega \right) \in \mathcal{C}_{p_{c}}\left( \Gamma
_{\min }\left( \omega \right) \right) $, which would contradict \eqref{f1}. If
the construction uses Step 2, by the same argument, we would have either $%
z\left( \omega \right) \in \mathcal{C}_{p_{c}}\left( \Gamma _{\min }\left(
\omega \right) \right) $, or $w\left( \omega \right) \in \mathcal{C}%
_{p_{c}}\left( \Gamma _{\min }\left( \omega \right) \right) $, or $w\left(
\omega \right) \in \mathcal{C}_{p_{c}}\left( z\left( \omega \right) \right) $%
, any of which would lead to a contradiction.

Now, given $\omega ^{\prime } = \Phi(\omega)$, one can determine $S(\omega)= S(\omega ^{\prime }) $ as follows.

\begin{enumerate}
\item Thanks to \eqref{f2}, $R(\omega ^{\prime })=R\left( \omega \right) $,
and $\omega |_{R\left( \omega \right) }=\omega ^{\prime }|_{R\left( \omega
^{\prime }\right) }$ (where $\omega |_{R}$ here means the set of edge labels with both vertices in $R$). This implies $z\left( \omega \right) =z\left( \omega%
^{\prime }\right) $. Therefore one can explore $\mathcal{I}%
_{0}^{m_{i}}\left( \omega ^{\prime }\right) $ until it contains $z$,
without any change from $\mathcal{I}%
_{0}^{m_{i}}\left( \omega \right) $.

\item $z^{\prime }\left( \omega ^{\prime }\right) =z^{\prime }\left( \omega
\right) $. In fact, $z^{\prime }\in \Gamma _{\min }\left( \omega ^{\prime
}\right) =\Gamma _{\min }\left( \omega \right) $ is uniquely characterized
by dist$\left( \bar{z},\bar{z^{\prime }}\right) =1$ and the minimality of
 $z^{\prime }$.

\item Taking $S\left( \omega ^{\prime }\right) =\bar{B}_{1}^{\#}\left( z^{\prime
}\right) \setminus \Gamma_{\min}(\omega ^{\prime })$, we see that $\omega |_{S^{c}}=\omega ^{\prime }|_{S^{c}}$, and
that the map $\Phi $ satisfies the conditions of Lemma \ref{combi} with $s$ equal
to the number of edges in $\bar{B}_{1}^{\#}$. Applying Lemma \ref{combi} we
obtain that
\begin{eqnarray*}
&&\mathbb{P}\left[ \left( \mathcal{B}_{0}^{m_{i}}\right) ^{c},\mathcal{B}%
_{x}^{m_{i}},\mathcal{Y}_{A}^{i}\right]  \\
&\leq &\left( \frac{2}{p_{c}\wedge \left( 1-b\right) }\right) ^{s}\mathbb{P}%
\left[ \mathcal{B}_{0}^{m_{i}},\mathcal{B}_{x}^{m_{i}},\mathcal{Y}_{A}^{i}%
\right] ,
\end{eqnarray*}%
which concludes the proof of \eqref{g1} (and similarly \eqref{g2}).
\end{enumerate}

Finally, to prove \eqref{g3}, we note that a map
\begin{equation*}
\Phi :\left( \mathcal{B}_{0}^{m_{i}}\right) ^{c}\cap \left( \mathcal{B}%
_{x}^{m_{i}}\right) ^{c}\cap \mathcal{Y}_{A}^{i}\rightarrow \left( \mathcal{B%
}_{0}^{m_{i}}\cap \mathcal{B}_{x}^{m_{i}}\cap \mathcal{Y}_{A}^{i}\right)
\cup \left( \mathcal{B}_{0}^{m_{i}}\cap \left( \mathcal{B}%
_{x}^{m_{i}}\right) ^{c}\cap \mathcal{Y}_{A}^{i}\right)
\end{equation*}%
can be constructed in essentially the same way as above (in fact, one can skip Step 2 when constructing $\omega ^{\prime }$). Lemma \ref{combi} then
implies%
\begin{equation*}
\mathbb{P}\left[ \left( \mathcal{B}_{0}^{m_{i}}\right) ^{c},\left( \mathcal{B%
}_{x}^{m_{i}}\right) ^{c},\mathcal{Y}_{A}^{i}\right] \leq C_{4}\mathbb{P}%
\left[ \mathcal{B}_{0}^{m_{i}},\mathcal{B}_{x}^{m_{i}},\mathcal{Y}_{A}^{i}%
\right] +C_{4}\mathbb{P}\left[ \mathcal{B}_{0}^{m_{i}},\left( \mathcal{B}%
_{x}^{m_{i}}\right) ^{c},\mathcal{Y}_{A}^{i}\right] ,
\end{equation*}%
for some $C_{4}<\infty $. Together with \eqref{g2} we obtain \eqref{g3} with $C_{3}= C_{4}(1+C_{2})$.
\end{proof}

\bigskip
\paragraph{\textbf{Acknowledgments:}} We thank Artem Sapozhnikov for very helpful
comments on earlier versions of the paper and Vladas Sidoravicius for useful
comments. The research of C.M.N. and W.W. was supported in part by U.S. NSF
grants DMS-1007524 and DMS-1507019. The research of V.T. was supported by the
Swiss NSF.

\bibliographystyle{alpha}
\bibliography{MSFref}

\end{document}